\documentclass{tac}

\usepackage{amsmath}

\title{Controlled theories, categorification, and homotopification}

\address{Unaffiliated\\
 Chicago, Illinois, USA}

\author{Johnathon Taylor}

\usepackage{tikz}
\usepackage{tikz-cd}

\newtheoremrm{construction}{Construction}

\begin{document}

\maketitle

\begin{abstract}
In this paper, we introduce the notion of a \emph{controlled theory}, originally
developed in the author's thesis, as a structural tool for
the study of higher categorical algebra. We define a notion of deformation for
pros and controlled theories in a cartesian closed category. Furthermore, we
show that deformations of controlled theories naturally produce Lawvere
theories enriched over the same base category. We construct functorial
one-dimensional categorifications and homotopifications of controlled theories,
yielding Lawvere $2$-theories and Lawvere theories enriched in simplicial sets,
respectively. As an application, we obtain a new model for $\infty$-groups and
construct a model of coherent group-like $E_\infty$-spaces.
\end{abstract}

\copyrightyear{2026}

\keywords{enrichment, pros, Lawvere theories, categorification}
\amsclass{18D20,18M05,18N10,18N25, 18N50}

\eaddress{jt3theend17@gmail.com}

\tableofcontents

\section{Introduction}
Given a symmetric monoidal category $\mathcal{V}$ together with a category $C$ and an inclusion of categories $C\hookrightarrow\mathcal{V}$ that identifies the objects of $C$ as objects of $\mathcal{V}$ with additional structure data, we ask a natural question: is there some notion of algebraic theory whose category of algebras in $\mathcal{V}$ is equivalent to $C$? In the case we work in the category of sets, Lawvere theories introduced in \cite{Law2} are known to be sufficient to capture all cases where the structure data is finitary. Furthermore, every Lawvere theory induces a monadic adjunction between the category of sets and the category of algebras. 

Lawvere theories have a couple more properties that make them pleasant to work with in the category of sets. Every Lawvere theory admits a presentation by generators and relations given by the counit of an adjunction between reduced signatures and Lawvere theories (see Definition \ref{def_of_present} and Lemma \ref{Law_th_are_presentable}). Often times, we may minimize the number of generators used in a presentation to minimize the data. Moreover, given a span of Lawvere theories
\[
L_1\xleftarrow{f}L_0\xrightarrow{g}L_2
\]
(or any connected diagram of Lawvere theories), we have a good grip on how to construct the colimit by utilizing presentations of generators and relations. 

When we move to a non-set based setting, our story becomes more complicated. Lawvere theories are no longer sufficient, even up to homotopy. While it is not explicitly written anywhere from our knowledge, it is well-known that there is no Lawvere theory whose homotopy algebras of \cite{bern1} in spaces (where spaces means simplicial sets or topological spaces) model the $E_\infty$-spaces of \cite{May2}. This problem extends to trying to capture symmetric monoidal categories as homotopy algebras over a Lawvere theory in the category of small categories. The level of difficulty lies in the way that the symmetry isomorphisms interacts with the diagonal maps. In a general symmetric monoidal category $(C,\otimes, I,\alpha,\lambda,\rho,\beta)$, 
\[
\beta_{x,x}\neq 1_{x\otimes x}.
\]
In the setting of homotopy algebras, this axiom is forced to hold up to a higher cell.

Enriched Lawvere theories of \cite{Power1999} are used to circumvent this issue. Classically, the first write up for this idea originates from \cite{Gray1974} and was applied to homotopical methods in \cite{SchwanzlVogt1989}. We focud in on the work of Schwanz and Vogt.  Schwanz and Vogt used this framework to prove that when an $E_\infty$-space is equipped with a notion of homotopy coherent inverse, as introduced by their paper, then it is equivalent to a topological abelian group. Therefore the notion of homotopy coherent inverses, as discussed by Schwanz and Vogt is insufficient in capturing $E_\infty$-spaces. At first glance, we might assume that no notion of homotopy coherent inverses could exist.  Upon further examination, Schwanz and Vogt make the assumption that the preimage of the $k$-fold multiplication operations and the inverse operation are contractible. As they point out, this condition is too strict.

We are inspired from the framework of Lawvere $2$-theories (see \cite{Perutka2026} and \cite{yan1}). In this setting, there is a span of Lawvere $2$-theories
\[
L_{gp}\xleftarrow{i}L_{mon}\xrightarrow{j}L_{ab},
\]
whose algebras in the category of groupoids are coherent $2$-groups, monoidal groupoids, and symmetric monoidal groupoids, respectively. The maps $i$ and $j$ induce the forgetful functors upon taking algebras. Moreover, the pushout produces a Lawvere $2$-theory $L_{pic}$ whose algebras in the category of groupoids are coherent Picard groupoids. The $1$-truncation of the sphere spectrum, i.e., the free skeletal and permutative Picard groupoid on one object, is known to have hom-group on the generating object given by $\mathbf{Z}/2$. On the level of Lawvere $2$-theories, this says there is a non-trivial $2$-cell in $L_{pic}$ between $\text{id}_1$ and itself. Therefore, the contractibility condition of \cite{SchwanzlVogt1989} will not be met when we translate to simplicial sets or topological spaces via the nerve or classifying space functor.

We begin in Section 2 by detailing common notation that we use throughout the paper. We move on to provide background on algebraic theories needed for this paper. We make use of signatures, pros, and Lawvere theories. To conclude, we provide background for the theory of model structures on categories. 

In Section 3, we provide background on results arising in enriched category theory. A standard reference for the subject matter is Kelly in \cite{Kelly3}. We take much of that section straight from \cite{Kelly3} and provide it for completion purposes. We recommend anybody familiar with enriched category theory to skip Section 3.
For those unfamiliar or rusty, Section 3 provides an outline on what enriched category theory is and fundamental tools of enriched category theory used throughout the literature such as change-of-basis functors, copowers, the enriched tensor product, and the enriched hom functor. 

In Section~4, we first introduce controlled theories and their basic properties. Controlled theories and connected diagrams of controlled theories were originally introduced by us in \cite{Taylor2026}. There, we defined a functor assigning to each connected diagram of controlled theories a limit sketch that syntactically weakens the axioms of the associated Lawvere theory according to the specified control pro. We then define the notion of an admissible controlled theory, a distinguished class of controlled theories whose data is determined entirely by a pro that faithfully embeds into the initial Lawvere theory, a reduced signature, and a congruence relation on the free Lawvere theory generated by that signature.  We show that every admissible controlled theory functorially determines a controlled theory and that this construction defines a right adjoint.

 In Section 5, we discuss facets of enriched categorical algebra. We begin by defining enriched signatures and enriched pros, which was used by Bressie in \cite{Bressie2020} in his work to provide categorical algebraic variants of globular operads of \cite{Lein}. We go on to provide background on enriched Lawvere theories. Enriched pros and Lawvere theories serve as the backbone of what follows. We define a notion of deformation of a pro in a locally presentable cartesian closed category and show how a deformation functorially induces an enriched pro. We finish the section by defining a notion of deformation of a controlled theory and show how a deformation functorially induces an enriched Lawvere theory.
 We were inspired to define the notion of deformation of a pro and controlled theory by the augmentations that May used to define $E_\infty$-spaces in \cite{May2} and the work of Schwanz and Vogt in \cite{SchwanzlVogt1989}.

In the sixth section, we construct and study the one-dimensional
categorification and homotopification of controlled theories. We begin by
introducing lax, pseudo, and strict augmentations of a controlled theory in a
cartesian closed model category. We then define the notion of an
$\Omega^*$-free deformation of a controlled theory $\Omega$, which serves as an analogue of $\Sigma$-free operads in the setting of
deformations over a controlled theory.

We next construct a functor
\[
\mathcal{AR}_{(-)}:\mathbf{cTh}\longrightarrow 2\mathbf{Law},
\]
called the \emph{algebraic realization functor}. This functor assigns to a
controlled theory $\Omega$ a Lawvere $2$-theory obtained by syntactically
weakening the axioms of $\Omega$ at the locations prescribed by its control
pro. The main result of this construction is the following theorem.

Theorem \ref{proof_of_freeness_of_alg_aug}.  Let
\[
\Omega:=\langle \mathcal{G}:P\xrightarrow{i}\operatorname{Fr}(\mathcal{G})
\xrightarrow{\mathbf{st}}L\rangle
\]
be a controlled theory. The algebraic augmentation of $\Omega$ is an
$\Omega^*$-free strong augmentation of $\Omega$.

We extend the domain of the algebraic realization functor to the category of
connected diagrams of controlled theories.

Finally, we modify the construction of the algebraic realization functor by
applying the nerve functor to obtain the \emph{nerve realization functor}. We
show that there exist controlled theories whose associated categories of
algebras recover both $A_\infty$-spaces and $E_\infty$-spaces. As an
application, we construct a new model category $\mathcal{A}^{gl}_{\infty}\text{-Spaces}$ and prove the following theorem.

\emph{Theorem \ref{theorem_sixpointtwenty}. The category $\mathcal{A}^{gl}_{\infty}\text{-Spaces}$,
equipped with the projective model structure, is a model for $\infty$-groups.}

Theorem \ref{theorem_sixpointtwenty} establishes that
$\mathcal{A}^{gl}_{\infty}\text{-Spaces}$ provides a model for
$\infty$-groups. Moreover, we construct a new model category $\mathcal{E}^{gl}_{\infty}\text{-Spaces}$
defined by the pullback of the diagram
\[
\mathcal{A}^{gl}_{\infty}\text{-Spaces}
\xrightarrow{U}
A_{\infty}\text{-Spaces}
\xleftarrow{U}
E_{\infty}\text{-Spaces}.
\]
 We conclude by proving the following proposition, which demonstrates
that the strictification morphism considered below fails to be a weak
equivalence.

\emph{Theorem \ref{not_we_eq}. The morphism of theories
\[
\mathbf{st}:\mathcal{NR}_{\Omega_{pic}}\longrightarrow D(\mathcal{AB})
\]
is not a weak equivalence in $\mathbf{sSTh}$.}

The motivation for this paper is twofold. First, we aim to introduce
controlled theories to the broader mathematical community. The applications of
controlled theories within our research program continue to expand, and their
structural role is essential for constructing the appropriate higher
categorical models in our research. Second, we construct a theory that is not weakly equivalent
to the theory of simplicial abelian groups, while whose algebras form a
subcategory of $E_\infty$-spaces equipped with coherent
invertibility data.

\subsection*{Relationship to other work}
In the case of operads, the thesis work of Gould in \cite{Gould2008} provides a functorial categorification of the data of an operad. Our work here generalizes to more a general class of algebraic theories. In particular, we provide a methodology that allows us to obtain coherent $2$-groups as a categorification of a particular theory.

In the future, we will show how to obtain a modification of the work of Gould with our machinery. An operad will functorially produce a controlled theory. When we take the symmetric operad for commutative monoids and apply the algebraic realization to the controlled theory produced Gould's construction produces biased, we obtain a Lawvere $2$-theory whose algebras are unbiased symmetric monoidal categories. Gould's construction produces an operad whose algebras are biased symmetric monoidal categories. 

\subsection*{Special thanks}
We would like to thank Nathanael Arkor. Nathanael Arkor reached out and gave some editing advice and gave some references to results that we were not familiar with. Another special thanks goes to Tony Elmendorf who gave us the idea for this paper to begin with. While we were at dinner with Tony, he said he wondered if homological group-completion can be made monadic in a more point-set sense. While we believe the answer to that question is no, the question did ultimately lead to the formation of this paper. 

\subsection{Remarks about edited version}
This document is an edited version of work previously submitted on ArXiv. We have reworked it to be more formal and less ambitious about our future results. We still will put out those promised results; however, those types of statements are too informal for a published piece of math literature. 
 
\section{Notation and background}

\subsection*{Common notation}

We fix the following notation for the remainder of this paper.

\begin{itemize}
    \item $*$ denotes the terminal object of a category, whenever it exists.
    \item $\mathbf{N}$ is the natural numbers, including $0$.
    \item $\Delta$ denotes the category of finite ordinals.
    \item $\mathbf{Set}$ denotes the category of sets.
    \item $\mathbf{Mon}$ denotes the category of monoids.
    \item $\mathbf{cMon}$ denotes the category of commutative monoids.
    \item $\mathbf{Grp}$ denotes the category of groups.
    \item $\mathbf{sSet}$ denotes the category of simplicial sets.
    \item $\mathbf{sMon}$ denotes the category of simplicial monoids.
    \item $\mathbf{sGrp}$ denotes the category of simplicial groups.
    \item $\mathbf{Cat}$ denotes the category of small categories and functors.
    \item $\mathbf{ob}:\mathbf{Cat}\to\mathbf{Set}$ denotes the functor sending a category to its set of objects.
    \item $E:\mathbf{Set}\to\mathbf{Cat}$ denotes the right adjoint to the object functor $\mathbf{ob}$.
    \item $B:\mathbf{Grp}\to\mathbf{Cat}$ denotes the functor sending a group to the corresponding one-object groupoid.
    \item $\mathbf{Top}$ denotes the category of compactly generated weakly Hausdorff spaces.
    \item $|-|:\mathbf{sSet}\to\mathbf{Top}$ denotes the geometric realization functor.
    \item $S:\mathbf{Top}\to\mathbf{sSet}$ denotes the singular simplices functor.
    \item $N:\mathbf{Cat}\to\mathbf{sSet}$ denotes the nerve functor.
    \item $\mathbf{CAT}$ denotes a large category of categories bounded by a sufficiently large regular cardinal
\end{itemize}


\subsection*{Algebraic theories}

We now outline some background on set-based algebraic theories used throughout this paper. We begin by defining signatures and reduced signatures.

\begin{definition}
A \emph{signature} is a functor
\[
S:\mathbf{N}^{2}\to \mathbf{Set}.
\]
\end{definition}

The category of signatures, denoted by $\mathbf{Sig}$, is the functor category
\[
[\mathbf{N}^{2},\mathbf{Set}].
\]
Our notion of signature agrees with the notion of a prop signature given in Definition 5.25 of \cite{FongSpivak2019}. To see this, let $G$ be a set equipped with functions
\[
\begin{tikzcd}
G
  \arrow[r, shift left=0.6ex, "s"]
  \arrow[r, shift right=0.6ex, swap, "t"]
&
\mathbf{N}
\end{tikzcd}
\]

Define a functor $G':\mathbf{N}^{2}\to\mathbf{Set}$ by setting
\[
G'_{n,m}=\{g\in G:s(g)=n,\ t(g)=m\}
\]
for all $n,m\geq 0$. Conversely, given a signature
\[
c:\mathbf{N}^{2}\to\mathbf{Set},
\]
we may define a set
\[
c'=\coprod_{n,m\geq 0}c_{n,m}
\]
together with functions
\[
\begin{tikzcd}
c'
  \arrow[r, shift left=0.6ex, "s"]
  \arrow[r, shift right=0.6ex, swap, "t"]
&
\mathbf{N}
\end{tikzcd}
\]
induced by the indexing of the coproduct.

A signature $S:\mathbf{N}^{2}\to\mathbf{Set}$ is \emph{reduced} if
\[
S(n,k)=\emptyset
\]
whenever $k\neq 1$. We denote by $\mathbf{rSig}$ the full subcategory of $\mathbf{Sig}$ consisting of reduced signatures. We immediately observe that
\[
\mathbf{rSig}\simeq [\mathbf{N},\mathbf{Set}].
\]

\begin{lemma}
The categories $\mathbf{Sig}$ and $\mathbf{rSig}$ are locally presentable.
\end{lemma}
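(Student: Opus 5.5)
Both categories are presheaf categories on small categories, and presheaf categories are locally presentable. Concretely, $\mathbf{N}^{2}$ is regarded as a discrete small category, so $\mathbf{Sig}=[\mathbf{N}^{2},\mathbf{Set}]$ is a functor category from a small category into $\mathbf{Set}$. Since $\mathbf{N}^2$ is discrete, this is the same as $[(\mathbf{N}^2)^{op},\mathbf{Set}]$, a presheaf category. Likewise the observation $\mathbf{rSig}\simeq[\mathbf{N},\mathbf{Set}]$ with $\mathbf{N}$ discrete exhibits $\mathbf{rSig}$ as a presheaf category. Local presentability is invariant under equivalence of categories, since it is defined via colimits and presentable objects. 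So it suffices to recall why $[\mathcal{A},\mathbf{Set}]$ is locally (finitely) presentable for small $\mathcal{A}$.

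For that, I would cite the standard fact (e.g.\ Ad\'amek--Rosick\'y, Example 1.12) or argue directly in three steps. First, $[\mathcal{A},\mathbf{Set}]$ is cocomplete, since colimits are computed pointwise. Second, the representables $\mathcal{A}(a,-)$ are finitely presentable. By Yoneda, $\mathrm{Hom}(\mathcal{A}(a,-),-)$ is evaluation at $a$, and evaluation commutes with filtered colimits because these are computed pointwise. Third, every functor is a colimit of representables via the density (co-Yoneda) formula. Hence the small set of representables is a strong generator of finitely presentable objects, and every object is a filtered colimit of finite colimits of representables, which are finitely presentable. This gives local finite presentability.

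In our discrete case the argument is especially transparent. An object of $\mathbf{Sig}$ is an $\mathbf{N}^2$-indexed family of sets $(S_{n,m})$. The representables are the singleton families $y_{(n,m)}$, with $y_{(n,m)}$ equal to $*$ at $(n,m)$ and empty elsewhere. Every signature is the coproduct $\coprod_{(n,m)}\coprod_{g\in S_{n,m}} y_{(n,m)}$. Thus $\mathbf{Sig}\simeq\mathbf{Set}^{\mathbf{N}^2}$, a product of copies of $\mathbf{Set}$, and the finitely presentable objects are exactly the families with finite total support.

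No step is a genuine obstacle. The only point needing care is to state explicitly that $\mathbf{N}$ and $\mathbf{N}^2$ are being viewed as discrete categories (as the paper's definition implicitly does), so that the functor categories are small-indexed presheaf categories. Under the alternative reading of $\mathbf{N}$ as the poset $(\mathbf{N},\le)$, the same argument still applies verbatim, since it only uses smallness of the indexing category.
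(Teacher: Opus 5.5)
Your proof is correct, but it takes a more hands-on route than the paper. The paper's proof is a one-line citation: it regards $\mathbf{Sig}$ and $\mathbf{rSig}$ as categories of models of a limit sketch and applies Corollary~1.52 of Ad\'amek--Rosick\'y.

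You instead identify both as functor categories $[\mathcal{A},\mathbf{Set}]$ with $\mathcal{A}$ small and discrete. You then verify local finite presentability directly from Yoneda, pointwise filtered colimits, and density. The two routes are closely related, since $[\mathcal{A},\mathbf{Set}]$ is exactly the category of models of the sketch on $\mathcal{A}$ with no distinguished cones.

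Your version has three advantages. It is self-contained. It gives the sharper conclusion of local \emph{finite} presentability, with an explicit description of the finitely presentable objects. It also treats $\mathbf{rSig}$ more carefully. The condition $S(n,k)=\emptyset$ is not a limit condition: $\mathbf{rSig}$ is not even closed under limits in $\mathbf{Sig}$, since the terminal signature is a singleton at every $(n,k)$. So the sketch theorem reaches $\mathbf{rSig}$ naturally only after passing through $\mathbf{rSig}\simeq[\mathbf{N},\mathbf{Set}]$ and invariance under equivalence. You make that step explicit; the paper leaves it implicit.

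What the paper's route buys is brevity and generality: the same citation covers any category sketchable by limits, not just presheaf categories.

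One small correction to your closing aside. Under the poset reading of $\mathbf{N}$, the equivalence $\mathbf{rSig}\simeq[\mathbf{N},\mathbf{Set}]$ fails: a map $S(n,1)\to S(n,2)=\emptyset$ forces $S(n,1)=\emptyset$. So the argument does not transfer ``verbatim'' to $\mathbf{rSig}$. The conclusion survives only because $\mathbf{rSig}$ then degenerates to the trivial category.
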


\begin{proof}
Both being categories of models over a limit sketch, this follows immediately from Corollary 1.52 of \cite{AdRo}.
\end{proof}

In order to define controlled theories, we require the use of pros.

\begin{definition}
A \emph{pro} is a strict monoidal category $P$ whose underlying monoid of objects is
\[
(\mathbf{N},+,0).
\]
\end{definition}

Pros encode the data necessary to describe operations of the form
\[
n\to k
\]
and serve as a generalization of ordinary operads.

\begin{example}\label{examples of pros}
The following are examples of pros.
\begin{itemize}
    \item The discrete category on $\mathbf{N}$ is the initial pro.
    \item The poset $(\mathbf{N},\leq)$ is a pro.
    \item If $P$ is a pro, then $P^{\mathrm{op}}$ is a pro.
    \item The category
    \[
    \Sigma=\coprod_{n=0}^{\infty}B\Sigma_n
    \]
    is a pro.
    \item The opposite of the skeleton of the category of finite sets is a pro. We denote this pro by $\mathcal{F}$.
    \item Let $E$ be a category with a chosen collection of finite products, and let $x\in\mathbf{ob}(E)$. There is a pro $\mathbf{End}(x)$ whose objects are the finite powers
    \[
    x^n=\prod_{i=1}^{n}x,
    \]
    which are naturally indexed by $\mathbf{N}$. The hom-sets are given by
    \[
    \mathbf{End}(x)(m,n)=E(x^m,x^n).
    \]
    Composition is induced by composition in $E$. The monoidal structure is induced by the product structure of $E$, which corresponds to addition of natural numbers via the canonical identifications
    \[
    x^{n+m}\cong x^n\times x^m.
    \]
\end{itemize}
\end{example}

\begin{definition}
A map of pros
\[
f:P\to Q
\]
is a strict monoidal functor satisfying
\[
f(1)=1.
\]
\end{definition}

Most authors define a map of pros to be a strict monoidal functor that is the identity on objects. For pros, these definitions are equivalent since the objects are determined by the monoidal unit and tensor product.

\begin{notation}
We write $\mathbf{Pro}$ for the category of pros.
\end{notation}

Every pro $P$ has an underlying reduced signature, denoted by $UP$, given by
\[
UP(n)=P(n,1).
\]
This defines the object assignment of a functor
\[
U:\mathbf{Pro}\to\mathbf{rSig}.
\]
The functor $U$ is right adjoint to the free pro construction
\[
\mathbf{N}[-]:\mathbf{rSig}\to\mathbf{Pro}.
\]
An explicit construction of the left adjoint and the corresponding adjunction is given in our thesis \cite{Taylor2026}.

\begin{lemma}
The category $\mathbf{Pro}$ is locally presentable.
\end{lemma}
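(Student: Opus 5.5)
We will prove that $\mathbf{Pro}$ is locally presentable in the same way the preceding lemma treated $\mathbf{Sig}$ and $\mathbf{rSig}$: exhibit $\mathbf{Pro}$ as the category of models of a limit sketch in $\mathbf{Set}$, and then apply Corollary 1.52 of \cite{AdRo}. Since the objects of a pro are fixed to be $\mathbf{N}$, a pro is purely algebraic data:
\begin{itemize}
\item a family of sets $P(n,m)$ for $n,m\in\mathbf{N}$;
\item identities $1_n\in P(n,n)$;
\item composition maps $P(m,k)\times P(n,m)\to P(n,k)$;
\item tensor maps $P(n,m)\times P(n',m')\to P(n+n',m+m')$.
\end{itemize}
These are subject to equational axioms: associativity and unitality of composition, associativity and unitality of $\otimes$ (with unit $1_0$), and functoriality of $\otimes$, namely the interchange law and $1_n\otimes 1_{n'}=1_{n+n'}$. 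A map of pros is, as remarked after the definition, the same as an identity-on-objects strict monoidal functor. It is therefore exactly a family of functions $P(n,m)\to Q(n,m)$ commuting with all these operations.

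The key step is to package this as a many-sorted finitary equational theory. The sorts are $\mathbf{N}^2$; the operation symbols are the constants $1_n$, the binary composition symbols and the binary tensor symbols; and the equations are those listed above. Then $\mathbf{Pro}$ is isomorphic to the category of models of this theory in $\mathbf{Set}$. Such a category is the category of models of a limit sketch whose underlying category has objects the finite products of sorts, with the equations imposed as commutativity conditions. Equivalently, it is a finitary variety, hence locally finitely presentable by Adámek--Rosický (their Corollary 3.7, or again via Corollary 1.52 of \cite{AdRo}).

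The only point requiring care is that nothing in the definition of a pro is non-algebraic. Two places need checking:
\begin{itemize}
\item \emph{The monoid of objects is fixed to be exactly $(\mathbf{N},+,0)$.} This is handled by making the object data part of the sorting rather than part of the structure.
\item \emph{Maps are required to satisfy $f(1)=1$.} This forces $f$ to be the identity on objects, so morphisms of models correspond precisely to maps of pros.
\end{itemize}
I expect this verification, that the sorting correctly encodes the fixed object monoid and that homomorphisms match maps of pros, to be the main (mild) obstacle. Once it is done, the conclusion follows immediately from the cited corollary.
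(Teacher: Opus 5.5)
Your proof is correct, but it follows a different route from the paper. The paper argues that the forgetful functor $U:\mathbf{Pro}\to\mathbf{rSig}$, $UP(n)=P(n,1)$, is finitary monadic. It then concludes that $\mathbf{Pro}$ is the category of algebras of a finitary monad on the locally presentable category $\mathbf{rSig}$. This mirrors its treatment of $\mathbf{Law}$ and would tie the lemma to the free-pro adjunction $\mathbf{N}[-]\dashv U$ used later.

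Your many-sorted presentation, with sorts indexed by all of $\mathbf{N}^2$, needs no free construction and gives local finite presentability directly. It is also the more reliable of the two arguments, because the reduction to reduced signatures does not work for pros. In a Lawvere theory $L(n,m)\cong L(n,1)^m$, but a pro is not determined by its components $P(n,1)$, so this $U$ is not even conservative. Here is a counterexample:
\begin{itemize}
\item Let $Q$ be the free pro on one morphism $\Delta:1\to 2$.
\item Each occurrence of $\Delta$ raises codomain minus domain by one, so any morphism $n\to 1$ with $n\geq 1$ uses no $\Delta$ and is forced to be $1_1$.
\item $Q$ maps to $\mathcal{F}$ by sending $\Delta$ to the diagonal, and $\mathcal{F}(0,1)=\emptyset$, so $Q(0,1)=\emptyset$.
\item Hence $Q(n,1)$ is $\{1_1\}$ for $n=1$ and empty otherwise, exactly as for the initial pro $\mathbf{N}$.
\item So $U$ inverts $\mathbf{N}\to Q$, which is not an isomorphism since $\Delta\in Q(1,2)$.
\end{itemize}
Your sorting amounts to monadicity of the full underlying-signature functor $\mathbf{Pro}\to\mathbf{Sig}$, $P\mapsto(P(n,m))_{n,m}$. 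That functor is finitary and does reflect isomorphisms, so your argument is in effect the repaired version of the paper's.
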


\begin{proof}
The functor
\[
U:\mathbf{Pro}\to\mathbf{rSig}
\]
is monadic, and $\mathbf{Pro}$ is therefore the category of algebras for a finitary monad on the locally presentable category $\mathbf{rSig}$. Hence $\mathbf{Pro}$ is locally presentable.
\end{proof}

We now need the following construction. Let $\mathcal{G}$ be a reduced signature and $P$ be a pro. We define $P[G]$ to be the pushout of the following span of pros.
\[
P\xleftarrow{!}\mathbf{N}\xrightarrow{!}\mathbf{N}[\mathcal{G}]
\]

\begin{lemma}
There is a functor 
\[
(-)[-]:\mathbf{Pro}\times\mathbf{rSig}\to\mathbf{Pro}
\]
defined on objects by sending a pair $(P,\mathcal{G})$ to $P[\mathcal{G}]$.
\end{lemma}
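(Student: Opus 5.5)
The plan is to obtain $(-)[-]$ as a composite of functors that already exist, so that functoriality follows from general principles and no hand-built universal property is needed. First, I will note that $\mathbf{N}$, the discrete pro, is initial in $\mathbf{Pro}$ by Example \ref{examples of pros}. Hence for any pro $Q$ the span $P\xleftarrow{!}\mathbf{N}\xrightarrow{!}Q$ has pushout equal to the binary coproduct $P\sqcup Q$ in $\mathbf{Pro}$. This coproduct exists because $\mathbf{Pro}$ is locally presentable, hence cocomplete, by the preceding lemma. So $P[\mathcal{G}]\cong P\sqcup\mathbf{N}[\mathcal{G}]$.

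Next I will fix a choice of binary coproduct for each pair of pros. Since $\mathbf{Pro}$ has all binary coproducts, the usual argument gives a functor $\sqcup:\mathbf{Pro}\times\mathbf{Pro}\to\mathbf{Pro}$. On a pair of maps $(f,g)$ it is the unique map $f\sqcup g$ determined by the universal property. Uniqueness in that universal property is what gives preservation of identities and composites.

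I then define $(-)[-]$ as the composite
\[
\mathbf{Pro}\times\mathbf{rSig}\xrightarrow{1\times\mathbf{N}[-]}\mathbf{Pro}\times\mathbf{Pro}\xrightarrow{\sqcup}\mathbf{Pro},
\]
where $\mathbf{N}[-]$ is the free pro functor, i.e.\ the left adjoint of $U$. Explicitly, for maps $f:P\to P'$ and $\phi:\mathcal{G}\to\mathcal{G}'$, the map $f[\phi]:P[\mathcal{G}]\to P'[\mathcal{G}']$ is the unique map out of the pushout induced by the two composites
\[
P\xrightarrow{f}P'\to P'[\mathcal{G}']
\qquad\text{and}\qquad
\mathbf{N}[\mathcal{G}]\xrightarrow{\mathbf{N}[\phi]}\mathbf{N}[\mathcal{G}']\to P'[\mathcal{G}'].
\]
These two composites agree on $\mathbf{N}$ automatically because $\mathbf{N}$ is initial, so the induced map exists. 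Functoriality then follows from uniqueness of maps out of a pushout together with functoriality of $\mathbf{N}[-]$.

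The only delicate point is that the object assignment must use a fixed choice of pushout, so that $P[\mathcal{G}]$ is a well-defined object and not just something determined up to isomorphism. I will handle this by choosing the coproducts once at the outset; any two such choices yield naturally isomorphic functors.
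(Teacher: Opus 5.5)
Your proposal is correct. The paper states this lemma without proof, and the intended justification is the one you give: fix a choice of the pushout of $P\xleftarrow{!}\mathbf{N}\xrightarrow{!}\mathbf{N}[\mathcal{G}]$, which is just a chosen coproduct $P\sqcup\mathbf{N}[\mathcal{G}]$ because $\mathbf{N}$ is initial, and define the action on morphisms by the universal property together with functoriality of $\mathbf{N}[-]$. Uniqueness of the induced maps then gives preservation of identities and composites.
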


We now construct a pro that will be used in the definition of a controlled theory whose algebras will be coherent $2$-groups.
\begin{itemize}
    \item Freely adjoin a morphism
    \[
    \Delta_k:1\to k
    \]
    to $\mathbf{N}$ for every $k\geq0$, obtaining the pro $\mathbf{N}^{\Delta}$.
    
    \item Define a map of pros
    \[
    j:\mathbf{N}^{\Delta}\to\mathcal{F}
    \]
    by sending $\Delta_k$ to the $k$-fold diagonal map.
    
    \item Define a cocongruence relation $\mathcal{L}$ on $\mathbf{N}^{\Delta}$ by declaring
    \[
    (f,g)\in\mathcal{L}
    \]
    if and only if
    \[
    j(f)=j(g).
    \]
    
    \item Define $\nabla$ to be the quotient pro
    \[
    \nabla=\mathbf{N}^{\Delta}/\mathcal{L},
    \]
    and let
    \[
    q:\mathbf{N}^{\Delta}\to\nabla
    \]
    denote the quotient map.
\end{itemize}

\begin{lemma}\label{constr_of_nabla}
The category $\nabla$ is a pro.  Moreover, it comes equipped with a faithful map of pros.
\[
i:\nabla\to\mathcal{F}.
\]
\end{lemma}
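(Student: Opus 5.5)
The plan is to first check that $\mathcal{L}$ is a congruence compatible with the monoidal structure, so that the quotient is again a strict monoidal category with monoid of objects $(\mathbf{N},+,0)$. Then $j$ factors through the quotient, and the factored map is faithful by the way $\mathcal{L}$ is defined.

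\emph{Step 1: $\mathcal{L}$ is a monoidal congruence.} Since $\mathcal{L}$ is the kernel relation of the map of pros $j:\mathbf{N}^{\Delta}\to\mathcal{F}$, it relates only parallel morphisms, because $j$ is the identity on objects. It is an equivalence relation on each hom-set $\mathbf{N}^{\Delta}(m,n)$, being the pullback of equality. It is closed under composition: if $j(f)=j(g)$ and $j(f')=j(g')$ with the composites defined, then functoriality gives $j(f'\circ f)=j(f')\circ j(f)=j(g')\circ j(g)=j(g'\circ g)$. The same argument with strict monoidality of $j$ gives $j(f\otimes f')=j(f)\otimes j(f')=j(g)\otimes j(g')=j(g\otimes g')$, so $\mathcal{L}$ is closed under $\otimes$.

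\emph{Step 2: the quotient is a pro.} Take the quotient category $\nabla=\mathbf{N}^{\Delta}/\mathcal{L}$. It has the same objects as $\mathbf{N}^{\Delta}$, and its hom-sets are the $\mathcal{L}$-classes. By Step 1, composition and tensor descend to well-defined operations on classes. The associativity and unit laws, the interchange law, and the strict monoidal equations all hold in $\nabla$ because they hold on representatives. Hence $\nabla$ is a strict monoidal category whose object monoid is still $(\mathbf{N},+,0)$, so it is a pro. The quotient map $q$ is then a strict monoidal functor with $q(1)=1$, i.e. a map of pros.

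\emph{Step 3: the faithful map $i$.} Define $i([f])=j(f)$. This is well defined exactly because $(f,g)\in\mathcal{L}$ means $j(f)=j(g)$. It is a strict monoidal functor with $i\circ q=j$, since the operations on classes are computed on representatives, and it satisfies $i(1)=j(1)=1$. For faithfulness, suppose $i([f])=i([g])$ for parallel $[f],[g]$. Then $j(f)=j(g)$, so $(f,g)\in\mathcal{L}$ and $[f]=[g]$.

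There is no serious obstacle. The only point needing care is that the quotient must be taken by a relation closed under both composition and tensor, rather than generating a larger congruence. Otherwise faithfulness could fail, since a generated congruence might identify more than the kernel of $j$. Step 1 settles this: the kernel of a strict monoidal functor is already closed under both operations, so no further generation occurs. Equivalently, $\nabla$ is the image of $j$ in the (identity-on-objects, faithful) factorization of maps of pros.
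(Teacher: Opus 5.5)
Your proposal is correct and follows the paper's route: $j$ factors through the quotient map $q$, giving $i$ with $i\circ q=j$. The paper's proof only invokes the universal property of the quotient. You additionally check that the kernel of $j$ is already closed under composition and $\otimes$, and you prove faithfulness of $i$ directly; the paper states faithfulness but never verifies it.
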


\begin{proof}
By construction, $\nabla$ is a pro. By definition, $j(f)=j(g)$ for all $(f,g)\in\mathcal{L}$. By the universal property of the quotient, there is a unique map of pros 
\[
i:\nabla\to\mathcal{F}
\]
such that
\[
i\circ q=j.
\]
\end{proof}

We interpret the pro $\nabla$ as the pro encoding all diagonal operations. We conclude this subsection by recalling some preliminary facts about Lawvere theories.

The category of Lawvere theories, denoted by $\mathbf{Law}$, is the category of theories out of $\mathcal{F}$. There is a forgetful adjunction
\[
U:\mathbf{Law}\to\mathbf{rSig}
\]
with left adjoint
\[
\mathbf{Fr}:\mathbf{rSig}\to\mathbf{Law}.
\]
This adjunction gives the following consequence.

\begin{lemma}
The category of Lawvere theories $\mathbf{Law}$ is locally presentable.
\end{lemma}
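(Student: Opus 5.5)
The plan mirrors the proof of the analogous statement for $\mathbf{Pro}$. I will exhibit $\mathbf{Law}$ as the category of algebras for a finitary monad on the locally presentable category $\mathbf{rSig}$. The free-forgetful adjunction $\mathbf{Fr}\dashv U$ between $\mathbf{rSig}$ and $\mathbf{Law}$ has just been recorded, and $\mathbf{rSig}\simeq[\mathbf{N},\mathbf{Set}]$ is locally presentable by the earlier lemma. So it suffices to show two things: $U$ is monadic, and the induced monad $T=U\mathbf{Fr}$ preserves filtered colimits. Then $\mathbf{Law}\simeq\mathbf{rSig}^{T}$, and categories of algebras for an accessible (here finitary) monad on a locally presentable category are locally presentable (Adámek--Rosický, Theorem 2.78 / Remark 2.78 of \cite{AdRo}).

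For monadicity I would use Beck's theorem.
\begin{itemize}
\item $U$ has a left adjoint by hypothesis.
\item $U$ reflects isomorphisms. A map of Lawvere theories is identity on objects and preserves finite products, so each hom $L(n,m)\cong L(n,1)^m$ is determined by the $(n,1)$ components. Hence a map that is bijective on all $L(n,1)$ is bijective on all homs, and is therefore an isomorphism.
\item $U$ creates coequalizers of $U$-split pairs. Given $f,g:L\rightrightarrows L'$ whose image has a split coequalizer $UL'\to S$ in $\mathbf{rSig}$, I would put a Lawvere theory structure on $S$. Composition and the product structure descend through the splitting, because split coequalizers are absolute and so are preserved by the finite products $S(n_1)\times\cdots\times S(n_k)$ used to describe substitution. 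The Lawvere theory axioms then hold because they hold in $L'$ and the quotient is surjective.
\end{itemize}

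For finitarity I would argue that filtered colimits in $\mathbf{Law}$ are computed on underlying reduced signatures. Each term in $\mathbf{Fr}(\mathcal{G})(n,1)$ involves only finitely many generators, and the composition operations of a Lawvere theory are finitary in each variable. So a filtered colimit of the signatures $UL_i$ inherits a Lawvere theory structure, and that structure is the colimit in $\mathbf{Law}$.

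An alternative route, which sidesteps both steps, is to describe Lawvere theories as models of a finite limit sketch on the sorts $\{(n,1)\}_{n\in\mathbf{N}}$. The sketch has operations for substitution and projections, and the associativity, unit and projection equations; this uses the identification $L(n,m)\cong L(n,1)^m$. Corollary 1.52 of \cite{AdRo} then gives local presentability directly, exactly as in the proof for $\mathbf{Sig}$ and $\mathbf{rSig}$.

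The main obstacle is the coequalizer-creation step: checking that substitution descends to the split coequalizer needs some bookkeeping about products of splittings. If this becomes messy, I would fall back on the limit-sketch argument, which only requires writing down the finitely many sorts and equations.
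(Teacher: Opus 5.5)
Your proposal is correct and follows the same route as the paper. The paper simply asserts that $\mathbf{Law}$ is the category of algebras for the finitary monad of $\mathbf{Fr}\dashv U$ on the locally presentable category $\mathbf{rSig}$; your Beck-monadicity and filtered-colimit checks supply the details it leaves implicit, and your fallback via a limit sketch on the sorts $L(n,1)$ would also work, matching how the paper treats $\mathbf{Sig}$ and $\mathbf{rSig}$.
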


\begin{proof}
The category $\mathbf{Law}$ is the category of algebras for the finitary monad induced by the free-forgetful adjunction. Since $\mathbf{rSig}$ is locally presentable, it follows that $\mathbf{Law}$ is locally presentable.
\end{proof}

\begin{definition}\label{def_of_present}
A \emph{presentation} of a Lawvere theory $L$ consists of a reduced signature $S$ together with a full map of Lawvere theories
\[
\phi:\mathbf{Fr}(S)\to L.
\]
A Lawvere theory is called \emph{presentable} if it admits such a presentation.
\end{definition}

\begin{lemma}\label{Law_th_are_presentable}
Every Lawvere theory is presentable.
\end{lemma}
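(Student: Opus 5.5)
The plan is to use the counit of the free--forgetful adjunction $\mathbf{Fr}\dashv U$ between $\mathbf{rSig}$ and $\mathbf{Law}$. Given a Lawvere theory $L$, take $S:=UL$, the reduced signature with $S(n)=L(n,1)$. The counit gives a map of Lawvere theories
\[
\varepsilon_L:\mathbf{Fr}(UL)\to L,
\]
and I will show that it is full. The pair $(UL,\varepsilon_L)$ is then a presentation of $L$ in the sense of Definition \ref{def_of_present}.

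Fullness reduces to the hom-sets $L(n,m)$. Both $\mathbf{Fr}(UL)$ and $L$ are Lawvere theories, so they are theories out of $\mathcal{F}$ in which the object $m$ is the $m$-fold product of $1$. A map of Lawvere theories preserves these product structures, since it commutes with the structure maps from $\mathcal{F}$ and in particular preserves projections. Hence
\[
L(n,m)\cong \prod_{i=1}^{m}L(n,1),
\]
naturally with respect to $\varepsilon_L$, and the same holds for $\mathbf{Fr}(UL)$. It is therefore enough to prove that $\varepsilon_L:\mathbf{Fr}(UL)(n,1)\to L(n,1)$ is surjective for every $n$.

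For the $m=1$ case I use the unit of the adjunction. The unit $\eta_{UL}:UL\to U\mathbf{Fr}(UL)$ sends an operation $f\in L(n,1)$ to the generator $[f]\in\mathbf{Fr}(UL)(n,1)$. The triangle identity $U\varepsilon_L\circ\eta_{UL}=1_{UL}$ says exactly that $\varepsilon_L([f])=f$, so every $f\in L(n,1)$ is hit. A general morphism $g=(g_1,\dots,g_m)\in L(n,m)$ is then the image of the tuple $([g_1],\dots,[g_m])$ formed in $\mathbf{Fr}(UL)$. Thus $\varepsilon_L$ is surjective on every hom-set, i.e. full; it is also the identity on objects, so it is in fact a quotient.

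The only delicate point is the bookkeeping between the conventions used here. One must check that $U:\mathbf{Law}\to\mathbf{rSig}$ really is $L\mapsto L(-,1)$, and that maps of Lawvere theories preserve the chosen products so that the product decomposition is compatible with $\varepsilon_L$. Both follow from the paper's description of $\mathbf{Law}$ as theories out of $\mathcal{F}$, whose maps commute with the structure maps from $\mathcal{F}$. With that in place the triangle identity does all the work.
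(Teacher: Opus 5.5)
Your proposal is correct and takes the same approach as the paper: the paper's proof also uses the counit $\epsilon_L:\mathbf{Fr}(U(L))\to L$ as the presentation, but simply asserts that it is full. You additionally supply the verification of fullness, via the triangle identity $U\epsilon_L\circ\eta_{UL}=1_{UL}$ on $L(n,1)$ together with the product decomposition $L(n,m)\cong\prod_{i=1}^{m}L(n,1)$.
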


\begin{proof}
Let $L$ be a Lawvere theory. The counit of the free-forgetful adjunction,
\[
\epsilon_L:\mathbf{Fr}(U(L))\to L,
\]
provides a presentation of $L$.
\end{proof}

There is also a left adjoint
\[
L_{(-)}:\mathbf{Pro}\to\mathbf{Law}
\]
that factors the left adjoint
\[
\mathbf{Fr}:\mathbf{rSig}\to\mathbf{Law}
\]
through
\[
\mathbf{N}[-]:\mathbf{rSig}\to\mathbf{Pro}.
\]

\begin{definition}
Let $L=(L,J)$ be a Lawvere theory. An \emph{algebra} over $L$ consists of a set $X$ together with a map of Lawvere theories
\[
A:L\to\mathbf{End}(X).
\]
\end{definition}

We denote the category of algebras over $L$ by $\mathbf{Alg}(L)$. Traditionally, Lawvere theories are described in terms of their models rather than their algebras. The categories of models and algebras associated to a Lawvere theory are equivalent, but the algebraic description is more convenient for our purposes. The following result describes the behavior of algebras under connected colimits of Lawvere theories.

\begin{theorem}\label{Alg_pre_conn_colimits}
Let
\[
J:I\to\mathbf{Law}
\]
be a connected diagram of Lawvere theories. Then there is an isomorphism of categories
\[
\mathbf{Alg}\left(\operatorname{colim}_{i\in\mathbf{ob}(I)}J(i)\right)
\cong
\lim
\left(
I^{\mathrm{op}}
\xrightarrow{J^{\mathrm{op}}}
\mathbf{Law}^{\mathrm{op}}
\xrightarrow{\mathbf{Alg}(-)}
\mathbf{CAT}
\right).
\]
\end{theorem}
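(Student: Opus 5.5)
The plan is to exploit the fact that an algebra over $L$ is nothing more than a pair $(X,A)$ consisting of a set $X$ and a map of Lawvere theories $A:L\to\mathbf{End}(X)$. Since $\operatorname{colim}_i J(i)$ corepresents cocones, a map $\operatorname{colim}_i J(i)\to\mathbf{End}(X)$ corresponds to a compatible family of maps $A_i:J(i)\to\mathbf{End}(X)$, that is, one with $A_{i'}\circ J(u)=A_i$ for every $u:i\to i'$ in $I$.

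The only subtlety is that objects of the limit on the right carry a set $X_i$ for each $i$, not a single set. This is where connectedness enters. First I will observe that $\mathbf{Alg}(-)$ is a functor $\mathbf{Law}^{\mathrm{op}}\to\mathbf{CAT}$, acting by precomposition. The key point is that each restriction functor $J(u)^*:\mathbf{Alg}(J(i'))\to\mathbf{Alg}(J(i))$ commutes strictly with the forgetful functors to $\mathbf{Set}$: it leaves the underlying set and the underlying functions unchanged.

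Next I will describe the limit in $\mathbf{CAT}$ explicitly. Its objects are families $(X_i,A_i)_{i}$ with $J(u)^*(X_{i'},A_{i'})=(X_i,A_i)$. Its morphisms are families of homomorphisms $f_i$ that are compatible in the same sense.

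Compatibility forces $X_i=X_{i'}$ whenever there is an arrow between $i$ and $i'$. Since $I$ is connected, any two objects are joined by a zigzag, so all the $X_i$ equal a single set $X$. The same zigzag argument shows that all the $f_i$ equal a single function $f:X\to Y$. The $A_i$ then form a cocone under $J$ with vertex $\mathbf{End}(X)$, and so they induce a unique $A:\operatorname{colim}J\to\mathbf{End}(X)$. In the other direction, an algebra $(X,A)$ over the colimit is sent to the family $(X,A\circ\iota_i)_i$, where the $\iota_i$ are the colimit injections.

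It remains to check that a function $f:X\to Y$ is a homomorphism of $\operatorname{colim}J$-algebras if and only if it is a $J(i)$-homomorphism for every $i$. One direction is immediate by restriction. For the converse, I plan to use that the morphisms of the colimit Lawvere theory are generated under composition and products by the images of the $\iota_i$. This follows from a presentation argument, or from the observation that the class of operations preserved by $f$ forms a subtheory through which every $\iota_i$ factors; that subtheory therefore contains the image of the colimit by its universal property.

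I expect this generation step to be the main obstacle. I would handle it cleanly by forming the subtheory of $\mathbf{End}(X)\times\mathbf{End}(Y)$, or equivalently $\mathbf{End}$ of the arrow $f$, consisting of pairs of operations compatible with $f$. The $A_i$ together with the $B_i$ lift to maps into this subtheory, and the universal property of the colimit then gives the lift for $A$ and $B$. The two assignments are inverse on the nose, so the comparison is an isomorphism of categories rather than merely an equivalence. Connectedness is essential here: for $I$ empty the limit is the terminal category, whereas $\mathbf{Alg}(\mathcal{F})\cong\mathbf{Set}$.
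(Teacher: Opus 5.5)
Your proof is correct, but it takes a different route from the paper. The paper gives no argument of its own: it cites Example~2.10 of \cite{ArkorMcDermott2025} and deduces the statement from the general theory of relative monads and their algebras developed there.

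You verify the statement directly, and each step unpacks that general mechanism by hand. Every restriction functor $J(u)^*$ is the identity on underlying sets and functions. Together with connectedness, this is the fact that connected limits in $\mathbf{CAT}/\mathbf{Set}$ are computed in $\mathbf{CAT}$. The universal property of $\operatorname{colim}J$ then handles objects. For morphisms you use the theory $\mathbf{End}(f)\subseteq\mathbf{End}(X)\times\mathbf{End}(Y)$ of pairs $(\alpha,\beta)$ with $f^m\circ\alpha=\beta\circ f^n$. It contains the projections and is closed under composition and tupling, so it is a Lawvere theory, and its inclusion is injective on hom-sets. This shows that homomorphisms, like algebras, are corepresented by maps of theories, so they too turn colimits into limits. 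That already settles what you flagged as the main obstacle, so no presentation or generation argument is needed.

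The citation buys brevity and uniformity. The paper reuses the same reference verbatim for the enriched version, where your argument would have to be rebuilt with $\mathbf{End}(f)$ defined by equalizers in $\mathcal{V}$. Your argument buys a self-contained proof that exhibits the isomorphism as the identity on underlying sets and functions. It also pinpoints exactly where connectedness enters, and your empty-diagram example shows that this hypothesis cannot be dropped.
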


\begin{proof}
This follows directly from Example 2.10 of \cite{ArkorMcDermott2025}.
\end{proof}

We have chosen not to work primarily with (colored) operads (see \cite{Yau2016}) or (colored) PROPs. The frameworks of pros and Lawvere theories provide the level of generality needed for this paper while retaining the structure necessary for the constructions involving controlled theories. Moreover, the presence of symmetric group actions inherent in operadic and prop-based frameworks introduces additional complications that are unnecessary for our purposes. These symmetries can obstruct the controlled structures we wish to encode, as discussed in the introduction.

\subsection*{Model structures}
We recall some background on model structures. The original definition of a model
structure was introduced by Quillen in \cite{Quillen67}, where only the
existence of finite limits and colimits was required. The modern formulation of
a model category was developed in \cite{DwyerHirschhornKan1997}.

\begin{definition}
A \emph{model category} is a bicomplete category $\mathcal{M}$ equipped with
three distinguished classes of morphisms
\[
\mathcal{W},\qquad C,\qquad \mathcal{F}
\]
called weak equivalences, cofibrations, and fibrations, respectively, such that
\[
(C,\mathcal{F}\cap\mathcal{W})
\qquad\text{and}\qquad
(C\cap\mathcal{W},\mathcal{F})
\]
are weak factorization systems on $\mathcal{M}$.
\end{definition}

A morphism that is both a weak equivalence and a cofibration is called a
\emph{trivial cofibration}. Dually, a morphism which is both a weak equivalence
and a fibration is called a \emph{trivial fibration}.

An object $X$ of $\mathcal{M}$ is \emph{fibrant} if the unique morphism
\[
X\longrightarrow *
\]
to the terminal object is a fibration. Dually, $X$ is \emph{cofibrant} if the
unique morphism
\[
\emptyset\longrightarrow X
\]
from the initial object is a cofibration.

Model categories provide a $1$-categorical presentation of
$(\infty,1)$-categories in the sense of \cite{RiehlVerity2020}. Although weaker
notions of homotopical structure exist, such as semi-model categories
\cite{hov3}, model categories provide a sufficiently general framework for our
purposes.

\begin{example}
The category of simplicial sets $\mathbf{sSet}$ admits several model
structures, including the Quillen model structure \cite{JarGo} and the Joyal
model structure \cite{Joyal1}.
\end{example}

\begin{example}
The category of topological spaces $\mathbf{Top}$ admits several model
structures, including the Quillen model structure \cite{Quillen67} and the
Strøm model structure \cite{Strom}.
\end{example}

\begin{example}
The category of groupoids $\mathbf{Gpd}$ admits the canonical model structure
\cite{And1}.
\end{example}

\begin{definition}
Let $\mathcal{M}$ and $\mathcal{N}$ be model categories. A
\emph{Quillen adjunction} is an adjunction
\[
\begin{tikzcd}[column sep=large]
\mathcal{M}
\arrow[r, shift left=1.2ex, "L"]
&
\mathcal{N}
\arrow[l, shift left=1.2ex, "R"]
\end{tikzcd}
\]
such that the left adjoint $L$ preserves cofibrations and trivial
cofibrations.
\end{definition}

Quillen adjunctions can be characterized as adjunctions that satisfy various equivalent set of conditions. We specify this one as to provide a definition.

\begin{example}
There is a Quillen adjunction
\[
\begin{tikzcd}[column sep=large]
\mathbf{Gpd}
\arrow[r, shift left=1.2ex, "B"]
&
\mathbf{Top}
\arrow[l, shift left=1.2ex, "\Pi_1"]
\end{tikzcd}
\]
between groupoids and topological spaces, where $B$ is the classifying space
functor and $\Pi_1$ is the fundamental groupoid functor.
\end{example}

\begin{definition}
Let
\[
\begin{tikzcd}[column sep=large]
\mathcal{M}
\arrow[r, shift left=1.2ex, "L"]
&
\mathcal{N}
\arrow[l, shift left=1.2ex, "R"]
\end{tikzcd}
\]
be a Quillen adjunction. It is a \emph{Quillen equivalence} if, for every
cofibrant object $X$ of $\mathcal{M}$ and every fibrant object $Y$ of
$\mathcal{N}$, a morphism
\[
f:LX\longrightarrow Y
\]
is a weak equivalence in $\mathcal{N}$ if and only if its adjoint
\[
X\xrightarrow{\eta_X}RLX\xrightarrow{R(f)}RY
\]
is a weak equivalence in $\mathcal{M}$.
\end{definition}

 \section{Basics of enriched categorical structures}

 \subsection*{Enriched categories}
Fix a symmetric monoidal category $(\mathcal{V},\otimes,I)$.

\begin{definition}
A \emph{$\mathcal{V}$-enriched category} $C$ consists of the following data:
\begin{itemize}
    \item a collection of objects $C_0$, whose elements are denoted by
    $x,y,z,\dots$;
    \item for every pair of objects $x,y\in C_0$, an object
    $C(x,y)$ of $\mathcal{V}$;
    \item for every triple of objects $x,y,z\in C_0$, a composition morphism
    \[
    \circ:C(y,z)\otimes C(x,y)\longrightarrow C(x,z);
    \]
    \item for every object $x\in C_0$, a unit morphism
    \[
    j_x:I\longrightarrow C(x,x).
    \]
\end{itemize}

This data is required to satisfy the following axioms.
\begin{itemize}
    \item \textbf{Associativity.}
    For all objects $w,x,y,z\in C_0$, the following diagram commutes,
    where $\alpha$ denotes the associativity isomorphism in $\mathcal V$.
    
    \[
    \begin{tikzpicture}
        \node (A) at (0,0)
        {$\bigl(C(z,w)\otimes C(y,z)\bigr)\otimes C(x,y)$};

        \node (B) at (6,0)
        {$C(z,w)\otimes\bigl(C(y,z)\otimes C(x,y)\bigr)$};

        \node (C) at (0,-2)
        {$C(z,w)\otimes C(x,z)$};

        \node (D) at (6,-2)
        {$C(z,w)\otimes C(x,z)$};

        \node (E) at (3,-3.5)
        {$C(x,w)$};

        \draw[->] (A) -- node[above] {$\alpha$} (B);
        \draw[->] (A) -- node[left] {$\circ\otimes 1$} (C);
        \draw[->] (B) -- node[right] {$1\otimes\circ$} (D);
        \draw[->] (C) -- node[below left] {$\circ$} (E);
        \draw[->] (D) -- node[below right] {$\circ$} (E);
    \end{tikzpicture}
    \]

    \item \textbf{Unit.}
    For all objects $x,y\in C_0$, the following diagrams commute, where
    $\lambda$ and $\rho$ denote the left and right unitors in $\mathcal V$.
    
    \[
    \begin{tikzpicture}
        \node (A) at (0,0) {$I\otimes C(x,y)$};
        \node (B) at (4,0) {$C(x,x)\otimes C(x,y)$};
        \node (C) at (2,-2) {$C(x,y)$};

        \draw[->] (A)--node[above]{$j_x\otimes1$}(B);
        \draw[->] (A)--node[left]{$\lambda$}(C);
        \draw[->] (B)--node[right]{$\circ$}(C);
    \end{tikzpicture}
    \]

    \[
    \begin{tikzpicture}
        \node (A) at (0,0) {$C(x,y)\otimes I$};
        \node (B) at (4,0) {$C(x,y)\otimes C(y,y)$};
        \node (C) at (2,-2) {$C(x,y)$};

        \draw[->] (A)--node[above]{$1\otimes j_y$}(B);
        \draw[->] (A)--node[left]{$\rho$}(C);
        \draw[->] (B)--node[right]{$\circ$}(C);
    \end{tikzpicture}
    \]
\end{itemize}
\end{definition}

\begin{example}
Let $\mathbf B$ be the Boolean category, obtained from the walking morphism
by equipping it with the operation $\vee$. Its composition is determined by
the table
\[
\begin{array}{c|cc}
\vee & 0 & 1\\
\hline
0&0&1\\
1&1&1
\end{array}
\]
A $\mathbf B$-enriched category is precisely a poset.
\end{example}

\begin{example}
Let
\[
[0,\infty)=([0,\infty),\leq,0,+)
\]
be the commutative monoid poset regarded as a strict permutative category.
A $[0,\infty)$-enriched category is a Lawvere metric space
(see \cite{Lawvere1973}).
\end{example}

\begin{example}
Let $(\mathcal{V},\otimes,I)$ be a symmetric monoidal closed category. There is
a canonical $\mathcal{V}$-enriched category, denoted by
$\underline{\mathcal{V}}$, whose objects are the objects of $\mathcal{V}$ and
whose hom-objects are given by
\[
\underline{\mathcal{V}}(v,v')=[v,v']
\]
for all $v,v'\in\mathrm{ob}(\mathcal{V})$.

The unit morphism
\[
j_v:I\to \underline{\mathcal{V}}(v,v)
\]
is the image of the left unitor
\[
\lambda_v:I\otimes v\to v
\]
under the tensor-hom adjunction.

The composition morphism
\[
\circ:
\underline{\mathcal{V}}(y,z)\otimes
\underline{\mathcal{V}}(x,y)
\longrightarrow
\underline{\mathcal{V}}(x,z)
\]
is induced by the evaluation morphism
\[
\mathrm{ev}:x\otimes[x,y]\to y,
\]
which is the image of
\[
\mathrm{id}_{[x,y]}:[x,y]\to[x,y]
\]
under the tensor-hom adjunction
\[
\mathcal{V}([x,y],[x,y])
\cong
\mathcal{V}(x\otimes[x,y],y).
\]

Explicitly, the composition morphism is the transpose of the composite
\[
[y,z]\otimes[x,y]\otimes x
\longrightarrow
[y,z]\otimes y
\longrightarrow
z,
\]
where the first map is induced by evaluation and the second map is the
evaluation morphism for $[y,z]$.
\end{example}

\begin{example}
A $\mathbf{Cat}$-enriched category is precisely a $2$-category.
\end{example}

\begin{notation}
Let $(\mathcal{V},\otimes,I)$ be a symmetric monoidal category, let $C$ be a
$\mathcal{V}$-category, and let $x,y\in C_0$. Given a morphism
\[
f:I\to C(x,y),
\]
we define the \emph{pre-composition morphism}
\[
f^*:C(y,z)\to C(x,z)
\]
for every $z\in C_0$ to be the composite
\[
C(y,z)
\cong
C(y,z)\otimes I
\xrightarrow{1\otimes f}
C(y,z)\otimes C(x,y)
\xrightarrow{\circ}
C(x,z).
\]

Similarly, for every $w\in C_0$, the \emph{post-composition morphism}
\[
f_*:C(w,x)\to C(w,y)
\]
is defined by the composite
\[
C(w,x)
\cong
I\otimes C(w,x)
\xrightarrow{f\otimes1}
C(x,y)\otimes C(w,x)
\xrightarrow{\circ}
C(w,y).
\]
\end{notation}

\begin{definition}
Let $(\mathcal{V},\otimes,I)$ be a symmetric monoidal category, and let $C$
and $D$ be $\mathcal{V}$-enriched categories. A
\emph{$\mathcal{V}$-enriched functor}
\[
F:C\to D
\]
consists of

\begin{itemize}
    \item an object $Fx\in D_0$ for every object $x\in C_0$, and
    \item a morphism in $\mathcal{V}$
    \[
    F_{x,y}:C(x,y)\to D(Fx,Fy)
    \]
    for every pair of objects $x,y\in C_0$.
\end{itemize}

This data satisfies the following axioms.
\begin{itemize}
    \item \textbf{Preservation of composition.}
    For all objects $x,y,z\in C_0$, the following diagram commutes.
    
    \[
    \begin{tikzpicture}[node distance=4cm]
        \node (A) at (0,0)
        {$C(y,z)\otimes C(x,y)$};

        \node (B) at (6,0)
        {$D(Fy,Fz)\otimes D(Fx,Fy)$};

        \node (C) at (0,-1.5)
        {$C(x,z)$};

        \node (D) at (6,-1.5)
        {$D(Fx,Fz)$};

        \draw[->] (A)--node[above]{$F_{y,z}\otimes F_{x,y}$}(B);
        \draw[->] (A)--node[left]{$\circ$}(C);
        \draw[->] (B)--node[right]{$\circ$}(D);
        \draw[->] (C)--node[below]{$F_{x,z}$}(D);
    \end{tikzpicture}
    \]

    \item \textbf{Preservation of units.}
    For every object $x\in C_0$, the following diagram commutes.
    
    \[
    \begin{tikzpicture}
        \node (A) at (0,0) {$I$};
        \node (B) at (4,0) {$C(x,x)$};
        \node (C) at (2,-2) {$D(Fx,Fx)$};

        \draw[->](A)--node[above]{$j_x^C$}(B);
        \draw[->](A)--node[left]{$j_{Fx}^D$}(C);
        \draw[->](B)--node[right]{$F_{x,x}$}(C);
    \end{tikzpicture}
    \]
\end{itemize}
\end{definition}

\begin{example}
A $\mathbf{B}$-enriched functor is precisely an order-preserving map between
posets.
\end{example}

\begin{definition}
Let $(\mathcal{V},\otimes,I)$ be a symmetric monoidal category, let $C$ and
$D$ be $\mathcal{V}$-enriched categories, and let
\[
F,G:C\to D
\]
be $\mathcal{V}$-enriched functors. A
\emph{$\mathcal{V}$-natural transformation}
\[
\alpha:F\Rightarrow G
\]
consists of a morphism
\[
\alpha_x:I\to D(Fx,Gx)
\]
for every object $x\in C_0$ such that, for every pair of objects
$x,y\in C_0$, the following diagram commutes, where $\tilde{G}_{x,y}=\lambda^{-1}\circ G_{x,y}$.
\[
\begin{tikzpicture}
    \node (A) at (0,0)
    {$C(x,y)$};

    \node (B) at (4,0)
    {$D(Fx,Fy)$};

    \node (C) at (0,-2)
    {$I\otimes D(Gx,Gy)$};

    \node (D) at (4,-2)
    {$D(Fx,Gy)$};

    \draw[->] (A)--node[above]{$F_{x,y}$}(B);
    \draw[->] (A)--node[left]{$\tilde{G}_{x,y}$}(C);
    \draw[->] (B)--node[right]{$(\alpha_y)_*$}(D);
    \draw[->] (C)--node[below]{$\alpha_x^*$}(D);
\end{tikzpicture}
\]
Here $\alpha_x^*$ and $(\alpha_y)_*$ denote the pre- and post-composition
morphisms defined above.
\end{definition}

\begin{example}
A $\mathbf{B}$-natural transformation between order-preserving maps
\[
f,g:P\to Q
\]
is precisely an inequality
\[
f\leq g
\]
in the pointwise ordering. Thus, the set of order-preserving maps from $P$ to
$Q$ inherits the structure of a poset by declaring
\[
f\leq g
\quad\Longleftrightarrow\quad
f(x)\leq g(x)
\]
for every $x\in P$.
\end{example}

The following lemmas are well-known. The first theorem is obtained by a diagram chase and writing out the definitions. The second lemma is Theorem 4.5 of \cite{KellyLack2001} and is more difficult to prove than the first lemma.

\begin{lemma}
Let $(\mathcal{V},\otimes,I)$ be a symmetric monoidal category. There is a
strict $2$-category
\[
\mathcal{V}\mathbf{Cat}
\]
whose objects are $\mathcal{V}$-enriched categories, whose $1$-morphisms are
$\mathcal{V}$-enriched functors, and whose $2$-morphisms are
$\mathcal{V}$-natural transformations.
\end{lemma}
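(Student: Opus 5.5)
The plan is to build the $2$-category $\mathcal{V}\mathbf{Cat}$ directly from the definitions, in the order objects, $1$-cells, $2$-cells, and check the strict $2$-category axioms one layer at a time.

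\textbf{Step 1: composition of $1$-cells.} For $\mathcal{V}$-functors $F:C\to D$ and $G:D\to E$, define $GF$ on objects by $x\mapsto G(Fx)$. On hom-objects take the composite
\[
C(x,y)\xrightarrow{F_{x,y}}D(Fx,Fy)\xrightarrow{G_{Fx,Fy}}E(GFx,GFy).
\]
Preservation of composition follows by pasting the two commuting squares. Preservation of units follows by pasting the two triangles. The identity $\mathcal{V}$-functor has identity maps on homs. Strict associativity and unitality of this composition hold because they already hold for composition of morphisms in $\mathcal{V}$ and of functions on objects. So objects and $1$-cells form a category.

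\textbf{Step 2: $2$-cells and vertical composition.} For each $\mathcal{V}$-functor $F$, the identity $2$-cell is $(1_F)_x=j_{Fx}$; naturality follows from the unit axioms of $D$. Given $\alpha:F\Rightarrow G$ and $\beta:G\Rightarrow H$, define
\[
(\beta\alpha)_x:I\cong I\otimes I\xrightarrow{\beta_x\otimes\alpha_x}D(Gx,Hx)\otimes D(Fx,Gx)\xrightarrow{\circ}D(Fx,Hx).
\]
To check naturality, first verify that pre- and post-composition behave well: $(g\circ f)^*=f^*g^*$, $(g\circ f)_*=g_*f_*$, and $f^*$ commutes with $g_*$. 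Each of these is a consequence of the associativity axiom of $D$, together with coherence in $\mathcal{V}$ for $I\otimes I\cong I$. With these identities, naturality of $\beta\alpha$ reduces to stacking the naturality squares of $\alpha$ and $\beta$. Associativity and unitality of vertical composition again come from associativity and the unit axioms in $D$.

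\textbf{Step 3: whiskering and horizontal composition.} For $\alpha:F\Rightarrow F'$ with $F,F':C\to D$ and $G:D\to E$, define the whiskerings
\[
(G\alpha)_x=G_{Fx,F'x}\circ\alpha_x,\qquad (\alpha K)_w=\alpha_{Kw}.
\]
Horizontal composition of $\alpha:F\Rightarrow F'$ and $\gamma:G\Rightarrow G'$ is defined as $(\gamma F')\cdot(G\alpha)$.

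I expect the \textbf{main obstacle} to be the interchange law: this horizontal composite must agree with $(G'\alpha)\cdot(\gamma F)$. That equality is exactly the $\mathcal{V}$-naturality square of $\gamma$, evaluated at the generalized element $\alpha_x:I\to D(Fx,F'x)$. Concretely, precompose the naturality square of $\gamma$ at $(Fx,F'x)$ with $\alpha_x$, and use the identification of $f^*$ and $g_*$ with composition against $I$-points from Step 2.

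\textbf{Step 4: remaining axioms.} The general interchange law $(\delta\gamma)*(\beta\alpha)=(\delta*\beta)(\gamma*\alpha)$ then follows formally from the whiskering case. Functoriality of whiskering in both variables, and strict associativity of horizontal composition, follow because $G_{-,-}$ preserves composition and units. Throughout, all the bookkeeping with unitors and associators of $\mathcal{V}$ is handled by Mac Lane coherence.
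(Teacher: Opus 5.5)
Your proposal is correct and takes essentially the same approach as the paper, which gives no detailed argument and only remarks that the lemma follows by writing out the definitions and doing a diagram chase. Your Steps 1--4 are a faithful expansion of that chase, and you correctly isolate the one substantive point: the whiskering interchange $(\gamma F')\cdot(G\alpha)=(G'\alpha)\cdot(\gamma F)$ is the $\mathcal{V}$-naturality square of $\gamma$ at $(Fx,F'x)$, precomposed with $\alpha_x$.
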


\begin{lemma}
Let $\mathcal{V}$ be a locally presentable closed symmetric monoidal category.
Then the category underlying $\mathcal{V}\mathbf{Cat}$ is locally presentable.
\end{lemma}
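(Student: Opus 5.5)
The statement is Theorem 4.5 of \cite{KellyLack2001}. I would nevertheless organize a proof around exhibiting $\mathcal{V}\mathbf{Cat}$ (underlying $1$-category) as the category of models of a limit sketch, or equivalently as a full reflective, accessibly embedded subcategory of a locally presentable category. The first step is to fix a regular cardinal $\lambda$ such that $\mathcal{V}$ is locally $\lambda$-presentable and the unit $I$ and the monoidal product preserve $\lambda$-presentable objects. Such a $\lambda$ exists since $\otimes$ is cocontinuous in each variable, $\mathcal{V}$ being closed.

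The second step is to pass to a category of $\mathcal{V}$-graphs. This is the category $\mathcal{V}\mathbf{Gph}$ whose objects are pairs consisting of a set $X$ and a family $C(x,y)\in\mathcal{V}$ for $x,y\in X$. It is the Grothendieck construction of the pseudofunctor $X\mapsto\mathcal{V}^{X\times X}$, with reindexing along functions. I would show that $\mathcal{V}\mathbf{Gph}$ is locally presentable: it is cocomplete, since colimits are computed by taking the colimit of object sets and left Kan extending the hom families. It also has a strong generator of $\lambda$-presentable objects, namely the graph with one object and the graph with two objects $0,1$ and a single nonzero hom $G$, where $G$ ranges over a generating set of $\lambda$-presentable objects of $\mathcal{V}$.

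The third step is to show that the forgetful functor $U:\mathcal{V}\mathbf{Cat}\to\mathcal{V}\mathbf{Gph}$ has a left adjoint. The free $\mathcal{V}$-category on a graph $G$ has hom objects
\[
\coprod_{n\ge0}\ \coprod_{x=x_0,\dots,x_n=y} G(x_{n-1},x_n)\otimes\cdots\otimes G(x_0,x_1),
\]
with the $n=0$ term contributing $I$ when $x=y$. This requires $\otimes$ to preserve coproducts, which holds by closedness. Next I would check that $U$ is monadic and that the induced monad is $\lambda$-accessible. Monadicity follows from Beck's theorem, because $U$ creates $U$-split coequalizers: the object sets are fixed along split forks and the composition descends. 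Accessibility holds because $\lambda$-filtered colimits in $\mathcal{V}\mathbf{Gph}$ are computed objectwise, and $\otimes$ preserves them. The category of algebras of an accessible monad on a locally presentable category is locally presentable (Adámek--Rosický), which finishes the argument.

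I expect the main obstacle to be the $\lambda$-filtered colimits in the third step. There, the object sets also vary along the diagram, so the hom of the colimit at a pair $(x,y)$ is itself a filtered colimit over the indices where representatives of $x$ and $y$ appear. The verification that the free-category monad commutes with these colimits therefore needs care with that reindexing. If this becomes messy, I would fall back on citing \cite{KellyLack2001} directly, as the paper does.
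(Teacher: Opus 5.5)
Your proposal is correct. Your opening citation matches the paper exactly: the paper gives no argument beyond attributing the lemma to Theorem~4.5 of \cite{KellyLack2001}. What you add is a genuine proof, showing that $\mathcal{V}\mathbf{Cat}$ is monadic over the locally presentable category $\mathcal{V}\mathbf{Gph}$ via an accessible monad and then invoking Ad\'amek--Rosick\'y. This is sound, with three adjustments.

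First, object sets are \emph{not} fixed along $U$-split forks, since the coequalizer $e\colon UB\to E$ may identify objects. Creation still works because the splitting $s\colon E\to UB$ is a graph morphism. You can therefore set $\mathrm{comp}_E=e\circ\mathrm{comp}_B\circ(s\otimes s)$ and $j_{x'}=e\circ j_{sx'}$. The identities $es=1$, $ft=1$, $gt=se$ then give three things: functoriality of $e$; associativity and unitality of $E$, transported along the split epimorphisms $e$ and $e\otimes e\otimes e$; and the universal property, via $k=hs$.

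Second, your first step can be dropped. Its justification would need the uniformization theorem, not just cocontinuity. All you actually use is that $\otimes$ preserves coproducts and filtered colimits in each variable, hence filtered colimits jointly, because the diagonal of a filtered category is final.

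Third, the obstacle you anticipate does not arise. $\mathbf{ob}\colon\mathcal{V}\mathbf{Gph}\to\mathbf{Set}$ has a right adjoint (the graph with terminal hom-objects), so it preserves colimits. Moreover, $\lambda$-filtered colimits of sets commute with finite powers. Hence the set of length-$n$ paths in a $\lambda$-filtered colimit of graphs is the colimit of the path sets, and each hom-object of the colimit is a $\lambda$-filtered colimit over representatives. The free-category monad therefore preserves all filtered colimits.

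As for what each route buys: the citation buys brevity. Your argument, with step one removed, is self-contained and yields local $\lambda$-presentability for the same $\lambda$ as $\mathcal{V}$. It also makes visible that closedness enters only through $\otimes$ preserving colimits in each variable, and that symmetry plays no role.
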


\subsection*{Transfer functors}

\begin{notation}
Let $\mathcal{V}$ and $\mathcal{W}$ be symmetric monoidal categories, and let
\[
F:\mathcal{V}\longrightarrow\mathcal{W}
\]
be a lax monoidal functor. We denote by
\[
F_*:\mathcal{V}\mathbf{Cat}\longrightarrow\mathcal{W}\mathbf{Cat}
\]
the induced $2$-functor. On objects, $F_*$ sends a
$\mathcal{V}$-enriched category $C$ to the $\mathcal{W}$-enriched category
$F_*(C)$ defined by
\[
F_*(C)_0=C_0
\]
and
\[
F_*(C)(x,y)=F(C(x,y))
\]
for all objects $x,y\in C_0$.
\end{notation}

Induced transfer $2$-functors provide a convenient framework for comparing
categories enriched over different symmetric monoidal bases. The following
lemma is immediate.

\begin{lemma}\label{ind_adjunct_enriched}
Let $\mathcal{V}$ be a symmetric monoidal category, and suppose
\[
F:\mathbf{Set}\longrightarrow\mathcal{V}
\]
is a lax monoidal functor with right adjoint
\[
U:\mathcal{V}\longrightarrow\mathbf{Set}.
\]
If $U$ is also lax monoidal, then there is an induced $2$-adjunction
\[
\begin{tikzcd}[column sep=large]
\mathbf{Cat}
\arrow[r, shift left=1.2ex, "F_*"]
&
\mathcal{V}\mathbf{Cat}
\arrow[l, shift left=0.8ex, "U_*"]
\end{tikzcd}
\]
between enriched categories.
\end{lemma}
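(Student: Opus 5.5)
We will obtain the $2$-adjunction by first building an adjunction at the level of the bases, $F_*\dashv U_*$ between $\mathbf{Cat}=\mathbf{Set}\mathbf{Cat}$ and $\mathcal{V}\mathbf{Cat}$, and then upgrading it to a $2$-adjunction. The standard route is to use the fact that the assignment $\mathcal{V}\mapsto\mathcal{V}\mathbf{Cat}$, $F\mapsto F_*$, is a $2$-functor from the $2$-category of monoidal categories, lax monoidal functors and monoidal natural transformations to $2\mathbf{CAT}$. Since $2$-functors preserve adjunctions, it suffices to show that $F\dashv U$ is an adjunction \emph{inside} the $2$-category of monoidal categories and lax monoidal functors. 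In other words, we must check that the unit $\eta:1\Rightarrow UF$ and the counit $\epsilon:FU\Rightarrow 1$ are monoidal natural transformations for the lax structures on $UF$ and $FU$.

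First we fix the lax structures. The lax structure on $F$ is given by
\[
\phi_{A,B}:FA\otimes FB\to F(A\times B),\qquad \phi_0:I\to F(*),
\]
and the lax structure on $U$ by
\[
\psi_{v,w}:Uv\times Uw\to U(v\otimes w),\qquad \psi_0:*\to UI.
\]
The composites $UF$ and $FU$ then carry the composite lax structures. The key step is to verify that $\eta$ and $\epsilon$ respect these structures. This is where we expect the main difficulty, because for arbitrary lax structures on $F$ and $U$ it can fail. The natural fix is to invoke doctrinal adjunction (Kelly). A lax monoidal structure on the left adjoint $F$ corresponds to an oplax structure on $U$, and $F\dashv U$ lifts to a monoidal adjunction precisely when the lax structure on $U$ is the mate of that on $F$. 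Hence we will assume, as the intended reading of ``$U$ is also lax monoidal'', that $\psi$ is the mate of $\phi$. In the main examples, such as $\mathcal{V}=\mathbf{Cat}$ or $\mathbf{sSet}$ with $F$ the discrete functor, $F$ is in fact strong monoidal. Then $U$ acquires its lax structure as the mate of $\phi^{-1}$, and Kelly's theorem shows that $\eta$ and $\epsilon$ are automatically monoidal. With this in hand, a direct diagram chase checks the monoidality of $\eta$ and $\epsilon$ from the triangle identities and naturality of $\phi$ and $\psi$.

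Applying the $2$-functor $(-)_*$ then yields $2$-natural transformations
\[
\eta_*:1\Rightarrow U_*F_*,\qquad \epsilon_*:F_*U_*\Rightarrow 1.
\]
The component of $\eta_*$ at a category $C$ is the identity on objects, with hom-components $\eta_{C(x,y)}$, and $\epsilon_*$ is described similarly. The triangle identities for $\eta_*$ and $\epsilon_*$ are computed homwise, so they follow from those of $\eta$ and $\epsilon$. This gives the $2$-adjunction $F_*\dashv U_*$.

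As a sanity check, we will also describe the result concretely. Since $U$ is right adjoint to $F$, we have $U\cong\mathcal{V}(I,-)$. The functor $U_*$ is therefore the underlying-category functor, and the hom-set bijection
\[
\mathcal{V}\mathbf{Cat}(F_*C,D)\cong\mathbf{Cat}(C,U_*D)
\]
is given homwise by transposition across $F\dashv U$.
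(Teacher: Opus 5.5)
Your strategy is the standard one, and it is what the paper's ``immediate'' tacitly relies on. The assignment $(-)_*$ is a $2$-functor out of monoidal categories, lax monoidal functors and monoidal transformations, so it sends an adjunction there to a $2$-adjunction whose unit and counit are computed homwise. You are also right that the lemma as literally stated is missing a hypothesis, and you should record that it is actually false without one. Take $\mathcal{V}=\mathbf{Set}$ with its cartesian structure and $M=\mathbf{Z}/2$. Let $F=-\times M$, made lax via the multiplication of $M$, and let $U=(-)^M$ carry its canonical strong structure. Then $F_*(*)=BM$, so $\mathbf{Cat}(F_*(*),BM)$ has two elements, whereas $\mathbf{Cat}(*,U_*(BM))\cong\mathbf{ob}(BM)$ has one. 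So there is no adjunction $F_*\dashv U_*$ at all. Including such an example is better than just asserting that monoidality of the unit and counit ``can fail''.

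The part that needs repair is your statement of doctrinal adjunction. A lax structure $\phi\colon FA\otimes FB\to F(A\times B)$ on a left adjoint has no mate. The mate correspondence pairs \emph{oplax} structures $F(A\times B)\to FA\otimes FB$ on $F$ with lax structures on $U$. So ``a lax structure on $F$ corresponds to an oplax structure on $U$'' is backwards, and the assumption ``$\psi$ is the mate of $\phi$'' is ill-typed. The correct form of Kelly's theorem is this: for lax $F$, lax $U$ and $F\dashv U$, the unit and counit are monoidal if and only if $\phi$ and $\phi_0$ are invertible and $\psi,\psi_0$ are the mates of $\phi^{-1},\phi_0^{-1}$. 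In the example above, the mate of $\psi$ is the diagonal of $M$, which is not inverse to multiplication. So strong monoidality of $F$ is not merely a convenient feature of ``the main examples''; it is forced, and it must be part of the hypothesis. With that hypothesis your argument goes through as written.

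Finally, your sanity check claims $U\cong\mathcal{V}(I,-)$ from adjointness alone. Adjointness only gives $U\cong\mathcal{V}(F(*),-)$, which is $\mathbf{Set}(M,-)$ in the example. The identification with $\mathcal{V}(I,-)$ uses the invertibility of $\phi_0$.
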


In many examples, the underlying category functor for
$\mathcal{V}$-enriched categories is induced by a lax monoidal functor
\[
U:\mathcal{V}\longrightarrow\mathbf{Set}
\]
which admits a lax monoidal left adjoint. A fundamental example is given by
preadditive categories. The forgetful functor
\[
U:\mathbf{Ab}\longrightarrow\mathbf{Set}
\]
is lax monoidal and admits the free abelian group functor as a lax monoidal
left adjoint.

Applying $U$ to the hom-objects of a preadditive category produces its
underlying ordinary category. Conversely, the left adjoint sends an ordinary
category to the preadditive category with the same objects and hom-abelian
groups obtained by applying the free abelian group functor to each hom-set.

\subsection*{Underlying category structures}
The underlying category construction is a special case of the induced transfer functors introduced in the previous subsection. Indeed, the hom-functor
\[
\mathcal{V}(I,-):\mathcal{V}\longrightarrow\mathbf{Set}
\]
is lax monoidal, and the corresponding induced $2$-functor
\[
(-)_0:\mathcal{V}\mathbf{Cat}\longrightarrow\mathbf{Cat}
\]
recovers the ordinary category underlying a $\mathcal{V}$-enriched category.

\begin{definition}
Let $(\mathcal V,\otimes,I)$ be a symmetric monoidal category, and let $C$ be a $\mathcal V$-enriched category. The \emph{underlying category} of $C$, denoted $C_0$, has the same objects as $C$, and for objects $x,y\in C$,
\[
C_0(x,y)=\mathcal V(I,C(x,y)).
\]
The identity morphism on an object $x$ is the unit map
\[
j_x:I\to C(x,x),
\]
and the composition of morphisms
\[
f:I\to C(x,y),\qquad
g:I\to C(y,z),
\]
is given by the composite
\[
I\cong I\otimes I
\xrightarrow{g\otimes f}
C(y,z)\otimes C(x,y)
\xrightarrow{\circ}
C(x,z),
\]
where $\circ$ denotes the enriched composition.
\end{definition}

\begin{example}
The underlying category of a $\mathbf{B}$-enriched category is a poset.
\end{example}

\begin{example}
If $\mathcal{V}$ is a symmetric monoidal closed category, then the underlying category of the self-enriched category $\underline{\mathcal{V}}$ is canonically isomorphic to $\mathcal{V}$. Indeed,
\[
\underline{\mathcal V}_0(x,y)
=\mathcal V(I,[x,y])
\cong\mathcal V(I\otimes x,y)
\cong\mathcal V(x,y),
\]
naturally in $x$ and $y$.
\end{example}

The underlying category construction extends naturally to $\mathcal{V}$-functors and $\mathcal{V}$-natural transformations.

\begin{definition}
Let $(\mathcal V,\otimes,I)$ be a symmetric monoidal category, and let
\[
F:C\to D
\]
be a $\mathcal V$-functor. The \emph{underlying functor}
\[
F_0:C_0\to D_0
\]
is defined by
\[
F_0(x)=F(x)
\]
on objects and by

\[
F(f)=F_{x,y}\circ f
\]
for all $x,y\in \mathbf{ob}(C)$ and $f\in C_0(x,y)$ on morphisms.
\end{definition}

\begin{definition}
Let
\[
F,G:C\to D
\]
be $\mathcal V$-functors, and let
\[
\alpha:F\Rightarrow G
\]
be a $\mathcal V$-natural transformation. The \emph{underlying natural transformation}
\[
\alpha_0:F_0\Rightarrow G_0
\]
is defined by
\[
(\alpha_0)_x=\alpha_x:I\to D(Fx,Gx)
\]
for each object $x\in C$.
\end{definition}

\subsection*{Copowers}
\begin{definition}
Let $(\mathcal V,\otimes,I)$ be a symmetric monoidal category closed category, $v$ be an object of $\mathcal{V}$, $C$ be a $\mathcal{V}$-category, and $c\in C_0$. The \emph{copower}, if it exists, is an object $v\cdot c\in C_0$ together with a natural isomorphism 
\[
\phi^{v,c}:C(v\cdot c,-)\Rightarrow \mathcal{V}(v,[c,-]).
\]
\end{definition}

\begin{example}
Let $C$ be a category, $A$ be a set, and $c$ be an object of $C$. The copower, if it exists, is defined as the colimit of the constant functor $*_c:D(A)\to C$. That is to say, the copower is defined as the coproduct
\[
A\cdot c=\coprod_{a\in A}c.
\]
\end{example}

\begin{lemma}\label{copower_functor}
Let $C$ be a locally small category with choices of arbitrarily indexed coproducts. Then the copower operation induces a functor
\[
\cdot:\mathbf{Set}\times C\to C
\]
defined on objects by $A\cdot v:=\coprod_{a\in A}v$.
\end{lemma}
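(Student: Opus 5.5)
The plan is to construct the functor $\cdot:\mathbf{Set}\times C\to C$ directly from the chosen coproducts, using the universal property of the coproduct for every piece of functoriality. First we fix the data. For each set $A$ and object $v$ of $C$, the hypothesis gives a chosen coproduct $A\cdot v:=\coprod_{a\in A}v$ with injections $\iota_a:v\to A\cdot v$. For $A=\emptyset$ this is the chosen initial object. The locally small hypothesis guarantees that the indexing is by a genuine set, so the coproduct makes sense.

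Next we define the functor on morphisms. Given a function $f:A\to B$ and a morphism $g:v\to w$ in $C$, let
\[
f\cdot g:A\cdot v\to B\cdot w
\]
be the unique morphism with
\[
(f\cdot g)\circ\iota_a=\iota_{f(a)}\circ g
\]
for every $a\in A$. Existence and uniqueness come from the universal property of the coproduct $A\cdot v$, applied to the family $(\iota_{f(a)}\circ g)_{a\in A}$ of maps into $B\cdot w$. This is the only step with any real content: every identity needed below will be verified by precomposing with the injections $\iota_a$.

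We then check the functor axioms by uniqueness. For identities, $1_A\cdot 1_v$ and $1_{A\cdot v}$ both satisfy the defining equation $(-)\circ\iota_a=\iota_a$, so they are equal. For composition, take $(f',g')\circ(f,g)$ with $f':B\to D$ and $g':w\to x$. Precomposing with $\iota_a$ gives
\[
(f'\cdot g')\circ(f\cdot g)\circ\iota_a=(f'\cdot g')\circ\iota_{f(a)}\circ g=\iota_{f'(f(a))}\circ g'\circ g.
\]
This is exactly the defining equation for $(f'\circ f)\cdot(g'\circ g)$. Uniqueness in the universal property therefore forces the two morphisms to agree.

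There is no genuine obstacle. The only subtlety is that functoriality holds strictly, not merely up to isomorphism, and this depends on the coproducts having been chosen once and for all, as the hypothesis provides. The copower isomorphism $C(A\cdot v,-)\cong\mathbf{Set}(A,C(v,-))$ is natural in $A$ and $v$ as well, but that is not needed for the statement.
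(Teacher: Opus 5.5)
Your proof is correct and is the argument the paper has in mind. The paper only remarks that functoriality depends entirely on the chosen coproducts, and you spell this out by defining $f\cdot g$ through $(f\cdot g)\circ\iota_a=\iota_{f(a)}\circ g$ and checking the identity and composition laws by uniqueness of the induced map. One small slip: local smallness is not what makes indexing by $A$ legitimate, since $A$ is already a set; it is what makes the copower isomorphism $C(A\cdot v,-)\cong\mathbf{Set}(A,C(v,-))$ land in $\mathbf{Set}$.
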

The proof of functorality is completely dependent on having choices of arbitrarily indexed coproducts. This functor restricts to a functor 
\[
D:\mathbf{Set}\to \mathcal{V}
\]
defined on objects by $D(A)=A\cdot I$.

\begin{proposition} \label{copies_functor}
Let $(\mathcal{V},\otimes, I)$ be a symmetric monoidal category with a choice of arbitrarily indexed coproducts. Suppose that the functor $D:\mathbf{Set}\to\mathcal{V}$ is lax-monoidal. Then $D_*:\mathbf{Cat}\to\mathcal{V}\mathbf{Cat}$ is left adjoint to the underlying category functor. 
\end{proposition}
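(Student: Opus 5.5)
We want to show that $D_*\dashv(-)_0$, where $(-)_0=\mathcal{V}(I,-)_*$. The natural route is Lemma \ref{ind_adjunct_enriched}, applied to $F=D$ and $U=\mathcal{V}(I,-)$. This needs three inputs. First, $D$ must be lax monoidal; this is part of the hypothesis. Second, $\mathcal{V}(I,-)$ must be lax monoidal; this was noted in the subsection on underlying categories. Third, we need an adjunction $D\dashv\mathcal{V}(I,-)$ on the level of base categories, and we must check that the lax structures are compatible enough for the induced transfer $2$-functors to form a $2$-adjunction.

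\textbf{The base adjunction.} For a set $A$ and an object $v$ of $\mathcal{V}$, the universal property of the chosen coproduct gives
\[
\mathcal{V}(A\cdot I,v)\cong\prod_{a\in A}\mathcal{V}(I,v)\cong\mathbf{Set}(A,\mathcal{V}(I,v)),
\]
natural in $A$ and $v$. The unit $\eta_A:A\to\mathcal{V}(I,A\cdot I)$ sends $a$ to the coprojection $\iota_a$. The counit $\varepsilon_v:\mathcal{V}(I,v)\cdot I\to v$ is the map whose components are the elements $f:I\to v$ themselves.

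\textbf{Monoidal compatibility.} Lemma \ref{ind_adjunct_enriched} is only sound if the adjunction is monoidal, that is, if unit and counit are monoidal natural transformations for the given lax structures. So we must identify the lax structure on $\mathcal{V}(I,-)$ as the doctrinal-adjunction mate of an oplax structure on $D$. Kelly's doctrinal adjunction says that lax structures on the right adjoint correspond bijectively to oplax structures on the left adjoint. The proposition, however, assumes a lax structure on $D$. I would reconcile these as follows. A lax structure on $D$ is a map $m:(A\cdot I)\otimes(B\cdot I)\to(A\times B)\cdot I$ together with $I\to *\cdot I=I$. Its component at the summand $(a,b)$ is a map $I\otimes I\to(A\times B)\cdot I$. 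I would argue that the induced lax structure on $\mathcal{V}(I,-)$ is
\[
(f,g)\longmapsto\bigl(I\cong I\otimes I\xrightarrow{f\otimes g}v\otimes w\bigr).
\]
This is exactly the lax structure used to define $(-)_0$. I would then verify directly that $\eta$ and $\varepsilon$ respect these structures, using the requirement that $m$ restricted to $\iota_a\otimes\iota_b$ is $\iota_{(a,b)}$ composed with $I\otimes I\cong I$. If the given lax structure on $D$ does not obviously satisfy this, I would instead take this canonical $m$ as the lax structure. The canonical $m$ exists because $D$ is lax monoidal precisely so that the tensor-distributivity comparison is available, and I would note that the hypothesis is used only to ensure $D_*$ is defined.

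\textbf{Direct verification.} With the monoidal adjunction in hand, the $2$-adjunction follows from Lemma \ref{ind_adjunct_enriched}. For robustness I would also check it directly. Define the unit $C\to(D_*C)_0$ as the identity on objects and $\eta$ on homs. Define the counit $D_*(X_0)\to X$ as the identity on objects and $\varepsilon$ on homs. Functoriality of these two comparisons reduces, summand by summand, to the compatibility from the previous step. Each coprojection $\iota_{(g,f)}$ of $D_*(X_0)(y,z)\otimes D_*(X_0)(x,y)$ must be sent by composition to $g\circ f$ in $X$, which is exactly the definition of composition in $X_0$. The triangle identities are inherited from those of $D\dashv\mathcal{V}(I,-)$, since everything is the identity on objects.

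The main obstacle is the third input: making precise that the lax structures on $D$ and on $\mathcal{V}(I,-)$ are mates. Concretely, this means checking that the counit respects composition, which needs $m$ to be compatible with coprojections.
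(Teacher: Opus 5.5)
Your route is the paper's. Its proof observes that $\mathcal{V}(I,-)$ is lax monoidal, right adjoint to $D$, and induces $(-)_0$, and then cites Lemma~\ref{ind_adjunct_enriched}. Your base adjunction is correct. You are also right that this lemma only yields a $2$-adjunction when $D\dashv\mathcal{V}(I,-)$ is a \emph{monoidal} adjunction; the paper's one-line proof takes this for granted.

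The gap is in how you discharge that point. The comparison that exists for free is
\[
c_{A,B}\colon (A\times B)\cdot I\longrightarrow (A\cdot I)\otimes(B\cdot I),\qquad c_{A,B}\circ\iota_{(a,b)}=(\iota_a\otimes\iota_b)\circ\lambda_I^{-1}.
\]
It is precisely the oplax mate of the lax structure on $\mathcal{V}(I,-)$, so it points the wrong way. Your requirement $m\circ(\iota_a\otimes\iota_b)=\iota_{(a,b)}\circ\lambda_I$ only says $m\circ c=1$. But unit and counit are monoidal only if $m=c^{-1}$ (with the canonical unit), i.e.\ only if $\otimes$ distributes over the copowers $A\cdot I$. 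The sentence ``the canonical $m$ exists because $D$ is lax monoidal'' is not an argument for this.

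The same assumption is hidden in your direct verification. The object $D_*(X_0)(y,z)\otimes D_*(X_0)(x,y)$ has ``coprojections $\iota_{(g,f)}$'' only if the maps $\iota_g\otimes\iota_f$ are jointly epimorphic. So checking the counit summand by summand is valid only when $c$ is (at least) epi. Moreover, replacing the given lax structure by another one changes $D_*$. Without a comparison between the two, this proves nothing about the $D_*$ in the statement.

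To close the gap, make the missing hypothesis explicit: assume $c$ is invertible. This is automatic when $\mathcal{V}$ is closed, which is the situation the paper actually uses (cf.\ Lemma~\ref{ind_fun_iso_to_prod}). Then an arbitrary lax structure $(m,u)$ is harmless:

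\begin{enumerate}
\item Identify $1\cdot I$ with $I$ via $\iota_*$. Naturality in $A$ and $B$ together with the unit axiom give $m\circ(\iota_a\otimes\iota_b)=\iota_{(a,b)}\circ u^{-1}\circ\lambda_I$. Here $u\in\mathrm{End}(I)$ is invertible because it has a one-sided inverse in the commutative monoid $\mathrm{End}(I)$.
\item Hence $m=\bigl(\coprod_{(a,b)}u^{-1}\bigr)\circ c^{-1}$.
\item The maps $\coprod_{a\in A}u$ then form a monoidal isomorphism from $(D,c^{-1},\iota_*)$ to $(D,m,u)$.
\end{enumerate}

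For the canonical structure, your $\varepsilon$ is the counit and your summand-wise check goes through. For a general $(m,u)$, the counit must be twisted to $\varepsilon\circ\coprod u^{-1}$. The untwisted $\varepsilon$ already fails for $\mathcal{V}=\mathbf{Ab}$ with $m=-c^{-1}$ and $u=-1$: there it sends the identity $-\iota_{\mathrm{id}_x}$ of $D_*(X_0)$ to $-\mathrm{id}_x$.
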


\begin{proof}
  If $D:\mathbf{Set}\to\mathcal{V}$ is lax-monoidal, then the induced functor $D_*:\mathbf{Cat}\to\mathcal{V}\mathbf{Cat}$ exists. Now define $U:\mathcal{V}\to\mathbf{Set}$ by $U=\mathcal{V}(I,-)$. The functor $U$ is functorial and lax-monoidal. Moreover, this functor induces the underlying category functor and is right adjoint to $D$. By Lemma \ref{ind_adjunct_enriched}, $D_*$ is left adjoint to the underlying category functor.
\end{proof}

Let us now suppose that $\mathcal{V}$ is a symmetric monoidal closed category. We write $n_\mathcal{V}=\coprod_{i=1}^nI$ and $(-)^{n_{\mathcal{V}}}:\mathcal{V}\to\mathcal{V}$ to denote the functor defined on objects by $b^{n_\mathcal{V}}=[n_\mathcal{V},b]$.

\begin{lemma}\label{ind_fun_iso_to_prod}
Suppose that $\mathcal{V}$ is equipped with a choice for all finite products. Then there is a natural isomorphism 
\[
(-)^{n_\mathcal{V}}\cong (-)^n.
\]
\end{lemma}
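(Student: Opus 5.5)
The plan is to build the isomorphism objectwise out of the universal properties of coproducts, finite products, and the internal hom, and then check that the resulting map is natural in $b$. The key input is that the internal hom turns colimits in its first variable into limits. The functor $[-,b]:\mathcal{V}^{\mathrm{op}}\to\mathcal{V}$ is right adjoint to $[-,b]:\mathcal{V}\to\mathcal{V}^{\mathrm{op}}$. This uses the symmetry of $\mathcal{V}$, together with
\[
\mathcal{V}(a,[c,b])\cong\mathcal{V}(a\otimes c,b)\cong\mathcal{V}(c\otimes a,b)\cong\mathcal{V}(c,[a,b]).
\]
Hence $[-,b]$ sends coproducts in $\mathcal{V}$ to products in $\mathcal{V}$.

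The argument then runs in the following order.

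\textbf{Step 1.} Fix $n$ and $b$. For each test object $a$ of $\mathcal{V}$, compute
\[
\mathcal{V}(a,[n_\mathcal{V},b])\cong\mathcal{V}(a\otimes n_\mathcal{V},b).
\]
Since $a\otimes-$ is a left adjoint, it preserves the chosen coproduct, giving
\[
a\otimes n_\mathcal{V}\cong\coprod_{i=1}^n a\otimes I\cong\coprod_{i=1}^n a.
\]
The last isomorphism is the right unitor.

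\textbf{Step 2.} Continue the chain:
\[
\mathcal{V}\Bigl(\coprod_{i=1}^n a,b\Bigr)\cong\prod_{i=1}^n\mathcal{V}(a,b)\cong\mathcal{V}(a,b^n),
\]
where the last step uses the chosen product $b^n=\prod_{i=1}^n b$.

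\textbf{Step 3.} Every isomorphism in Steps 1 and 2 is natural in $a$. By the Yoneda lemma, the composite therefore yields an isomorphism $\theta_b:b^{n_\mathcal{V}}\to b^n$. Concretely, its $i$-th component is the composite
\[
[n_\mathcal{V},b]\xrightarrow{[\iota_i,b]}[I,b]\cong b,
\]
where $\iota_i$ is the $i$-th coproduct injection.

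Naturality in $b$ is the step needing the most care, though it is routine. For $f:b\to b'$, the map $f^{n_\mathcal{V}}$ is $[n_\mathcal{V},f]$ and $f^n$ is $\prod f$. It suffices to check that the square commutes after composing with each product projection. This reduces to the commuting square
\[
[\iota_i,b']\circ[n_\mathcal{V},f]=[I,f]\circ[\iota_i,b],
\]
which holds by bifunctoriality of $[-,-]$, together with naturality of the isomorphism $[I,b]\cong b$ in $b$.

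The only real obstacle is keeping track of the unitor. Specifically, one must confirm that $[I,b]\cong b$, obtained as the transpose of $\rho_b$ (equivalently by Yoneda from $a\otimes I\cong a$), is natural in $b$. This is standard in any closed monoidal category.
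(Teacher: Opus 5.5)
Your proof is correct and follows the paper's own argument: the same Yoneda chain $\mathcal{V}(a,[n_\mathcal{V},b])\cong\mathcal{V}(a\otimes n_\mathcal{V},b)\cong\prod_{i=1}^n\mathcal{V}(a,b)\cong\mathcal{V}(a,b^n)$, with $a\otimes-$ preserving the chosen coproduct as the one nontrivial step. Your explicit componentwise check of naturality in $b$ spells out what the paper leaves as ``this forces there to be a natural isomorphism.'' Your opening observation that $[-,b]$ sends coproducts to products would, on its own, give a more direct route $[n_\mathcal{V},b]\cong\prod_{i=1}^n[I,b]\cong b^n$.
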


\begin{proof}
For all objects $a$ and $b$, we have bijections
\[
\mathcal{V}(a,b^{n_\mathcal{V}})=\mathcal{V}(a,[n_\mathcal{V},b])\cong \mathcal{V}(a\otimes n_\mathcal{V},b)\cong \mathcal{V}(a\otimes (n\cdot I),b)
\]
\[
\mathcal{V}(a\otimes \coprod_{i=1}^nI,b)\cong \mathcal{V}(a\otimes I,b)^n\cong\mathcal{V}(a,b^n).
\]
The third isomorphism is the only tricky isomorphism. It uses the fact that the tensor product in a symmetric monoidal closed category preserves colimits. 
By Yoneda, $b^{n_{\mathcal{V}}}\cong b^n$. This forces there to be a natural isomorphism 
\[
(-)^{n_\mathcal{V}}\cong (-)^n.
\]
This concludes our proof.
\end{proof}

Baez and Williams proved this lemma in \cite{BaezWilliams2020} when restricted to cartesian monoidal closed categories.

\subsection*{Tensor product of enriched categories}
Given a symmetric monoidal category $\mathcal{V}$, we now define a tensor product on the underlying category of the $2$-category $\mathcal{V}\mathbf{Cat}$. Let $C$ and $D$ be $\mathcal{V}$-categories. Define $C\otimes D$ to have object set $\mathbf{ob}(C)\times\mathbf{ob}(D)$ and hom-object 
\[
(C\otimes D)((c,d),(c',d'))=C(c,c')\otimes D(d,d').
\]
Let $c,c',c''\in\mathbf{ob}(C)$ and $d,d',d''\in\mathbf{ob}(D)$. Define the composition morphism 
\[
\circ:(C\otimes D)((c',d'),(c'',d'')\otimes (C\otimes D)((c,d),(c',d'))\to (C\otimes D)((c,d),(c'',d''))
\]
as the following composite.
\[
\begin{tikzpicture}[node distance=1.5cm]
    \node (A) {$(C\otimes D)((c',d'),(c'',d'')\otimes (C\otimes D)((c,d),(c',d'))$};
    \node (B)[below of=A] {$(C(c',c'')\otimes D(d',d''))\otimes(C(c,c')\otimes D(d,d'))$};
    \node (C) [below of=B]{$(C(c',c'')\otimes C(c,c'))\otimes (D(d',d'')\otimes D(d,d'))$};
    \node (D)[below of=C]{$C(c,c'')\otimes D(d,d'')$};
    \node (E)[below of=D]{$(C\otimes D)((c,d),(c'',d'')$};
    \draw[->] (A) to node[left=3]{$=$}(B);
    \draw[->] (B) to node[left=3]{$\cong$}(C);
    \draw[->] (C) to node[left=3]{$\circ\otimes\circ$}(D);
    \draw[->] (D) to node[left=3]{$=$}(E);
\end{tikzpicture}
\]

For all objects $c$ of $C$ and $d$ of $D$, define $j_{(c,d)}:I\to (C\otimes D)((c,d),(c,d))$ as the following composite.
\[
I\xrightarrow{\cong}I\otimes I\xrightarrow{j_c\otimes j_d}C(c,c)\otimes D(d,d)
\]
This turns $(C\otimes D,\circ,j)$ into a $\mathcal{V}$-category. Moreover, this serves as the object part of a tensor product functor 
\[
\otimes:\mathcal{V}\mathbf{Cat}^2\to\mathcal{V}\mathbf{Cat}.
\]

The unit of the tensor product we just defined is the $\mathcal{V}$-category $I$ with object set $\{0\}$ and enriched-hom given by $I(0,0)=I$.

\subsection*{Enriched functor category}
Given a bicomplete symmetric monoidal closed category, we show how to provide the category of
$\mathcal{V}$-categories to inherit a closed structure. Let $C$ and $D$ be $\mathcal{V}$-categories. We define the
$\mathcal{V}$-category $[C,D]$ as follows.

The objects of $[C,D]$ are the $\mathcal{V}$-functors
\[
F:C\longrightarrow D.
\]
For $\mathcal{V}$-functors 
\[
\begin{tikzcd}
C
  \arrow[r, shift left=0.6ex, "F"]
  \arrow[r, shift right=0.6ex, swap, "G"]
&
D,
\end{tikzcd}
\]
define the hom-object by
the enriched end
\[
[C,D](F,G)=\int_{c\in C}D(Fc,Gc).
\]
The end is equipped with projections
\[
\pi_c:[C,D](F,G)\longrightarrow D(Fc,Gc)
\]
for every object $c\in C$.

Let
\[
F,G,H:C\longrightarrow D
\]
be $\mathcal{V}$-functors. For each object $c\in C$, consider the composite
\begin{equation}\label{map_ind_by_proj_of_c}
\begin{tikzcd}[column sep=large]
[C,D](G,H)\otimes[C,D](F,G)
\arrow[r, "\pi_c\otimes\pi_c"]
&
D(Gc,Hc)\otimes D(Fc,Gc)
\arrow[r, "\circ"]
&
D(Fc,Hc).
\end{tikzcd}
\end{equation}
These morphisms form a dinatural family in $c$. Hence, by the universal
property of the end, there is a unique morphism
\[
\circ:
[C,D](G,H)\otimes[C,D](F,G)
\longrightarrow
[C,D](F,H)
\]
whose composite with each projection $\pi_c$ is \ref{map_ind_by_proj_of_c}.

For each $\mathcal{V}$-functor $F:C\longrightarrow D$ and object $c\in C$,
let
\[
J^F_c:I\longrightarrow D(Fc,Fc)
\]
be the identity morphism
\[
J^F_c=j_{Fc}.
\]
The family $\{J^F_c\}_{c\in C}$ is dinatural, and therefore the universal
property of the end gives a unique morphism
\[
J_F:I\longrightarrow[C,D](F,F).
\]

These composition and identity morphisms equip $[C,D]$ with the structure of a
$\mathcal{V}$-category. Moreover, this construction defines the object part of
a functor
\[
[-,-]:
\mathcal{V}\mathbf{Cat}^{\mathrm{op}}
\times
\mathcal{V}\mathbf{Cat}
\longrightarrow
\mathcal{V}\mathbf{Cat}.
\]

Together with the tensor product $\otimes$ defined in the previous subsection,
the hom-functor $[-,-]$ equips $\mathcal{V}\mathbf{Cat}$ with the structure of
a symmetric monoidal closed category
\[
(\mathcal{V}\mathbf{Cat},\otimes,I,[-,-]).
\]

\subsection*{Enriched products}
To define Lawvere $\mathcal{V}$-theories in Section~5, we first need a notion
of products in a $\mathcal{V}$-category. Throughout this subsection, we assume
that $\mathcal{V}$ is cartesian closed. Although these definitions can be
developed in greater generality, this assumption is sufficient for our
purposes.
\begin{definition}
Let $C$ be a $\mathcal{V}$-category and let
$x_1,\ldots,x_n$ be objects of $C$. An \emph{$n$-fold enriched product} of
$x_1,\ldots,x_n$ consists of an object
\[
\prod_{i=1}^n x_i
\]
together with a $\mathcal{V}$-natural isomorphism
\[
C\!\left(-,\prod_{i=1}^n x_i\right)
\cong
\prod_{i=1}^n C(-,x_i).
\]
\end{definition}

In other words, an enriched product is characterized by the same universal
property as an ordinary product, with hom-sets replaced by hom-objects.

\begin{lemma}
An enriched product in a $\mathcal{V}$-category induces a product in the underlying category.
\end{lemma}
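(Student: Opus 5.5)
The plan is to apply the underlying category functor $\mathcal{V}(1,-)$ to the defining $\mathcal{V}$-natural isomorphism and use that it preserves products. Since $\mathcal{V}$ is cartesian closed, the unit is the terminal object $*$, and the underlying functor is the representable $\mathcal{V}(*,-):\mathcal{V}\to\mathbf{Set}$. Being representable, it preserves all limits that exist in $\mathcal{V}$, so there is a natural bijection
\[
\mathcal{V}\Bigl(*,\prod_{i=1}^n v_i\Bigr)\cong\prod_{i=1}^n\mathcal{V}(*,v_i).
\]

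Let $C$ be a $\mathcal{V}$-category with an $n$-fold enriched product $p=\prod_{i=1}^n x_i$ and structure isomorphism $\phi:C(-,p)\cong\prod_i C(-,x_i)$. The key steps are as follows.

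\begin{enumerate}
\item Define the projections. Compose $\phi_p$ with $j_p:*\to C(p,p)$ and then with the $i$-th product projection. This gives morphisms $\pi_i:*\to C(p,x_i)$, that is, $\pi_i\in C_0(p,x_i)$.
\item Apply $\mathcal{V}(*,-)$ to the components $\phi_y$. This yields bijections $C_0(y,p)\cong\prod_i C_0(y,x_i)$ for every object $y$.
\item Check that these bijections are natural in $y$ with respect to morphisms of $C_0$. A morphism $f:*\to C(y',y)$ acts on hom-objects by precomposition $f^*$, as in the Notation on pre-composition morphisms. $\mathcal{V}$-naturality of $\phi$ says $\phi$ commutes with the maps induced by $C(y',y)$. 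Precomposing those maps with $f$ shows $\phi_{y'}\circ f^*=(\prod_i f^*)\circ\phi_y$. Applying $\mathcal{V}(*,-)$ then gives naturality of the bijections.
\item Conclude by Yoneda. The bijection $C_0(y,p)\cong\prod_i C_0(y,x_i)$ is natural in $y$, and it sends $g$ to $(\pi_i\circ g)_i$. Evaluating at $y=p$ and $\mathrm{id}_p=j_p$ recovers exactly the $\pi_i$ defined in step 1. Hence $(p,\pi_i)$ is a product in $C_0$.
\end{enumerate}

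The main obstacle is step 3: translating enriched naturality, which is stated with hom-objects, into ordinary naturality with respect to the elements $*\to C(y',y)$. I would handle it with a short diagram chase. It uses the cartesian identification $*\times C(y,-)\cong C(y,-)$, so that precomposition by $f$ is the enriched composition restricted along $f\times 1$.
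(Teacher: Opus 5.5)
Your proposal is correct and follows the same route as the paper: apply the representable $\mathcal{V}(I,-)=\mathcal{V}(*,-)$ to the defining isomorphism $C(y,\prod_i x_i)\cong\prod_i C(y,x_i)$ and use that this functor preserves finite products. Your steps 1, 3 and 4 add three things: explicit projections, naturality in $y$ via the pre-composition maps $f^*$, and the Yoneda identification of the bijection with $g\mapsto(\pi_i\circ g)_i$. The paper's one-line chain of objectwise bijections leaves these implicit, but they are needed to conclude that $\prod_i x_i$ is actually a product in $C_0$.
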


\begin{proof}
Let $x_1,\ldots,x_n$ be objects of $C$ and $y$ be an object of $C$. Then
\[
C_0(y,\prod_{i=1}^n x_i)=\mathcal{V}(I,C(y,\prod_{i=1}^n x_i))\cong \mathcal{V}(I,\prod_{i=1}^nC(y,x_i))\cong \prod_{i=1}^n\mathcal{V}(I,C(y,x_i))=\prod_{i=1}^nC_0(y, x_i),
\]
so that the enriched product in $C$ induces a product in the underlying category $C_0$.
\end{proof}

An enriched product need not be strict on the underlying category. Since our
later constructions require products that behave strictly on objects, we
record this additional property separately.

\begin{definition}
An enriched product is called \emph{strict} if its underlying product in the
ordinary category is strict.
\end{definition}

\begin{definition}
A  \emph{cartesian monoidal $\mathcal{V}$-category} is a $\mathcal V$-category $C$ such that
\begin{itemize}
    \item for all objects $x$ and $y$, we are given a choice of product $x\times y$, and
    \item there is a a choice of object $0$ with the property that for every object $x$, there is a unique morphism $!:*\to C(x,0)$ in $\mathcal V$.
\end{itemize}
\end{definition}

\section{Controlled theories}
\subsection*{The category of controlled theories}
We now introduce controlled theories. The guiding
idea is that a presentation of a Lawvere theory does not allow for a general method of categorification and homotopification. A
controlled theory enhances an ordinary presentation of a Lawvere theory by specifying a
distinguished portion of the free theory whose structure is retained and
tracked.
\begin{definition}\label{def_of_cont_th}
A \emph{controlled theory} consists of the following data:
\begin{itemize}
    \item a reduced signature $\mathcal{G}:\mathbf{N}\to\mathbf{Set}$,
    \item a pro $P$,
    \item a Lawvere theory $L$,
    \item a full morphism of Lawvere theories
    \[
    \mathbf{st}:\mathbf{Fr}(\mathcal{G})\to L,
    \]
    and
    \item a faithful morphism of pros
    \[
    i:P\to\mathbf{Fr}(\mathcal{G}).
    \]
\end{itemize}
\end{definition}

The reduced signature $\mathcal{G}$ specifies the generating operations, while
the morphism
\[
\mathbf{st}:\mathbf{Fr}(\mathcal{G})\to L
\]
encodes the relations imposed on these generators. The faithful morphism
\[
i:P\to\mathbf{Fr}(\mathcal{G})
\]
specifies the portion of the free theory whose structure data is being
tracked.

We denote a controlled theory of the form above by
\[
\left\langle
\mathcal{G}:
P\xrightarrow{i}\mathbf{Fr}(\mathcal{G})
\xrightarrow{\mathbf{st}}L
\right\rangle.
\]

\begin{definition}
A \emph{morphism of controlled theories}
\[
(I,f,H):
\left\langle
\mathcal{G}:P\xrightarrow{i}\mathbf{Fr}(\mathcal{G})
\xrightarrow{\mathbf{st}}L
\right\rangle
\longrightarrow
\left\langle
\mathcal{G}':P'\xrightarrow{i'}\mathbf{Fr}(\mathcal{G}')
\xrightarrow{\mathbf{st}'}L'
\right\rangle
\]
consists of a morphism of reduced signatures
\[
I:\mathcal{G}\to\mathcal{G}',
\]
a morphism of pros
\[
f:P\to P',
\]
and a morphism of Lawvere theories
\[
H:L\to L',
\]
such that the following diagram commutes.
\[
\begin{tikzpicture}[node distance=2cm]
\node (A) {$P$};
\node (B) [below of=A] {$P'$};
\node (C) [right of=A] {$\mathbf{Fr}(\mathcal{G})$};
\node (D) [below of=C] {$\mathbf{Fr}(\mathcal{G}')$};
\node (E) [right of=C] {$L$};
\node (F) [below of=E] {$L'$};

\draw[->] (A) -- node[left] {$f$} (B);
\draw[->] (A) -- node[above] {$i$} (C);
\draw[->] (B) -- node[below] {$i'$} (D);
\draw[->] (C) -- node[left] {$\mathbf{Fr}(I)$} (D);
\draw[->] (C) -- node[above] {$\mathbf{st}$} (E);
\draw[->] (D) -- node[below] {$\mathbf{st}'$} (F);
\draw[->] (E) -- node[right] {$H$} (F);
\end{tikzpicture}
\]
\end{definition}

We write $\mathbf{cTh}$ for the category of controlled theories. We now list some examples of controlled theories. 
\begin{example}\label{cont_theory_for_mon_gpds} We write $\Omega_{mon}:=\langle \mathcal{M}:\mathbf{N}[\mathcal{M}]\xrightarrow{i}\mathbf{Fr}(\mathcal{M})\xrightarrow{\mathbf{st}}\mathbf{M}\rangle$ to be the controlled theory that consists of
\begin{itemize}
\item  the signature $\mathcal{M}$ with one nullary operation and one binary operation,
\item the pro $\mathbf{N}[\mathcal{M}]$,
\item the Lawvere theory $\mathbf{M}$ for monoids,
\item the inclusion map $i:\mathbf{N}[\mathcal{M}]\to \mathbf{Fr}(\mathcal{M})$ of pros, and
\item the quotient map $\mathbf{st}:\mathbf{Fr}(\mathcal{M})\to\mathbf{M}$.
\end{itemize}
\end{example}

\begin{example}\label{cont_theory_for_sym_mon_gpds} We write $\Omega_{cm}:=\langle \mathcal{M}:\Sigma(\mathcal{M})\xrightarrow{i}\mathbf{Fr}(\mathcal{M})\xrightarrow{\mathbf{st}}\mathbf{CM}\rangle$ to be the controlled theory that consists of
\begin{itemize}
\item  the reduced signature $\mathcal{M}$ with one nullary operation and one binary operation,
\item the pro $\Sigma(\mathcal{M})$,
\item the Lawvere theory $\mathbf{CM}$ for commutative monoids,
\item the inclusion map $i:\Sigma(\mathcal{M})\to \mathbf{Fr}(\mathcal{M})$ of pros, and
\item the quotient map $\mathbf{st}:\mathbf{Fr}(\mathcal{M})\to\mathbf{CM}$.
\end{itemize}
\end{example}

\begin{example}\label{cont_theory_for_2_grps} We write $\Omega_{grp}:=\langle \mathcal{G}:\nabla(\mathcal{G})\xrightarrow{i}\mathbf{Fr}(\mathcal{G})\xrightarrow{\mathbf{st}}\mathbf{G}\rangle$ that consists of
  \begin{itemize}
 	\item  the reduced signature $\mathcal{G}$ with one nullary operation, one unary operation and one binary operation,
 	\item the pro $\nabla(\mathcal{G})$,
 	\item the Lawvere theory $\mathbf{G}$ for groups,
 	\item the inclusion map $i:\nabla(\mathcal{G})\to \mathbf{Fr}(\mathcal{G})$ of pros, and
 	\item the quotient map $\mathbf{st}:\mathbf{Fr}(\mathcal{G})\to\mathbf{G}$.
 \end{itemize}
\end{example}

These examples illustrate the purpose of the control pro. Rather than recording
only the generators and relations of a Lawvere theory, a controlled theory
retains a distinguished portion of the free theory whose structure data is
tracked throughout the construction. In this sense, these examples provide
a minimal presentation of algebraic structures together with the specific
coherence data that will be used in later constructions.

\subsection*{Diagrams of controlled theories} Our constructions we consider in higher categorical algebra are produced functorially; however, such constructions will not preserve colimits. We therefore need to work with special categories of diagrams to mediate this issue.
\begin{definition}
We define the category $\mathbf{Diag}(\mathbf{cTh})$ of diagrams in
$\mathbf{cTh}$ as follows. An object consists of a pair $(I,D)$, where $I$ is
a small category and
\[
J:I\to\mathbf{cTh}
\]
is a diagram of controlled theories. Given two objects $(I,D)$ and
$(I',D')$, define
\[
\mathbf{Diag}(\mathbf{cTh})((I,J),(I',J'))
=
\begin{cases}
[I,\mathbf{cTh}](J,J') & \text{if } I=I',\\
\emptyset & \text{otherwise}.
\end{cases}
\]
\end{definition}

We will primarily consider diagrams of controlled theories whose indexing categories are connected.

\begin{notation}
We write $\mathbf{Diag}^{+}(\mathbf{cTh})$ for the full subcategory of
$\mathbf{Diag}(\mathbf{cTh})$ consisting of connected diagrams.
\end{notation}

We now include the following lemma to motivate the use of connected diagrams of controlled
theories.
\begin{lemma}\label{ab_is_pullback}
The category of abelian groups is the pullback in the following diagram.
\[
\begin{tikzpicture}[node distance=3cm]
\node (A) {$\mathbf{Ab}$};
\node (B)[right of=A] {$\mathbf{Grp}$};
\node (C)[below of=A] {$\mathbf{cMon}$};
\node (D)[right of=C] {$\mathbf{Mon}$};

\draw[->] (A) -- node[above] {$U$} (B);
\draw[->] (A) -- node[left] {$U$} (C);
\draw[->] (B) -- node[right] {$U$} (D);
\draw[->] (C) -- node[below] {$U$} (D);
\end{tikzpicture}
\]
\end{lemma}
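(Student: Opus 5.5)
The most direct route is through Lawvere theories and Theorem \ref{Alg_pre_conn_colimits}, rather than checking the pullback of categories by hand. Let $\mathbf{M}$, $\mathbf{CM}$, $\mathbf{G}$ denote the Lawvere theories for monoids, commutative monoids and groups, with $\mathbf{Mon}\cong\mathbf{Alg}(\mathbf{M})$, $\mathbf{cMon}\cong\mathbf{Alg}(\mathbf{CM})$ and $\mathbf{Grp}\cong\mathbf{Alg}(\mathbf{G})$. The evident maps of theories $\mathbf{M}\to\mathbf{CM}$ and $\mathbf{M}\to\mathbf{G}$ induce, under $\mathbf{Alg}(-)$, exactly the forgetful functors $U$ of the square. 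Precomposition with a map of Lawvere theories is the forgetful functor, so this is immediate.

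Next, identify the pushout $\mathbf{CM}\sqcup_{\mathbf{M}}\mathbf{G}$ in $\mathbf{Law}$ with the theory $\mathbf{A}$ of abelian groups. Since $\mathbf{Law}$ is locally presentable, the pushout exists. I compute it by presentations, using Lemma \ref{Law_th_are_presentable}. Present $\mathbf{M}$ on the signature $\mathcal{M}$ with the associativity and unit relations. Then $\mathbf{CM}$ is $\mathbf{M}$ with the commutativity relation adjoined, and $\mathbf{G}$ is $\mathbf{M}$ with one unary generator and the inverse relations adjoined. The pushout is then generated by a nullary, a unary and a binary operation, subject to the union of these relations, which is a standard presentation of $\mathbf{A}$. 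Formally, I would verify the universal property. A map out of $\mathbf{A}$ is a map out of $\mathbf{Fr}$ of the signature respecting all the relations. That is the same as a compatible pair of maps out of $\mathbf{CM}$ and $\mathbf{G}$ that agree on $\mathbf{M}$, using that both maps from $\mathbf{M}$ are determined by where the generators go.

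Finally, the span $\mathbf{CM}\leftarrow\mathbf{M}\rightarrow\mathbf{G}$ is indexed by a connected category. Theorem \ref{Alg_pre_conn_colimits} therefore gives
\[
\mathbf{Ab}\cong\mathbf{Alg}(\mathbf{A})\cong\mathbf{Alg}(\mathbf{CM})\times_{\mathbf{Alg}(\mathbf{M})}\mathbf{Alg}(\mathbf{G})\cong \mathbf{cMon}\times_{\mathbf{Mon}}\mathbf{Grp},
\]
compatibly with the forgetful functors.

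The one step needing care is the pushout identification, in particular that the pushout in $\mathbf{Law}$ really is the theory with the combined presentation. This reduces to the observation that colimits of theories given by generators and relations are computed by taking the union of the generators and relations. The observation follows from $\mathbf{Fr}$ being a left adjoint, which preserves coproducts of signatures, together with the quotient description of full maps.

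As a sanity check, the statement can also be verified directly. An object of the strict pullback is a set with a commutative monoid structure and a group structure whose underlying monoids coincide, which is precisely an abelian group. Morphisms are monoid homomorphisms, and these automatically preserve inverses. So the comparison functor $\mathbf{Ab}\to\mathbf{cMon}\times_{\mathbf{Mon}}\mathbf{Grp}$ is bijective on objects and fully faithful, hence an isomorphism. I would keep this direct argument as a fallback if the pushout computation is considered too informal.
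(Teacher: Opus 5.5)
Your argument is correct, but your main route differs from the paper's. The paper proves the lemma by exactly the direct check you keep as a fallback. An abelian group is a set carrying a group structure and a commutative monoid structure with the same underlying monoid, so $\mathbf{Ab}$ is the pullback. The paper only addresses objects; your observation that a morphism of the pullback is a single function that is both a group and a commutative monoid homomorphism supplies the part it leaves implicit.

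Your primary argument instead realizes $\mathbf{A}$ as the pushout $\mathbf{CM}\sqcup_{\mathbf{M}}\mathbf{G}$ in $\mathbf{Law}$ and then invokes Theorem \ref{Alg_pre_conn_colimits}. The pushout identification is sound as you verify it: a compatible pair of maps out of $\mathbf{CM}$ and $\mathbf{G}$ agreeing on $\mathbf{M}$ is exactly a choice of $e,m,\iota$ satisfying all the relations. So the general remark about taking unions of generators and relations is not even needed. The one bookkeeping point is that Theorem \ref{Alg_pre_conn_colimits} yields a strict limit. You therefore need the identifications $\mathbf{Alg}(\mathbf{M})\cong\mathbf{Mon}$, $\mathbf{Alg}(\mathbf{CM})\cong\mathbf{cMon}$, $\mathbf{Alg}(\mathbf{G})\cong\mathbf{Grp}$ to commute on the nose with the forgetful functors. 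They do, with the paper's definition of algebra as a map into $\mathbf{End}(X)$.

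As for the trade-off, the direct argument is elementary and self-contained. Yours depends on the Arkor--McDermott result, but it shows the lemma as an instance of the principle that connected colimits of theories become limits of algebra categories. That principle is precisely the paper's motivation for working with connected diagrams such as $\Gamma^b_{pic}$ and for defining $\mathcal{AR}$ of a diagram as a colimit.
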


\begin{proof}
An abelian group consists of both a group structure and a commutative monoid
structure on the same underlying set, such that the two structures agree after
forgetting to a monoid. Hence the category of abelian groups is the pullback of
the categories of groups and commutative monoids over the category of monoids.
\end{proof}

We are motivated by this lemma to consider the following example of controlled theories. 

\begin{example}\label{conn_diag_cont_theory_for_Pic}
There is a connected diagram of controlled theories, which we denote by
$\Gamma^b_{pic}$, given by
\[
\begin{tikzpicture}[node distance=3cm,auto]
\node (A) {$\Omega_{mon}$};
\node (B)[right of=A] {$\Omega_{cm}$};
\node (C)[below of=A] {$\Omega_{grp}$};

\draw[->] (A) -- node[above] {$i$} (B);
\draw[->] (A) -- node[left] {$j$} (C);
\end{tikzpicture}
\]
where $\Omega_{mon}$, $\Omega_{cm}$, and $\Omega_{grp}$ are the controlled
theories defined in Examples~\ref{cont_theory_for_mon_gpds},
\ref{cont_theory_for_sym_mon_gpds}, and \ref{cont_theory_for_2_grps},
respectively.
\end{example}

\subsection*{Pros from controlled theories}
Let
\[
\Omega:=\langle \mathcal{G}:P\xrightarrow{i}\mathbf{Fr}(\mathcal{G})
\xrightarrow{\mathbf{st}}L\rangle
\]
be a controlled theory. The control pro $P$ contains more information than is
present in the resulting Lawvere theory $L$. In particular, different
morphisms in $P$ may determine the same morphism after applying the composite of the structural
maps.  Let $n,k\geq 0$ and $f\in P(n,k)$. We write 
\[
[f]=\{g\in P(n,k):\mathbf{st}(i(g))=\mathbf{st}(i(f))\}.
\]
The \emph{reduced pro} on the controlled theory $\Omega$, denoted by $\overline{\Omega}$, has homset 
\[
\overline{\Omega}(n,k)=\{[f]:f\in P(n,k)\}
\]
for all $n,k\geq 0$. The reduced pro records precisely the portion of the
control data that remains visible in the associated Lawvere theory. Moreover, the operations that we want to identify in higher dimensional constructions are put together in one set.

 \begin{example}
The controlled theory from Example \ref{cont_theory_for_mon_gpds} has reduced pro whose algebras are monoids as its reduced pro. 
 \end{example}

 \begin{lemma}\label{reduced_pro}
The construction of the induced pro from a controlled theory extends to a functor 
\[
\overline{(-)}:\mathbf{cTh}\to\mathbf{Pro}.
\]
 \end{lemma}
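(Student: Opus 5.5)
First I will check that $\overline{\Omega}$ is a pro. The cleanest way is to identify it as a quotient of $P$ by a congruence. Write $\phi=\mathbf{st}\circ i:P\to L$; this is a composite of a map of pros with a map of Lawvere theories. Here $L$ is regarded as a pro, via its strict monoidal structure given by products on objects $\mathbf{N}$. So $\phi$ is a strict monoidal functor that is the identity on objects. The relation $f\sim g$ if and only if $\phi(f)=\phi(g)$ is the kernel of a functor, so it is an equivalence relation on each $P(n,k)$. It is compatible with composition, since $\phi(g\circ f)=\phi(g)\circ\phi(f)$. It is also compatible with the monoidal product, since $\phi(f\otimes f')=\phi(f)\times\phi(f')$. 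Therefore composition $[g]\circ[f]:=[g\circ f]$, identities $[1_n]$, and tensor $[f]\otimes[f']:=[f\otimes f']$ are well defined. The strict monoidal axioms and associativity are inherited from $P$. So $\overline{\Omega}$ is a strict monoidal category with object monoid $(\mathbf{N},+,0)$, i.e.\ a pro, and the quotient $q_\Omega:P\to\overline{\Omega}$ is a full map of pros. Equivalently, $\overline{\Omega}$ is the image factorization of $\phi$, and this gives a faithful map $\overline{\Omega}\to L$ of pros with $\phi$ factoring through $q_\Omega$.

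Next I define $\overline{(-)}$ on a morphism $(I,f,H):\Omega\to\Omega'$. I set $\overline{(I,f,H)}[g]:=[f(g)]$. To see that this is well defined, suppose $\mathbf{st}(i(g))=\mathbf{st}(i(g'))$. Commutativity of the defining diagram gives
\[
\mathbf{st}'(i'(f(g)))=\mathbf{st}'(\mathbf{Fr}(I)(i(g)))=H(\mathbf{st}(i(g))),
\]
and likewise for $g'$, so $[f(g)]=[f(g')]$ in $\overline{\Omega'}$. Alternatively, one can note that $q_{\Omega'}\circ f$ coequalizes the kernel pair of $q_\Omega$, so it factors uniquely through the quotient. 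The induced map is a strict monoidal functor with $1\mapsto1$, because $f$ is and $q_\Omega,q_{\Omega'}$ are bijective on objects.

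Finally, functoriality follows from uniqueness of factorization through the epimorphism $q_\Omega$. The identity morphism $(1,1,1)$ induces $1_{\overline{\Omega}}$. A composite $(I',f',H')\circ(I,f,H)=(I'I,f'f,H'H)$ induces $[g]\mapsto[f'f(g)]$, which is the composite of the induced maps.

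The only step needing care is the first one: making precise that $L$ may be viewed as a pro and that Lawvere theory maps preserve the product-as-tensor strictly, so that the kernel of $\phi$ is a monoidal congruence. Here I use the convention of the paper that Lawvere theories are theories out of $\mathcal{F}$, with strict products indexed by $\mathbf{N}$, so that $\mathbf{st}$ and $i$ are strict on the monoidal structure.
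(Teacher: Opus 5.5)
Your proposal is correct and follows the same route as the paper, which defines the induced map by $[\alpha]\mapsto[f(\alpha)]$ and asserts functoriality without further comment. You also supply the checks the paper leaves implicit: that $\overline{\Omega}$ is a pro, because the kernel of $\mathbf{st}\circ i$ is a monoidal congruence, and that the assignment is well defined, via $\mathbf{st}'\circ i'\circ f=H\circ\mathbf{st}\circ i$.
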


 \begin{proof}
Let 
\[
	\Phi:=(I,f,H):\langle \mathcal{G}:P\xrightarrow{i}\mathbf{Fr}(\mathcal{G})\xrightarrow{\mathbf{st}}L\rangle\to \langle \mathcal{G}':P'\xrightarrow{i'}\mathbf{Fr}(\mathcal{G}')\xrightarrow{\mathbf{st}'}L'\rangle
    \] 
be a map of controlled theories. Define
\[
\Phi([\alpha])=[f(\alpha)]
\]
for all $n,k\geq 0$ and $[\alpha]\in \overline{\Omega}(n,k)$. By construction, this is functorial.
 \end{proof}

\subsection*{Admissible controlled theories}
The three controlled theories that we consider in this paper and the controlled theories that we consider in our next paper have a special quality to them. We now rigorously give a description for that quality.
\begin{construction}\label{pullback_pro}
Let $\Omega:=\langle \mathcal{G}:P\xrightarrow{i}\mathbf{Fr}(\mathcal{G})\xrightarrow{\mathbf{st}}L\rangle$ be a controlled theory. The \emph{pullback pro} of $\Omega$, denoted by $\Omega^*$, is defined to be the pullback of the following diagram in $\mathbf{Pro}$.
\[
  \begin{tikzpicture}[node distance=2cm, auto]
        \node (A) {};
       \node (B)[right of=A]{$\mathcal{F}$};
       \node (C)[below of=A]{$P$};
       \node(D)[right of=C]{$\mathbf{Fr}(\mathcal{G})$};
       \draw[->](B) to node[right=3]{$!$} (D);
       \draw[->](C) to node[below=3]{$i$} (D);
    \end{tikzpicture}
\]
This pro comes equipped with a faithful map of pros $i_\Omega:\Omega^*\to \mathcal{F}$.
\end{construction}

\begin{lemma}
Construction \ref{pullback_pro} extends to a functor of the following form.
\[
((-)^*,i_{(-)}):\mathbf{cTh}\to\mathbf{Pro}/\mathcal{F}
\]
\end{lemma}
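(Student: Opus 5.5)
We build the functor directly from the universal property of pullbacks in $\mathbf{Pro}$, which exist because $\mathbf{Pro}$ is locally presentable. The main thing to check is that the pullback legs, and the relevant functors, are natural in the controlled theory.

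First we fix the data for an object. For $\Omega=\langle \mathcal{G}:P\xrightarrow{i}\mathbf{Fr}(\mathcal{G})\xrightarrow{\mathbf{st}}L\rangle$, choose a pullback $\Omega^*$ of the cospan $P\xrightarrow{i}\mathbf{Fr}(\mathcal{G})\xleftarrow{!}\mathcal{F}$, where $!$ is the unique map of Lawvere theories out of the initial one, $\mathcal{F}$, regarded as a map of pros. Write $p_\Omega:\Omega^*\to P$ and $i_\Omega:\Omega^*\to\mathcal{F}$ for the projections.

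Faithfulness of $i_\Omega$ is a pullback-stability argument. Pullbacks in $\mathbf{Pro}$ are computed on hom-sets, because the forgetful functor to $\mathbf{rSig}$ (and in fact to graphs with object set $\mathbf{N}$) creates limits. So $\Omega^*(n,k)=P(n,k)\times_{\mathbf{Fr}(\mathcal{G})(n,k)}\mathcal{F}(n,k)$. Since $i$ is injective on each hom-set, so is the base change $i_\Omega$. We therefore take $((-)^*,i_{(-)})(\Omega)=(\Omega^*,i_\Omega)\in\mathbf{Pro}/\mathcal{F}$.

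Next we define the functor on morphisms. Let $(I,f,H):\Omega\to\Omega'$ be a morphism of controlled theories. Its left square gives $i'\circ f=\mathbf{Fr}(I)\circ i$. Initiality of $\mathcal{F}$ in $\mathbf{Law}$ gives $\mathbf{Fr}(I)\circ !=!'$, since both are maps of Lawvere theories $\mathcal{F}\to\mathbf{Fr}(\mathcal{G}')$. Hence the pair $(f\circ p_\Omega,\, i_\Omega)$ forms a cone over the cospan defining $\Omega'^*$:
\[
i'\circ f\circ p_\Omega=\mathbf{Fr}(I)\circ i\circ p_\Omega=\mathbf{Fr}(I)\circ !\circ i_\Omega=!'\circ i_\Omega .
\]
The universal property then gives a unique map of pros $\Phi^*:\Omega^*\to\Omega'^*$ with $p_{\Omega'}\circ\Phi^*=f\circ p_\Omega$ and $i_{\Omega'}\circ\Phi^*=i_\Omega$. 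The second equation says exactly that $\Phi^*$ is a morphism in $\mathbf{Pro}/\mathcal{F}$.

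Functoriality follows from the uniqueness clause of the universal property. The identity of $\Omega^*$ satisfies the two defining equations for $\mathrm{id}^*$. For composable morphisms $\Phi$ and $\Psi$, the composite $\Psi^*\circ\Phi^*$ satisfies the defining equations for $(\Psi\circ\Phi)^*$, because the pro components compose and $i_{\Omega''}\circ\Psi^*\circ\Phi^*=i_\Omega$.

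The only point needing care is using the correct map $\mathcal{F}\to\mathbf{Fr}(\mathcal{G})$, namely the one coming from initiality in $\mathbf{Law}$. The identity $\mathbf{Fr}(I)\circ{!}={!'}$ must hold as maps of pros. This is automatic, because any map of Lawvere theories commutes with the structure maps from $\mathcal{F}$, and the underlying pro map of such a map is just that map viewed as a strict monoidal functor.
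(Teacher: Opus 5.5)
Your argument is correct. The paper states this lemma without proof (Construction~\ref{pullback_pro} simply asserts that $i_\Omega$ is faithful), and your verification supplies exactly the routine check being left implicit: the universal property of the pullback, stability of hom-wise injections under base change, and the uniqueness clause for functoriality.

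One justification should be tightened. The forgetful functor $\mathbf{Pro}\to\mathbf{rSig}$, $P\mapsto P(-,1)$, does not create limits. It is not even conservative, since it sends the unique map from $\mathcal{F}^{\mathrm{op}}$ to the terminal pro to an isomorphism. So the hom-wise formula $\Omega^*(n,k)=P(n,k)\times_{\mathbf{Fr}(\mathcal{G})(n,k)}\mathcal{F}(n,k)$ should rest only on your parenthetical appeal to the forgetful functor to $[\mathbf{N}^2,\mathbf{Set}]$. That functor does create limits, because maps of pros are the identity on objects.
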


    Suppose that $P$ is a pro equipped with a faithful map of pros $i:P\to\mathcal{F}$. Let $\mathcal{G}$ be a reduced signature. There is a naturally induced map $P[\mathcal{G}]\xrightarrow{i_\mathcal{G}}\mathbf{Fr}(\mathcal{G})$ which induces a pro $P(\mathcal{G})$ together with a factorization 
    \[
    P[\mathcal{G}]\xrightarrow{q}P(\mathcal{G})\xrightarrow{i^\mathcal{G}}\mathbf{Fr}(\mathcal{G})
    \]
    such that $i^\mathcal{G}$ is a faithful map of pros and $q(f)=q(g)$ if and only if $i_\mathcal{G}(f)=i_\mathcal{G}(g)$. 

    \begin{definition}
    A controlled theory $\Omega:=\langle \mathcal{G}:P\xrightarrow{i}\mathbf{Fr}(\mathcal{G})\xrightarrow{\mathbf{st}}L\rangle$ is \emph{admissible} if 
    \[
    \Omega^*(G)\cong P
    \]
    as pros.
    \end{definition}

\begin{lemma}
    An admissible controlled theory can be identified with a triple consisting of a pro $P$ equipped with a faithful map of pros $i:P\to\mathcal{F}$ together with a reduced signature $G$ and a congruence relation $\mathcal{R}$ on $\mathbf{Fr}(G)$.
\end{lemma}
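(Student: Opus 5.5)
We exhibit mutually inverse assignments between admissible controlled theories and triples $(P\xrightarrow{i}\mathcal{F},G,\mathcal{R})$, with $i$ faithful and $\mathcal{R}$ a congruence on $\mathbf{Fr}(G)$.

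\emph{From an admissible controlled theory to a triple.} Let $\Omega=\langle \mathcal{G}:P\xrightarrow{i}\mathbf{Fr}(\mathcal{G})\xrightarrow{\mathbf{st}}L\rangle$ be admissible. Take the pro $\Omega^*$ together with its faithful map $i_\Omega:\Omega^*\to\mathcal{F}$ from Construction \ref{pullback_pro}, and the signature $G=\mathcal{G}$. Take $\mathcal{R}$ to be the kernel congruence of $\mathbf{st}$:
\[
(f,g)\in\mathcal{R}\iff \mathbf{st}(f)=\mathbf{st}(g).
\]
This is a congruence because $\mathbf{st}$ preserves composition and products.

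\emph{From a triple to an admissible controlled theory.} Given $(P,i,G,\mathcal{R})$, form $P(G)\xrightarrow{i^G}\mathbf{Fr}(G)$ as in the factorization preceding the definition of admissibility. Set $L=\mathbf{Fr}(G)/\mathcal{R}$, with the quotient map as $\mathbf{st}$; this map is full since it is bijective on objects and surjective on homs. The resulting controlled theory is $\langle G:P(G)\xrightarrow{i^G}\mathbf{Fr}(G)\xrightarrow{\mathbf{st}}L\rangle$.

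\emph{Round trip starting from a controlled theory.} Since $\Omega^*(\mathcal{G})\cong P$, the control pro is recovered up to isomorphism. Since $\mathbf{st}$ is full and identity on objects, $L\cong\mathbf{Fr}(\mathcal{G})/\ker(\mathbf{st})$. We must also check that under the isomorphism $\Omega^*(\mathcal{G})\cong P$, the map $i^{\mathcal{G}}$ corresponds to $i$. This should be built into the isomorphism: it is the canonical comparison induced by $\Omega^*\to P$ and $\mathcal{G}\to U\mathbf{Fr}(\mathcal{G})$, and it commutes with the maps to $\mathbf{Fr}(\mathcal{G})$.

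\emph{Round trip starting from a triple.} This is the main obstacle: we must show that $P(G)^*\cong P$ over $\mathcal{F}$. That is, we must show that the pullback of $P(G)\to\mathbf{Fr}(G)\leftarrow\mathcal{F}$ recovers $P$.

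We would argue as follows. The map $P\to P[G]\to P(G)$ lands in the pullback because its composite to $\mathbf{Fr}(G)$ factors through $\mathcal{F}$. It is faithful because $i$ is faithful and $\mathcal{F}\to\mathbf{Fr}(G)$ is injective on homs, as $\mathbf{Fr}(G)$ is the free theory. For surjectivity, the point is that a morphism of $P[G]$ built from $P$-morphisms and generators lands in the image of $\mathcal{F}$ only if no generator occurs. This follows from the normal form in the free theory, where the structural maps of $\mathcal{F}$ are exactly the generator-free terms.

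We must then also verify that the congruence is recovered, which is immediate from the kernel description. Once both round trips are established, the admissibility of the constructed controlled theory follows from the same pullback identification.
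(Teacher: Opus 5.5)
Your round trip starting from a triple fails at the surjectivity step. The problem is not missing detail: the claim you need is false. You assert that a morphism of $P[G]$ lands in the image of $\mathcal{F}$ only if no generator occurs. But composition in $\mathbf{Fr}(G)$ is substitution. A morphism of $P$ whose image in $\mathcal{F}$ discards a variable erases any generator term plugged into that slot, and what remains can be a morphism of $\mathcal{F}$ that is not in $P$.

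For instance, let $P\subseteq\mathcal{F}$ be the sub-pro generated by $p\colon 2\to 2$, $(x,y)\mapsto(x,x)$. Every morphism of $P$ is an endomorphism, so $P(1,2)=\emptyset$. If $G$ contains a constant $e$, then $p\circ(1+e)\colon 1\to 2$ in $P[G]$ maps to $x\mapsto(x,e)\mapsto(x,x)$, the diagonal of $\mathcal{F}$. Hence $P(G)^*(1,2)\neq\emptyset$, and $P(G)^*\not\cong P$ even abstractly, since maps of pros are the identity on objects. Similarly, the pro generated by $c=(x_1,x_2,x_3)\mapsto x_1$ has $P(4,1)=\emptyset$, yet a binary generator $m$ gives the projection $c\circ(1+m+1)\colon 4\to 1$.

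So the triples $(P,G,\mathcal{R})$ and $(P(G)\times_{\mathbf{Fr}(G)}\mathcal{F},G,\mathcal{R})$ produce the same controlled theory, and the identification you are trying to prove fails for them. Your normal-form argument works only when $P$ contains no discarding maps, as for $\mathbf{N}$ or $\Sigma$. What does hold is a bijection between admissible controlled theories and \emph{saturated} triples, those with $P=P(G)\times_{\mathbf{Fr}(G)}\mathcal{F}$. For these both round trips go through. Moreover, $S$ of an arbitrary triple is admissible anyway, because $P\subseteq P(G)^*\subseteq P(G)$ forces $(P(G)^*)(G)=P(G)$. The paper states this lemma without proof, and its later assertion $R\circ S\cong 1_{\mathbf{acTh}}$ breaks on the same examples.

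There is a smaller issue in the other round trip. No canonical comparison $\Omega^*(\mathcal{G})\to P$ is induced by $\Omega^*\to P$ and $\mathcal{G}\to U\mathbf{Fr}(\mathcal{G})$, because the generators need not lie in $P$. The definition of admissibility only gives an abstract isomorphism of pros, and it need not commute with the maps to $\mathbf{Fr}(\mathcal{G})$. Take unary generators $u_1,u_2,\ldots$ and let $P$ be generated by $u_2,u_3,\ldots$. Then $\Omega^*=\mathbf{N}$ and $\Omega^*(\mathcal{G})\cong P$ by relabelling, so $\Omega$ is admissible in the literal sense. Yet $S(R(\Omega))\not\cong\Omega$ in $\mathbf{cTh}$: for any bijection $I$ of generators, $\mathbf{Fr}(I)$ carries $P$ onto the sub-pro generated by all generators except $I(u_1)$, never onto $\Omega^*(\mathcal{G})$. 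You need to read admissibility as $P=\Omega^*(\mathcal{G})$ as sub-pros of $\mathbf{Fr}(\mathcal{G})$. With that reading, your treatment of this direction (kernel congruence, $L\cong\mathbf{Fr}(\mathcal{G})/\ker\mathbf{st}$) is fine.
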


This provides us a new definition of admissible controlled theory.

\begin{definition}
An \emph{admissible controlled theory} consists of a pro $P$ equipped with a faithful map of pros $i:P\to\mathcal{F}$ together with a reduced signature $G$ and a congruence relation $\mathcal{R}$ on $\mathbf{Fr}(G)$.

We write $(P\xrightarrow{i}\mathcal{F},G,\mathcal{R})$ to denote an admissible controlled theory.
\end{definition}

\begin{definition}
A morphism of admissible controlled theories from 
\[
(P\xrightarrow{i}\mathcal{F},G,\mathcal{R})
\]
to 
\[
(Q\xrightarrow{j}\mathcal{F},H,\mathcal{R}')
\]
is a pair $(f,k)$ where $f:G\to H$ is a map of reduced signatures and $k:P\to Q$ is a map of pros such that $j\circ k=i$ and $\mathbf{Fr}(f)(\mathcal{R})\subset \mathcal{R}'$.
\end{definition}

We write $\mathbf{acTh}$ to denote the category of admissible controlled theories. The following proposition follows from using the locally presentable nature of reduced signatures, pros, and Lawvere theories.

\begin{proposition}
The category of admissible controlled theories has all small limits and colimits, i.e., is bicomplete as a category.
\end{proposition}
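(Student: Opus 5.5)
Since $\mathbf{acTh}$ is presented as a category of structured triples, I plan to show it is the category of models of a small limit sketch. Its objects are tuples $(P\xrightarrow{i}\mathcal{F},G,\mathcal{R})$. The key observation is that $\mathbf{acTh}$ is a full reflective subcategory, closed under the relevant constructions, of the product category
\[
(\mathbf{Pro}/\mathcal{F})_{\mathrm{faith}}\times\mathbf{Cong},
\]
where $\mathbf{Cong}$ is the category of pairs $(G,\mathcal{R})$ consisting of a reduced signature and a congruence on $\mathbf{Fr}(G)$. Morphisms in $\mathbf{Cong}$ are signature maps $f$ with $\mathbf{Fr}(f)(\mathcal{R})\subset\mathcal{R}'$. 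The morphisms of $\mathbf{acTh}$ are exactly pairs of morphisms in the two factors with no compatibility condition between them. Hence $\mathbf{acTh}$ literally splits as the product of the category of pros faithfully over $\mathcal{F}$ and $\mathbf{Cong}$. A product of bicomplete categories is bicomplete, computed componentwise. So it suffices to treat each factor.

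For $\mathbf{Cong}$, I first identify it with a category equivalent to the full subcategory of $\mathbf{rSig}\downarrow\mathbf{Law}$ on pairs $(G,\ \mathbf{st}:\mathbf{Fr}(G)\to L)$ with $\mathbf{st}$ full. This uses the correspondence between congruences and full quotient maps, taking $L=\mathbf{Fr}(G)/\mathcal{R}$. Limits are computed by taking the limit $G=\lim G_j$ in $\mathbf{rSig}$, and letting $\mathcal{R}$ be the intersection of the pullbacks $\mathbf{Fr}(\pi_j)^{-1}(\mathcal{R}_j)$. This is again a congruence, since preimages and intersections of congruences are congruences. Colimits are computed by taking $G=\operatorname{colim}G_j$ and letting $\mathcal{R}$ be the congruence generated by the images $\mathbf{Fr}(\iota_j)(\mathcal{R}_j)$. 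The universal properties are immediate from the ordering condition on morphisms. Here I use that $\mathbf{rSig}$ is locally presentable, hence bicomplete, and that congruences on a fixed Lawvere theory form a complete lattice.

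For pros faithful over $\mathcal{F}$, an object is, up to isomorphism, a subpro of $\mathcal{F}$. I therefore expect this category to be equivalent to a preorder, namely the lattice of subpros of $\mathcal{F}$. One might worry about non-injective-on-arrows maps, but faithfulness together with the commuting condition $j\circ k=i$ forces $k$ to be faithful and unique. In this lattice, limits are intersections of images. The empty limit is $\mathcal{F}$ itself. Colimits are the subpros generated by unions, with the initial object being the discrete pro $\mathbf{N}$, which is faithful over $\mathcal{F}$. A complete lattice is bicomplete as a category. Hence this factor is bicomplete as well, being equivalent to a complete lattice.

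The main obstacle I anticipate is the second factor: checking that the colimit $\operatorname{colim}(P_j)$ in $\mathbf{Pro}$ need not be faithful over $\mathcal{F}$, so it must be replaced by its image in $\mathcal{F}$. This is exactly the reflection onto the subpro lattice. I would use the (surjective-on-arrows, faithful) image factorization in $\mathbf{Pro}$, available by local presentability of $\mathbf{Pro}$, to make this precise. I would also verify that this image factorization interacts correctly with the componentwise description.
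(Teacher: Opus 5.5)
Your argument is correct, and it takes a different and more explicit route than the paper. The paper gives only a one-line remark: the result follows from local presentability of $\mathbf{rSig}$, $\mathbf{Pro}$ and $\mathbf{Law}$.

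You observe that the paper's morphisms $(f,k)$ impose no condition linking $f$ and $k$, so $\mathbf{acTh}$ is literally the product $(\mathbf{Pro}/\mathcal{F})_{\mathrm{faith}}\times\mathbf{Cong}$. You then treat the two factors separately.
\begin{itemize}
\item The first factor is thin, because faithfulness of $j$ forces any $k$ with $j\circ k=i$ to be unique and faithful. It is therefore equivalent to the complete lattice of sub-pros of $\mathcal{F}$: meets are intersections, joins are generated sub-pros, the top is $\mathcal{F}$ and the bottom is $\mathbf{N}$.
\item The second factor sits over $\mathbf{rSig}$ with complete-lattice fibres. Limits are lifted by intersecting the preimage congruences $\mathbf{Fr}(\pi_j)^{-1}(\mathcal{R}_j)$. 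Colimits are lifted by taking the congruence generated by the images $\mathbf{Fr}(\iota_j)(\mathcal{R}_j)$.
\end{itemize}

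What your route buys is that it confronts exactly the point the paper's remark passes over. Requiring $i$ to be faithful, and replacing full maps $\mathbf{st}$ by congruences, cuts out full subcategories of the ambient locally presentable categories. These subcategories are not closed under colimits there. For instance, the coproduct in $\mathbf{Pro}/\mathcal{F}$ of $\mathrm{id}_{\mathcal{F}}$ with itself is not faithful over $\mathcal{F}$. So local presentability of $\mathbf{Pro}$ and $\mathbf{Law}$ alone does not yield the result without an image or reflection argument. Your lattice description supplies that argument.

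The paper's route, if filled in, would exhibit $\mathbf{acTh}$ as an accessibly embedded reflective subcategory of $\mathbf{Pro}/\mathcal{F}\times(\mathbf{Fr}\downarrow\mathrm{id}_{\mathbf{Law}})$. That would give the stronger conclusion that $\mathbf{acTh}$ is locally presentable. Your proof does not state this, though it is not needed for bicompleteness.

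A few minor points of presentation:
\begin{itemize}
\item Your opening plan (a limit sketch) and the phrase ``full reflective subcategory'' do not match what you actually prove, since $\mathbf{acTh}$ \emph{is} the product.
\item The identification of $\mathbf{Cong}$ with full maps $\mathbf{Fr}(G)\to L$ is never used.
\item The anticipated ``main obstacle'' in your last paragraph is already settled by the lattice description. The join of sub-pros is precisely the image in $\mathcal{F}$ of the colimit computed in $\mathbf{Pro}$.
\end{itemize}
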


We want to now show how every controlled theory has a suitable replacement to an admissible controlled theory. Let 
\[
\Omega:=\langle \mathcal{G}:P\xrightarrow{i}\mathbf{Fr}(\mathcal{G})\xrightarrow{\mathbf{st}}L\rangle
\]
be a controlled theory. Define $R(\Omega)=(\Omega^*\xrightarrow{i_\Omega}\mathcal{F},\mathcal{G},R_\Omega)$ where $R_\Omega$ is the congruence relation generated by the following map of Lawvere theories.
\[
\mathbf{st}:\mathbf{Fr}(\mathcal{G})\to L
\]
This upgrades to a functor $R:\mathbf{cTh}\to \mathbf{acTh}$. 

Let $(P\xrightarrow{i}\mathcal{F},G,\mathcal{R})$ be an admissible controlled theory. Define 
\[
S(P\xrightarrow{i}\mathcal{F},G,\mathcal{R})=\langle G:P(G)\xrightarrow{i^G}\mathbf{Fr}(G)\xrightarrow{\mathbf{st}}\mathbf{Fr}(\mathcal{G})/\mathcal{R}\rangle.
\]
This upgrades to a functor
\[
S:\mathbf{acTh}\to\mathbf{cTh}.
\]
The first thing to notice is that there is a natural isomorphism $\epsilon:R\circ S\cong 1_{\mathbf{acTh}}$. Moreover, there is a natural transformation $\eta:1_{\mathbf{cTh}}\Rightarrow S\circ R$. The data $(S,R,\eta,\epsilon)$ forms an adjunction. The \emph{admissible replacement} of a controlled theory $\Omega$ is the controlled theory $S(R(\Omega))$ and it comes equipped with a morphism
\[
\eta_{\Omega}:\Omega\to S(R(\Omega))
\]
of controlled theories.

We shall not use this adjunction or the admissible replacement in this paper. We include it here because it is interesting and we will require it in future work.

 \section{Enriched categorical algebra}
 For this section, we fix a locally presentable cartesian closed category $\mathcal{V}$ with a choice of arbitrarily indexed coproducts.
\subsection*{Enriched signatures}
 \begin{definition}
    A \emph{$\mathcal{V}$-signature} is a functor $\chi:\mathbf{N}^2\to \mathcal{V}$.
 \end{definition}

We write $\mathcal{V}\mathbf{Sig}$ to denote the functor category $[\mathbf{N}^2,\mathcal{V}]$. Let $\chi:\mathbf{N}^2\to\mathcal{V}$ be a $\mathcal{V}$-signature. Its \emph{underlying signature} $U(\chi)$ is defined by
\[
U(\chi)_{n,m}=\mathcal{V}(I,\chi_{n,m})
\]
for all $n,m\geq 0$. This upgrades to a functor $U:\mathcal{V}\mathbf{Sig}\to\mathbf{Sig}$. Moreover, $U$ has a left adjoint, call it $D$.

\subsection*{Enriched pros}
\begin{definition}
A \emph{$\mathcal{V}$-pro} consists of a $\mathcal{V}$-signature $P$ together with
\begin{itemize}
    \item a \emph{unit morphism} $1_k:*\to P(k,k)$ in $\mathcal{V}$ for all $k\geq 0$, and
    \item a \emph{composition morphism} 
    \[
    \Gamma:P(\sum_{i=1}^tm_i,k)\times\prod_{i=1}^tP(n_i,m_i)\to P(\sum_{i=1}^tn_i,k)
    \]
    in $\mathcal{V}$ for all $t\geq 1$ and $n_1,m_1,\dots n_t,m_t,k\geq 0$.
\end{itemize}
This data satisfies axioms of compatibility of unities, unital composition, and associative composition.
\end{definition}

The characterization we have provided in this definition is similar to the characterization of enriched monad provided in \cite{McDermottUustalu2022}. We did not specify the structure of a $\mathcal{V}$-category. The $\mathcal{V}$-category structure exists by the additonal structure and axioms. We leave the following proposition to the reader. 

\begin{proposition}
A $\mathcal{V}$-pro is the same as a monoid in $\mathcal{V}\mathbf{Cat}$ with object set $\mathbf{N}$, where we write $\mathcal{V}\mathbf{Cat}$ to mean the underlying category of the $2$-category with the same name.
\end{proposition}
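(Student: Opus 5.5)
The plan is to build, from a $\mathcal{V}$-pro $P$, a $\mathcal{V}$-category $\underline{P}$ with object set $\mathbf{N}$ together with a monoid structure on it in $(\mathcal{V}\mathbf{Cat},\otimes,I)$. Conversely, from such a monoid we will recover a $\mathcal{V}$-pro, and then check that the two constructions are mutually inverse and respect morphisms. Throughout, $\otimes$ in $\mathcal{V}$ is the cartesian product $\times$ with unit $I=*$. We use the tensor product of $\mathcal{V}$-categories and its unit $I$ from the subsection on the tensor product of enriched categories.

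\textbf{From a $\mathcal{V}$-pro to a monoid.}
\begin{enumerate}
\item Set $\underline{P}(n,k)=P(n,k)$ and take the identities $j_k=1_k$.
\item Take composition to be the $t=1$ instance of $\Gamma$, namely $P(m,k)\times P(n,m)\to P(n,k)$. Associativity and unitality of this composition are the $t=1$ cases of the associativity and unit axioms.
\item Define the multiplication $\mu:\underline{P}\otimes\underline{P}\to\underline{P}$ on objects by $(n,m)\mapsto n+m$.
\item On homs, $\mu$ must be a map $P(n,k)\times P(n',k')\to P(n+n',k+k')$. Since $P$ is only given via many-to-one-style composites $\Gamma$, we form
\[
P(n,k)\times P(n',k')\xrightarrow{\cong}P(n,k)\times P(n',k')\times *\xrightarrow{1\times 1\times 1_{k+k'}}P(n,k)\times P(n',k')\times P(k+k',k+k'),
\]
and then apply $\Gamma$ with $t=2$, reordered via the symmetry of $\times$.
\item Take the unit $I\to\underline{P}$ to pick out the object $0$.
\end{enumerate}

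\textbf{Checks on the monoid structure.}
\begin{enumerate}
\item $\mu$ is a $\mathcal{V}$-functor, i.e.\ the interchange law: $(g\circ f)\otimes(g'\circ f')=(g\otimes g')\circ(f\otimes f')$. This is an instance of the associativity axiom for $\Gamma$, comparing the two ways of composing a $2$-fold composite with unary composites.
\item $\mu$ is associative and unital on homs. This again follows from associativity of $\Gamma$ for $t=3$ together with the compatibility of units. On objects it reduces to the monoid $(\mathbf{N},+,0)$, and strictness holds because everything is an equality of maps into $P(\cdot,\cdot)$.
\end{enumerate}

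\textbf{From a monoid to a $\mathcal{V}$-pro.}
\begin{enumerate}
\item Given a monoid $(C,\mu,e)$ in $\mathcal{V}\mathbf{Cat}$ with object monoid $\mathbf{N}$, define
\[
\Gamma\colon C(\textstyle\sum m_i,k)\times\prod C(n_i,m_i)\to C(\textstyle\sum n_i,k)
\]
by first tensoring the $t$ factors using iterates of $\mu$ (well defined by strict associativity), and then composing in $C$.
\item The $\mathcal{V}$-pro axioms follow from functoriality of $\mu$ (interchange), strict associativity and unitality of $\mu$, and the category axioms.
\end{enumerate}

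\textbf{Mutual inverseness and morphisms.}
\begin{enumerate}
\item Starting from $C$, the rebuilt $\mu$ on homs is $1_{k+k'}\circ\mu(f,f')=\mu(f,f')$, so we recover $C$.
\item Starting from $P$, the rebuilt $\Gamma$ is $g\circ\Gamma(1,\dots;f_i)$. By associativity of $\Gamma$ this equals $\Gamma(g;f_i)$, so we recover $P$.
\item On morphisms, a map of $\mathcal{V}$-pros (componentwise maps commuting with $\Gamma$ and units) corresponds exactly to an identity-on-objects monoid map of $\mathcal{V}$-categories. This gives an isomorphism of categories.
\end{enumerate}

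The main obstacle is that the definition only says ``axioms of compatibility of units, unital composition, and associative composition'' without writing them out. The crucial step is therefore to pin down associativity for the multi-input $\Gamma$ precisely enough to derive the interchange law. I would state it in the standard operadic/McDermott--Uustalu form (composites of $\Gamma$ nested in either order agree, with block-wise regrouping of the indices) and then verify interchange as the special case where the outer operations are identities.
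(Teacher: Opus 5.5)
The paper gives no proof of this proposition (it is left to the reader). Your dictionary is certainly the intended one: composition from the $t=1$ part of $\Gamma$, tensor $f\otimes f'=\Gamma(1_{k+k'};f,f')$, and conversely $\Gamma(g;f_1,\ldots,f_t)=g\circ(f_1\otimes\cdots\otimes f_t)$. Interchange, preservation of identities by $\mu$, associativity of $\mu$, the mutual-inverse checks, and your restriction to identity-on-objects monoid maps all go through. That restriction is genuinely needed, since a monoid map may act as $n\mapsto cn$ on objects.

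The step that does not go through is the one you pass over: the unit laws $\mu\circ(e\otimes 1)=\lambda$ and $\mu\circ(1\otimes e)=\rho$. On homs these say $\Gamma(1_k;1_0,f)=f=\Gamma(1_k;f,1_0)$ for $f\in P(n,k)$. You attribute them to compatibility of units. But suppose $t\geq 1$, so associativity only ever regroups into nonempty blocks, and the unit axioms take the form the paper writes for deformations: $\Gamma(1_{\sum n_i};1_{n_1},\ldots,1_{n_t})=1_{\sum n_i}$ and $\Gamma(1_k;f)=f=\Gamma(f;1_n)$. Then the unit laws do not follow.

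Take $\mathcal{V}=\mathbf{Set}$, $P(n,n)=\{1,e\}$ with $e^2=e$, $P(n,k)=\emptyset$ for $n\neq k$, $\Gamma(g;f)=gf$, and $\Gamma(g;f_1,\ldots,f_t)=g$ for $t\geq 2$. All the unit axioms above hold. Associativity holds for every nonempty block decomposition: for outer $t\geq 2$ both sides equal $g$; for outer $t=1$ with a block of length at least $2$ both sides equal $gf_1$; otherwise it is associativity in $\{1,e\}$. Yet $\Gamma(1_k;e,1_0)=1_k\neq e$. Any monoid inducing this $\Gamma$ would have to have $f\otimes f'=\Gamma(1;f,f')$, so this structure comes from no monoid in $\mathbf{Cat}$, nor from any pro. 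Your assignment from $\mathcal{V}$-pros to monoids is therefore not defined on it.

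So the real content is not interchange, which as you say is associativity plus right unity. It is an axiom governing the object $0$, and you must build it into your precise list of $\mathcal{V}$-pro axioms. There are two ways to do this.
\begin{itemize}
\item Allow $t=0$, with $\Gamma(g;\,)=g$ for $g\in P(0,k)$, and require associativity also when an inner block is empty. Then $\Gamma(1_k;f,1_0)=\Gamma(1_k;\Gamma(1_k;f),\Gamma(1_0;\,))=\Gamma(\Gamma(1_k;1_k,1_0);f)=f$.
\item Add directly the axiom that inserting or deleting an argument $1_0$ in $\Gamma(g;f_1,\ldots,f_t)$ does not change its value.
\end{itemize}
This axiom holds in every monoid in $\mathcal{V}\mathbf{Cat}$ with object monoid $(\mathbf{N},+,0)$, because there $f\otimes 1_0=f$. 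Adding it therefore costs nothing in the converse direction, and with it in place the rest of your argument is complete.
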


\begin{example}
A $\mathbf{Set}$-pro is a pro in the standard sense.  
\end{example}


\begin{definition}
Let $(P,1,\Gamma)$ and $(P',1',\Gamma')$ be $\mathcal{V}$-pros. A map $f:P\to P'$ of $\mathcal{V}$-pros consists of a morphism $f_{n,k}:P(n,k)\to P'(n,k)$ for all $n,k\geq 0$. This data satisfies axioms for preservation of units and preservation of composition.
\end{definition}

We write $\mathcal{V}\mathbf{Pro}$ to denote the category of $\mathcal{V}$-pros.

\begin{proposition}
The underlying category of a $\mathcal{V}$-pro has the structure of a pro, i.e., the functor $U:\mathcal{V}\mathbf{Cat}\to\mathbf{Cat}$ lifts to a functor
$U:\mathcal{V}\mathbf{Pro}\to \mathbf{Pro}$. Moreover, $U$ has a left adjoint.
\end{proposition}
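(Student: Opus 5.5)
The plan is to work through the description of a $\mathcal{V}$-pro as a monoid in $\mathcal{V}\mathbf{Cat}$ with object set $\mathbf{N}$, stated in the preceding proposition, and transport this along the underlying category functor $(-)_0=\mathcal{V}(I,-)_*$. Since $\mathcal{V}$ is cartesian, $I=*$. The functor $\mathcal{V}(*,-)\colon\mathcal{V}\to\mathbf{Set}$ preserves products and is therefore strong monoidal with respect to the cartesian structures. Consequently the induced $2$-functor $(-)_0\colon\mathcal{V}\mathbf{Cat}\to\mathbf{Cat}$ preserves the tensor product defined earlier, up to the canonical isomorphism $(C\otimes D)_0\cong C_0\times D_0$, given on hom-sets by $\mathcal{V}(*,C(c,c')\times D(d,d'))\cong\mathcal{V}(*,C(c,c'))\times\mathcal{V}(*,D(d,d'))$. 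It also preserves the unit and leaves object sets unchanged.

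Because $(-)_0$ is strong monoidal, it sends monoids to monoids. Given a $\mathcal{V}$-pro $P$, viewed as a monoid $(P,\mu,\eta)$, the underlying category $P_0$ acquires a strict monoidal structure
\[
P_0\times P_0\cong (P\otimes P)_0\xrightarrow{\mu_0}P_0 .
\]
Its object monoid is still $(\mathbf{N},+,0)$, so $P_0$ is a pro. Concretely, on morphisms $f\colon *\to P(n,m)$ and $g\colon *\to P(n',m')$, the product $f\otimes g$ is obtained by composing $(f,g)$ with the monoidal action on homs; this action can be recovered from $\Gamma$ and the units. A map of $\mathcal{V}$-pros is a monoid map, so its image under $(-)_0$ is a strict monoidal functor fixing $1$. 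Hence we get $U\colon\mathcal{V}\mathbf{Pro}\to\mathbf{Pro}$ lifting $\mathcal{V}\mathbf{Cat}\to\mathbf{Cat}$, and functoriality is inherited from $(-)_0$.

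For the left adjoint, I would use the functor $D_*\colon\mathbf{Cat}\to\mathcal{V}\mathbf{Cat}$, which Proposition \ref{copies_functor} shows is left adjoint to $(-)_0$. The point is that $D(A)=A\cdot *$ is strong monoidal. Indeed, $D(A\times B)=\coprod_{A\times B}*\cong(\coprod_A *)\times(\coprod_B *)$, because products distribute over coproducts in a cartesian closed category: $-\times X$ is a left adjoint. Thus $D_*$ is strong monoidal with respect to $\otimes$ and also sends pros to $\mathcal{V}$-pros. Finally, I would check that the unit and counit of $D_*\dashv(-)_0$ are monoidal natural transformations, so that the adjunction lifts to the categories of monoids with fixed object set $\mathbf{N}$. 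This is the doctrinal-adjunction observation: a strong monoidal left adjoint has a canonically monoidal right adjoint, and then the adjunction lifts to monoids.

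The main obstacle is this last verification. I need the lax structure on $(-)_0$ induced by doctrinal adjunction to agree with its evident strong structure. I also need the adjunction to restrict to monoid objects whose object monoid is exactly $(\mathbf{N},+,0)$, which holds because both functors are identity on objects. I would settle the first point by checking that both structure maps are identities on objects and are induced on homs by the comparison $\mathcal{V}(*,X\times Y)\cong\mathcal{V}(*,X)\times\mathcal{V}(*,Y)$. Those comparisons are mates of the isomorphisms $D(A\times B)\cong DA\times DB$ under the adjunction $D\dashv\mathcal{V}(*,-)$. The triangle identities then pass to the monoid level without further work.
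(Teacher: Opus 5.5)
Your argument is correct, but it takes a different route from the paper's.

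\textbf{The paper's route.} The paper leaves the lifting $U\colon\mathcal{V}\mathbf{Pro}\to\mathbf{Pro}$ as an exercise and builds the left adjoint by hand. It sets $D(P)(n,m)=\coprod_{f\in P(n,m)}*$ and defines the $\Gamma$-operations on $D(P)$ directly, using the distributivity isomorphism followed by the coproduct injection $\iota_{g\circ\sum_i f_i}$. It then writes down the unit $\eta^P(f)=\iota_f$ and counit $\epsilon^P=\langle f\rangle_{f\in U(P)(n,m)}$. The $\mathcal{V}$-pro axioms, naturality and triangle identities are asserted rather than checked.

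\textbf{Your route.} You identify $\mathcal{V}$-pros with monoids in $(\mathcal{V}\mathbf{Cat},\otimes)$ on the object monoid $(\mathbf{N},+,0)$. You note that $(-)_0$ and $D_*$ are both strong monoidal; for $D_*$ this holds because $-\times X$ preserves coproducts in a cartesian closed $\mathcal{V}$. You then lift the adjunction $D_*\dashv(-)_0$ of Proposition~\ref{copies_functor} to monoids by doctrinal adjunction.

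\textbf{Comparison.} You end up with literally the paper's $D(P)$, with the same unit and counit. Your approach also proves the first half of the statement, and the axiom checks come from general principles. The cost is that it rests on the monoid characterization of $\mathcal{V}$-pros, which the paper only asserts. The paper's construction works directly with the $\Gamma$-presentation and does not need that identification.

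\textbf{Your compatibility worry.} It is resolved as you say. The mate of $D(A\times B)\cong DA\times DB$ under $D\dashv\mathcal{V}(*,-)$ acts by $(x,y)\mapsto\iota_{(x,y)}\mapsto(\iota_x,\iota_y)\mapsto(x,y)$, so it is the canonical bijection. Alternatively, you can skip doctrinal adjunction: check directly that the unit $C\to(D_*C)_0$ and counit $D_*(E_0)\to E$, both identity on objects, are monoidal transformations for the evident strong structures.

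\textbf{Restriction to maps fixing $1$.} Morphisms of pros and $\mathcal{V}$-pros form non-full subcategories of the monoid categories. Because the unit and counit are identity on objects, the hom-bijection preserves object functions. That is what lets the lifted adjunction restrict to these non-full subcategories.
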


\begin{proof}
The first part of the proposition is left to the reader as an exercise. We proceed with proving that $U$ has a left adjoint.

Let $P$ be a pro. Define $D(P)(n,m)=\coprod_{f\in P(n,m)}*$ for all $n,m\geq 0$ and define $1_k:*\to D(P)(1,1)$ as $1_k=\iota_{\text{id}_k}$ for all $k\geq 0$. Let $t\geq 1$ and $n_1,\dots, n_t,m_1,\dots, m_t,k\geq 0$. Define the composition map as the following composite.
\[
\begin{tikzpicture}[node distance=2cm]
    \node (A) {$D(P)(\sum_{i=1}^tm_i,k)\times\prod_{i=1}^tD(P)(n_i,m_i)$};
    \node (B)[below of=A]{$\coprod_{g\in P(\sum_{i=1}^tm_i,k)}*\times \prod_{i=1}^t\coprod_{f_i\in P(n_i,m_i)}*$};
    \node (C) [below of=B]{$\coprod_{g\in P(\sum_{i=1}^tm_i,k)}\coprod_{f_1\in P(n_1,m_1)}\dots \coprod_{f_t\in P(n_t,m_t)}(*\times\prod_{i=1}^t*)$};
    \node (D)[below of=C]{$\coprod_{g\in P(\sum_{i=1}^tm_i,k)}\coprod_{f_1\in P(n_1,m_1)}\dots \coprod_{f_t\in P(n_t,m_t)}*$};
    \node (E)[below of=D]{$D(P)(\sum_{i=1}^tn_i,k)$};
    \draw[->](A) to node[left=3]{$=$}(B);
    \draw[->](B) to node[left=3]{$\cong$}(C);
    \draw[->](C) to node[left=3]{$\cong$}(D);
    \draw[->](D) to node[left=3]{$\iota_{g\circ\sum_{i=1}^tf_i}$}(E);
\end{tikzpicture}
\]
The first isomorphism is the distributivity isomorphism induced from $\mathcal{V}$ being cartesian closed. The second isomorphism is a composite of unitor isomorphisms for $\mathcal{V}$. The final map is a structure map for the coproducts. The triple $(D(P),1,\Gamma)$ determines a $\mathcal{V}$-pro. Moreover, this upgrades to a functor $D:\mathbf{Pro}\to\mathcal{V}\mathbf{Pro}$.

Let $P$ be a pro. Define a map $\eta^P:P\to U(D(P))$ of pros by
\[
\eta^P_{n,m}(f)=\iota_f.
\]
The maps provide a natural transformation
\[
\eta:1_{\mathbf{Pro}}\Rightarrow U\circ D.
\]

Let $P$ be a $\mathcal{V}$-pro. Define $\epsilon^P:D(U(P))\to P$ by
\[
\epsilon^P_{n,m}=\langle f\rangle_{f\in U(P)(n,m)}
\]
where $\langle f\rangle_{f\in U(P)(n,m)}$ is induced by the universal property of the coproduct. The maps provide a natural transformation
\[
\epsilon:D\circ U\Rightarrow 1_{\mathcal{V}\mathbf{Pro}}.
\]
This data constitutes our desired adjunction.
\end{proof}

We now briefly define the initial object of $\mathcal{V}\mathbf{Pro}$, which we choose to denote by $\mathbf{N}_{\mathcal{V}}$. The $\mathcal{V}$-pro $\mathbf{N}_\mathcal{V}$ has hom-object 
\[
\mathbf{N}_\mathcal{V}(n,m)=\begin{cases}
    *\quad\text{if } $n=m$\\
    \emptyset\quad{otherwise.}
\end{cases}
\]
The definition of $\mathcal{V}$-pro guarantees the existence of a unique map $!:\mathbf{N}_\mathcal{V}\to P$ for every $\mathcal{V}$-pro $P$.

\begin{notation}\label{und_V_sig_fun}
  Let $P:=(P,1,\Gamma)$ be a $\mathcal{V}$-pro. The \emph{underlying $\mathcal{V}$-signature} functor, which we denote by $U:\mathcal{V}\mathbf{Pro}\to\mathcal{V}\mathbf{Sig}$, is defined by
\[
U(P)_{n,m}=P(n,m).
\]  
\end{notation}

\begin{lemma}\label{left_adjoint_for_V_Pro_exists}
 The functor $U$ of Notation \ref{und_V_sig_fun} has a left adjoint.
\end{lemma}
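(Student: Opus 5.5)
The most economical route is to argue abstractly, as the paper does for $\mathbf{Pro}$ and $\mathbf{Law}$. An explicit free construction would also work, but it is messier. I will show that $U:\mathcal{V}\mathbf{Pro}\to\mathcal{V}\mathbf{Sig}$ preserves all limits and all $\lambda$-filtered colimits for some regular cardinal $\lambda$. I will also show that both categories are locally presentable. The left adjoint then exists by the adjoint functor theorem for accessible functors between locally presentable categories (Theorem 1.66 of \cite{AdRo}).

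For local presentability, $\mathcal{V}\mathbf{Sig}=[\mathbf{N}^2,\mathcal{V}]$ is locally presentable because $\mathcal{V}$ is. For $\mathcal{V}\mathbf{Pro}$, I would use the earlier proposition: a $\mathcal{V}$-pro is a monoid in $\mathcal{V}\mathbf{Cat}$ with object set $\mathbf{N}$. Equivalently, it is a $\mathcal{V}$-signature carrying the unit and composition maps $1_k$ and $\Gamma$, subject to equational axioms. The operations $\Gamma$ take finite products as input, and finite products in $\mathcal{V}$ commute with $\lambda$-filtered colimits once $\lambda$ is large. Hence $\mathcal{V}\mathbf{Pro}$ is the category of algebras for an accessible monad on $\mathcal{V}\mathbf{Sig}$. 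More concretely, it is a full reflective, accessibly embedded subcategory cut out of a category of structured objects, and so it is locally presentable. Alternatively, I can present it as the category of models of a limit sketch internal to $\mathcal{V}$, and apply the same reasoning the paper used for $\mathbf{Sig}$.

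The two preservation properties are then checked by hand. Limits in $\mathcal{V}\mathbf{Pro}$ are computed pointwise in each hom-object $P(n,k)$. Since products commute with limits, the maps $\Gamma$ and $1_k$ are induced componentwise and the axioms hold componentwise, so $U$ creates limits. For $\lambda$-filtered colimits, I also compute pointwise. The product $P(\sum m_i,k)\times\prod_i P(n_i,m_i)$ is a finite product, so its $\lambda$-filtered colimit is the product of the colimits. This lets the maps $\Gamma$ descend to the colimit, and the axioms hold because they hold on each stage of the diagram. Thus $U$ preserves (indeed creates) these colimits.

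The main obstacle I expect is the commutation of finite products with $\lambda$-filtered colimits in $\mathcal{V}$. In a general locally presentable category this holds only for sufficiently large $\lambda$, and the $\lambda$ must be chosen uniformly. I would handle this with the standard fact that in a locally $\mu$-presentable category, $\lambda$-filtered colimits commute with $\lambda$-small limits for $\lambda\ge\mu$ sufficiently large; the finite products needed here are $\lambda$-small. If that citation proves awkward, the fallback is an explicit transfinite free construction. It would start from $\mathbf{N}_{\mathcal{V}}$ and the signature $\chi$, form iterated formal composites $\coprod \chi(\cdot)\times\prod\chi(\cdot)$ using distributivity in the cartesian closed $\mathcal{V}$ as in the proof for $D$ above, and then quotient by coequalizers enforcing unit and associativity. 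I would then verify the universal property directly.
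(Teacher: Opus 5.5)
Your proposal is the paper's argument: $\mathcal{V}\mathbf{Sig}$ and $\mathcal{V}\mathbf{Pro}$ are locally presentable, pointwise computation shows $U$ preserves limits and filtered colimits, and the adjoint functor theorem for locally presentable categories gives the left adjoint; you fill in details the paper leaves as ``a quick check.'' Two small remarks. First, since $\mathcal{V}$ is cartesian closed, $-\times X$ preserves colimits and the diagonal $I\to I\times I$ of a filtered category $I$ is final, so finite products commute with all filtered colimits and no enlargement of $\lambda$ is needed. Second, for the local presentability of $\mathcal{V}\mathbf{Pro}$, rely on the description as algebras for an accessible endofunctor on $\mathcal{V}\mathbf{Sig}$ cut down by equations, since calling it the category of algebras for a monad on $\mathcal{V}\mathbf{Sig}$ already presupposes the left adjoint you are constructing.
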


\begin{proof}
Since $\mathcal{V}$ is locally presentable, then $\mathcal{V}\mathbf{Sig}$ and $\mathcal{V}\mathbf{Pro}$ are. A quick check shows that $U$ preserves limits and filtered colimits. Therefore, $U$ has a left adjoint by the Adjoint Functor Theorem for locally presentable categories.
\end{proof}

 We shall call the left adjoint $\mathbf{N}_{\mathcal{V}}[-]$. We assigned this naming convention with the naming convention of the initial object. 
 
 \begin{remark}
     We remark that it is possible to construct the left adjoint when $\mathcal{V}$ is not locally presentable. The prime example that comes to mind is $\mathbf{Top}$. 
 \end{remark}

\begin{construction}
Let $C$ be a $\mathcal{V}$-category and $x$ be an object of $C$ with a choice of all finite iterated enriched products of $x$ with itself, denoted by $x^n$ for all $n\geq 0$. Define $\mathbf{End}_C(x)$ to have object set $\mathbf{N}$ and the hom-object to be defined by $\mathbf{End}_C(x)(n,m)=C(x^n,x^m)$ for all $n,m\geq 0$. As we allude to in the next subsection, $\mathbf{End}_C(x)$ maybe equipped with the structure of a Lawvere $\mathcal{V}$-theory $(\mathbf{End}_C(x),J_x)$ and therefore has a $\mathcal{V}$-pro structure. Additionally, we obtain a $\mathcal{V}$-functor $i_x:\mathbf{End}_C(x)\to C$ that preserves finite enriched products.
\end{construction}

\begin{definition}
Let $C$ be a $\mathcal{V}$-category, $x$ be an object of $C$ with a choice of all finite iterated enriched products of $x$ with itself, and $P$ be a $\mathcal{V}$-pro. An \emph{algebra structure} on $x$ consists of a map of pros $A:P\to \mathbf{End}_C(x)$. An \emph{algebra} of $P$ in $C$ consists of an object $x$ with a choice of all finite iterated enriched products of $x$ with itself together with an algebra structure $\phi:P\to \mathbf{End}_C(x)$ on $x$.
\end{definition}

\begin{definition}
Let $C$ be a $\mathcal{V}$-category and $(x,\phi)$ and $(y,\lambda)$ be algebras of $P$ in $C$. A morphism $f:(x,\phi)\to(y,\lambda)$ of algebras consists of a morphism $f:*\to C(x,y)$ such that the following diagram commutes.
\[
\begin{tikzpicture}[node distance=3cm]
    \node (A) {$P(n,m)$};
    \node (B)[right of=A]{$C(x^n,x^m)$};
    \node (C)[below of=A]{$C(y^n,y^m)$};
    \node (D)[right of=C]{$C(x^n,y^m)$};
    \draw[->](A) to node[above=3]{$\phi_{n,m}$}(B);
    \draw[->](A) to node[left=3]{$\lambda_{n,m}$}(C);
    \draw[->](B) to node[right=3]{$(f^m)^*$}(D);
    \draw[->](C) to node[below=3]{$f^n_*$}(D);
\end{tikzpicture}
\]
\end{definition}

We write $\mathbf{Alg}(P,C)$ to denote the category of algebras of $P$ in $C$. The composition of morphisms and identities are obtained from the underlying category of $C$.

\subsection*{Enriched Lawvere theories}

\begin{definition}
Define $\mathcal{F}_V$ to be the opposite $\mathcal{V}$-category of the $\mathcal{V}$-category with object set $\mathbf{N}$ and whose hom-objects are defined by 
\[
\mathcal{F}^\text{op}_\mathcal{V}(n,m)=[n_\mathcal{V},m_{\mathcal{V}}],
\]
where $[-,-]$ is the internal hom of $\mathcal{V}$.
\end{definition}

\begin{lemma}
For all $n\geq 0$, $n$ is the strict $n$-fold product of $1$ in $\mathcal{F}_\mathcal{V}$, so that  $\mathcal{F}_\mathcal{V}$ is a cartesian monoidal $\mathcal{V}$-category with strict products.
\end{lemma}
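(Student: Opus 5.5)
We have $\mathcal{F}_\mathcal{V}(n,m)=\mathcal{F}^{\mathrm{op}}_\mathcal{V}(m,n)=[m_\mathcal{V},n_\mathcal{V}]$. The plan is to show that the hom-object $[m_\mathcal{V},n_\mathcal{V}]$ is $\mathcal{V}$-naturally isomorphic to $\prod_{i=1}^n[m_\mathcal{V},1_\mathcal{V}]$. This exhibits $n$, with the $n$ coproduct injections $\iota_i:1_\mathcal{V}\to n_\mathcal{V}$ read as projections in $\mathcal{F}_\mathcal{V}$, as an enriched $n$-fold product of $1$.

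First, since $\mathcal{V}$ is cartesian closed, the internal hom $[-,b]$ sends coproducts in $\mathcal{V}$ to products. This holds because $a\times-$ preserves colimits, exactly as in the third isomorphism in the proof of Lemma \ref{ind_fun_iso_to_prod}. Writing $n_\mathcal{V}=\coprod_{i=1}^n 1_\mathcal{V}$, I get
\[
\mathcal{F}_\mathcal{V}(m,n)=[m_\mathcal{V},\textstyle\coprod_{i=1}^n I]\quad\text{versus}\quad \mathcal{F}_\mathcal{V}(-,n)=\mathcal{F}^{\mathrm{op}}_\mathcal{V}(n,-)=[n_\mathcal{V},(-)_\mathcal{V}].
\]
I need to be careful about variance here. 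The representable relevant for a product in $\mathcal{F}_\mathcal{V}$ is
\[
\mathcal{F}_\mathcal{V}(k,n)=\mathcal{F}^{\mathrm{op}}_\mathcal{V}(n,k)=[n_\mathcal{V},k_\mathcal{V}].
\]
This is where coproducts in the first variable enter, so
\[
[n_\mathcal{V},k_\mathcal{V}]=[\textstyle\coprod_{i=1}^n I,k_\mathcal{V}]\cong\prod_{i=1}^n[I,k_\mathcal{V}]=\prod_{i=1}^n\mathcal{F}_\mathcal{V}(k,1),
\]
and the variance works out.

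Second, I must check that this family of isomorphisms is $\mathcal{V}$-natural in $k$. The components are precomposition with the injections $\iota_i$, which I realize as points $\iota_i:*\to\mathcal{F}_\mathcal{V}(n,1)=[I,n_\mathcal{V}]$. The isomorphism is then the pairing of the post-composition morphisms $(\iota_i)_*$ from the Notation on pre- and post-composition. $\mathcal{V}$-naturality follows from associativity of the enriched composition of $\underline{\mathcal{V}}$. Concretely, both sides of the naturality square are transposes of the same map built from evaluation. This is the step I expect to take the most care, because one must track the internal-hom composition through the product decomposition. I would verify it on transposes using the universal property of the coproduct $n_\mathcal{V}$.

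Third, strictness and the cartesian monoidal structure. The chosen product of $n$ and $m$ is the object $n+m$. This uses $(n+m)_\mathcal{V}=n_\mathcal{V}\sqcup m_\mathcal{V}$, which holds on the nose by choosing iterated coproducts of $I$ compatibly. Since objects are natural numbers and $n\times m:=n+m$, products are strictly associative and unital on objects, so the underlying product in $(\mathcal{F}_\mathcal{V})_0$ is strict. The object $0$ is terminal: $\mathcal{F}_\mathcal{V}(k,0)=[0_\mathcal{V},k_\mathcal{V}]=[\emptyset,k_\mathcal{V}]\cong *$, because $[-,b]$ sends the initial object to the terminal object. This supplies the unique morphism $*\to\mathcal{F}_\mathcal{V}(k,0)$ required in the definition of a cartesian monoidal $\mathcal{V}$-category.
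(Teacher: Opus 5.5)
Your proposal is correct and follows the paper's route. The paper's proof only cites Lemma~\ref{ind_fun_iso_to_prod}, $[n_\mathcal{V},b]\cong b^n$, and that is exactly your observation that $[-,k_\mathcal{V}]$ turns the coproduct $n_\mathcal{V}$ into a product, so $\mathcal{F}_\mathcal{V}(k,n)=[n_\mathcal{V},k_\mathcal{V}]\cong\prod_{i=1}^n\mathcal{F}_\mathcal{V}(k,1)$; your $\mathcal{V}$-naturality, strictness and terminal-object checks fill in details the paper leaves implicit. Delete your opening ``plan'' sentence and first display: they have the variance backwards, since in general $[m_\mathcal{V},n_\mathcal{V}]\not\cong\prod_{i=1}^n[m_\mathcal{V},1_\mathcal{V}]$ (take $\mathcal{V}=\mathbf{Set}$), and the corrected computation that follows is the one that works.
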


\begin{proof}
This follows directly from Lemma \ref{ind_fun_iso_to_prod}.  
\end{proof}

\begin{definition}
    A \emph{Lawvere $\mathcal{V}$-theory} consists of a $\mathcal{V}$-category $L$ together with a bijective-on-objects $\mathcal{V}$-functor $J:\mathcal{F}_\mathcal{V}\to L$ that preserves products.
\end{definition}

\begin{example}
A Lawvere $\mathbf{Cat}$-theory is a Lawvere $2$-theory.
\end{example}

\begin{example}
Let $C$ be a $\mathcal{V}$-category and $x$ be an object of $C$ with a choice of all finite iterated enriched products of $x$ with itself, denoted by $x^n$ for all $n\geq 0$. Then $\mathbf{End}_C(x)$ is a Lawvere $\mathcal{V}$-theory.
\end{example}

\begin{definition}
Let $L$ and $L'$ be Lawvere $\mathcal{V}$-theories. A map of Lawvere $\mathcal{V}$-theories from $(L,J)$ to $(L',J')$ consists of an enriched product-preserving $\mathcal{V}$-functor $F:L\to L'$  such that the following diagram commutes. 
    \[
    \begin{tikzpicture}[node distance=3cm]
        \node (A) {$\mathcal{F}_\mathcal{V}$};
        \node (B)[right of=A]{$L$};
        \node (C)[below of=B]{$L'$};
        \draw[->](A) to node[above] {$J$} (B);
        \draw[->](A) to node[left] {$J'$} (C);
        \draw[->](B) to node[right]{$F$}(C);
    \end{tikzpicture}
    \]
    
\end{definition}

We write $\mathcal{V}\mathbf{Law}$ to denote the category of Lawvere $\mathcal{V}$-theories.

Let $(L,J)$ be a Lawvere $\mathcal{V}$-theory. Define $U(L,J)=(L,1,\Gamma)$. We now define the structure maps. Define $1_k:*\to L(k,k)$ by $1_k:=j_k$ for all $k\geq 0$. Let $t\geq 1$ and $n_1,m_1,\dots n_t,m_t,k\geq 0$. We first define a map $\pi'_j:*\to L(\sum_{i=1}^tn_i,n_j)$ as the following composite for all $1\leq j\leq t$.
\[
*\xrightarrow{j_{\sum_{i=1}^tn_i}}L(\sum_{i=1}^tn_i,\sum_{i=1}^tn_i)\cong \prod_{i=1}^tL(\sum_{i=1}^tn_i,n_i)\xrightarrow{\pi_j}L(\sum_{i=1}^tn_i,n_j)
\]
Furthermore, define a morphism $\phi_j:\prod_{i=1}^tL(n_i,m_i)\to L(\sum_{i=1}^tn_i,m_j)$ as the following composite for all $1\leq j\leq t$.
\[
\prod_{i=1}^tL(n_i,m_i)\xrightarrow{\pi_j}L(n_j,m_j)\xrightarrow{(\pi'_j)_*}L(\sum_{i=1}^tn_i,m_j)
\]
By applying the universal property of the product and the structure map for enriched products, we obtain a morphism 
\[
+:\prod_{i=1}^tL(n_i,m_i)\to L(\sum_{i=1}^tn_i,\sum_{i=1}^tm_i).
\]
Now define 
\[
\Gamma:L(\sum_{i=1}^tm_i,k)\times\prod_{i=1}^tL(n_i,m_i)\to L(\sum_{i=1}^tn_i,k)
\]
as the following composite.
\[
L(\sum_{i=1}^tm_i,k)\times\prod_{i=1}^tL(n_i,m_i)\xrightarrow{1\times +}L(\sum_{i=1}^tm_i,k)\times L(\sum_{i=1}^tn_i,\sum_{i=1}^tm_i)\xrightarrow{\circ}L(\sum_{i=1}^tn_i,k)
\]
The data $U(L,J)$ is a $\mathcal{V}$-pro. This gives rise to a functor $U:\mathcal{V}\mathbf{Law}\to\mathcal{V}\mathbf{Pro}$.
 
\begin{proposition}
The functor $U:\mathcal{V}\mathbf{Law}\to\mathcal{V}\mathbf{Pro}$ has a left adjoint.
\end{proposition}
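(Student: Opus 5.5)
The strategy mirrors Lemma \ref{left_adjoint_for_V_Pro_exists}: show that $\mathcal{V}\mathbf{Law}$ and $\mathcal{V}\mathbf{Pro}$ are locally presentable and that $U$ preserves limits and $\lambda$-filtered colimits for some regular cardinal $\lambda$. The adjoint functor theorem for locally presentable categories then produces the left adjoint. We already know from Lemma \ref{left_adjoint_for_V_Pro_exists} that $\mathcal{V}\mathbf{Pro}$ is locally presentable, so the remaining work concerns $\mathcal{V}\mathbf{Law}$ and the behaviour of $U$.

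\textbf{Step 1: $\mathcal{V}\mathbf{Law}$ is locally presentable.} A Lawvere $\mathcal{V}$-theory $(L,J)$ has object set $\mathbf{N}$, since $J$ is bijective on objects. Its data is therefore a family of hom-objects $L(n,m)$ in $\mathcal{V}$, composition and unit maps, and the map $J$ on hom-objects. The product-preservation condition says that certain canonical comparison maps
\[
L(n,m)\to\prod_{i=1}^m L(n,1)
\]
are isomorphisms. All of these conditions are limit-sketch conditions internal to $\mathcal{V}$. Concretely, $\mathcal{V}\mathbf{Law}$ is the full subcategory of the coslice $\mathcal{F}_\mathcal{V}/\mathcal{V}\mathbf{Cat}_{\mathbf{N}}$ on those objects that are orthogonal to a small set of maps, namely those encoding invertibility of the product comparison maps. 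Here $\mathcal{V}\mathbf{Cat}_{\mathbf{N}}$ denotes the $\mathcal{V}$-categories with object set $\mathbf{N}$ and identity-on-objects functors. The category $\mathcal{V}\mathbf{Cat}_{\mathbf{N}}$ is locally presentable, being monadic over $[\mathbf{N}^2,\mathcal{V}]$ via a finitary-in-$\mathcal{V}$ monad. Coslices and small orthogonality classes of locally presentable categories are again locally presentable, by Adámek--Rosický.

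\textbf{Step 2: $U$ preserves limits and $\lambda$-filtered colimits.} Both forgetful functors to $[\mathbf{N}^2,\mathcal{V}]$, namely from $\mathcal{V}\mathbf{Law}$ and from $\mathcal{V}\mathbf{Pro}$, create limits, because limits are computed homwise. The same holds for $\lambda$-filtered colimits once $\lambda$ is large enough that $\lambda$-filtered colimits in $\mathcal{V}$ commute with the finite products appearing in composition, in $\Gamma$, and in the product comparison maps. Such a $\lambda$ exists because $\mathcal{V}$ is locally presentable. Limits and $\lambda$-filtered colimits are thus computed on underlying $\mathcal{V}$-signatures in both categories. Since $U$ commutes with the two forgetful functors to signatures, and the structure maps $+$ and $\Gamma$ are induced naturally from composition and the projections, $U$ preserves both kinds of (co)limit.

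\textbf{Main obstacle.} I expect the main obstacle to be the careful verification that limits in $\mathcal{V}\mathbf{Law}$ are really computed homwise. This requires checking that a limit of product-preserving $J$'s is again product-preserving and bijective on objects. It also requires choosing $\lambda$ so that filtered colimits preserve the invertibility of the product comparison isomorphisms. Once this is done, the adjoint functor theorem applies directly.

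As an alternative, more explicit route, the left adjoint could be constructed by hand. One would apply the left adjoint $\mathbf{N}_\mathcal{V}[-]$ and then freely adjoin projections and diagonals by a pushout along $\mathcal{F}_\mathcal{V}$. This generalizes the set-based factorization of $\mathbf{Fr}$ through $\mathbf{N}[-]$ described earlier in the paper. I would, however, present the adjoint functor theorem argument as the proof.
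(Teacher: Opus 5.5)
Your proposal is correct and follows the paper's route: $\mathcal{V}\mathbf{Law}$ and $\mathcal{V}\mathbf{Pro}$ are locally presentable, $U$ preserves limits and filtered colimits because both are computed homwise, and the adjoint functor theorem for locally presentable categories gives the left adjoint. Your orthogonality-class description of $\mathcal{V}\mathbf{Law}$ inside the coslice under $\mathcal{F}_\mathcal{V}$, and your homwise verification, supply detail that the paper's three-line proof leaves implicit. (Since $\mathcal{V}$ is cartesian closed, ordinary filtered colimits already commute with finite products, so you may take $\lambda=\aleph_0$, as the paper implicitly does.)
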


\begin{proof}
Since $\mathcal{V}$ is a locally presentable cartesian closed category, $\mathcal{V}\mathbf{Pro}$ and $\mathcal{V}\mathbf{Law}$ are locally presentable. The forgetful functor $U$ preserves limits and filtered colimits. Therefore $U$ has a left adjoint.
\end{proof}

We shall write $L_{(-)}^\mathcal{V}:\mathcal{V}\mathbf{Pro}\to\mathcal{V}\mathbf{Law}$ for the left adjoint.

\begin{proposition}
The underlying category of a Lawvere $\mathcal{V}$-theory has the structure of a Lawvere theory, i.e., the functor $U:\mathcal{V}\mathbf{Pro}\to\mathbf{Pro}$ lifts to a functor $U:\mathcal{V}\mathbf{Law}\to\mathbf{Law}$.  If the functor of Lemma \ref{copies_functor} is strong monoidal, then $U$ has a left adjoint.
\end{proposition}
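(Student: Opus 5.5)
Two things need proving: that the underlying category of a Lawvere $\mathcal{V}$-theory is a Lawvere theory, and that under strong monoidality of $D$ the resulting forgetful functor has a left adjoint.

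For the lift, I take $(L,J)$ in $\mathcal{V}\mathbf{Law}$ and form $U(L,J)_0$, the underlying category of the $\mathcal{V}$-category $L$. The underlying functor $J_0:(\mathcal{F}_\mathcal{V})_0\to L_0$ is bijective on objects because $J$ is. It preserves finite products because, by the lemma above, enriched products induce products in the underlying category. I then identify $(\mathcal{F}_\mathcal{V})_0$ with $\mathcal{F}$. On hom-sets,
\[
\mathcal{V}(*,[m_\mathcal{V},n_\mathcal{V}])\cong\mathcal{V}(m_\mathcal{V},n_\mathcal{V}),
\]
and I would argue this is $\mathbf{Set}(n,m)$, compatibly with composition. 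A cleaner route is to use only that $(\mathcal{F}_\mathcal{V})_0$ has strict finite products of $1$, so the unique product-preserving functor $\mathcal{F}\to(\mathcal{F}_\mathcal{V})_0$ sending $1\mapsto 1$ exists. Composing with $J_0$ gives a strict product-preserving, identity-on-objects functor $\mathcal{F}\to L_0$, i.e.\ a Lawvere theory. A $\mathcal{V}$-Lawvere map $F$ commutes with the $J$'s, so $F_0$ commutes with the induced maps out of $\mathcal{F}$. This gives functoriality, and compatibility with the already constructed $U:\mathcal{V}\mathbf{Pro}\to\mathbf{Pro}$ is immediate since both take underlying hom-sets.

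For the left adjoint, I would mimic the proof of the pro case. Given a Lawvere theory $(T,K)$, I set $D_*(T)$ using Proposition \ref{copies_functor}, with the same objects and $D_*(T)(n,m)=D(T(n,m))=\coprod_{T(n,m)}*$. Strong monoidality of $D$ is exactly what I need here. It gives
\[
D_*(T)(n,m)\cong D\bigl(\textstyle\prod_i T(n,m_i)\bigr)\cong \textstyle\prod_i D(T(n,m_i)),
\]
so that products in $T$ become enriched products in $D_*(T)$; without strong monoidality only a comparison map exists. Next I need the structure map $J:\mathcal{F}_\mathcal{V}\to D_*(T)$. I would construct it as $D_*(K)$ precomposed with a comparison $\mathcal{F}_\mathcal{V}\to D_*(\mathcal{F})$. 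On hom-objects this is a map $[m_\mathcal{V},n_\mathcal{V}]\to\coprod_{\mathbf{Set}(n,m)}*$, and strong monoidality identifies the target with $[D(m),D(n)]$-type data. Alternatively, and more robustly, I characterise the left adjoint abstractly. Both $\mathcal{V}\mathbf{Law}$ and $\mathbf{Law}$ are locally presentable, and $U$ preserves limits and filtered colimits, since the underlying-hom-set functor $\mathcal{V}(*,-)$ does and limits and filtered colimits of theories are computed homwise. The adjoint functor theorem then applies, as in Lemma \ref{left_adjoint_for_V_Pro_exists}.

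The main obstacle is building the structure functor $J:\mathcal{F}_\mathcal{V}\to D_*(T)$ directly, which requires comparing $[m_\mathcal{V},n_\mathcal{V}]$ with $D(\mathbf{Set}(n,m))$. The hypothesis on $D$ is what makes this, and the preservation of enriched products, work. I would first try the explicit construction, with unit $\eta_T(f)=\iota_f$ and counit induced by the coproduct universal property exactly as in the pro case, checking the triangle identities homwise. I would fall back on the adjoint functor theorem argument if the explicit $J$ proves awkward.
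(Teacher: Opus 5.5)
The paper states this proposition without proof, so there is no argument of its own to match. The closest analogues are the explicit construction $D(P)(n,m)=\coprod_{P(n,m)}*$ for $\mathcal{V}\mathbf{Pro}\to\mathbf{Pro}$ and the adjoint-functor-theorem argument for $\mathcal{V}\mathbf{Law}\to\mathcal{V}\mathbf{Pro}$, and your two routes mirror exactly these. The plan is sound, but as written neither version of the left-adjoint argument is complete, and one aside is false.

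\textbf{The aside.} The identification $(\mathcal{F}_\mathcal{V})_0\cong\mathcal{F}$ fails in general. We have $(\mathcal{F}_\mathcal{V})_0(n,m)\cong\mathcal{V}(m_\mathcal{V},n_\mathcal{V})\cong\mathcal{V}(*,n_\mathcal{V})^m$, and $\mathcal{V}(*,\coprod_n *)$ need not have $n$ elements. For $\mathcal{V}=\mathbf{Set}\times\mathbf{Set}$ one gets $\mathbf{Set}(m,n)^2$. So your ``cleaner route'', via the unique strict product-preserving functor $\mathcal{F}\to(\mathcal{F}_\mathcal{V})_0$ with $1\mapsto 1$, is not optional; it is the actual argument, and it works.

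\textbf{The explicit construction.} The step you call the main obstacle is easy, and it is what makes the explicit left adjoint work. Since $\mathcal{V}$ is cartesian closed, $D$ is automatically strong monoidal, by the same distributivity isomorphism the paper uses to build $D(P)$. Hence
\[
\mathcal{F}_\mathcal{V}(n,m)=[m_\mathcal{V},n_\mathcal{V}]\cong\prod_{j=1}^{m}[*,n_\mathcal{V}]\cong D(n)^{m}\cong D\bigl(\mathbf{Set}(m,n)\bigr)=D\bigl(\mathcal{F}(n,m)\bigr).
\]
Under these isomorphisms the summand $\iota_\phi$ corresponds to the name of $D(\phi)\colon m_\mathcal{V}\to n_\mathcal{V}$. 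So composition and units are respected, and you get an isomorphism of $\mathcal{V}$-categories $\theta\colon\mathcal{F}_\mathcal{V}\cong D_*(\mathcal{F})$; put $J=D_*(K)\circ\theta$.

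The universal property is then formal from $D_*\dashv(-)_0$ (Proposition~\ref{copies_functor}); write $u$ for its unit. An identity-on-objects $G\colon D_*(T)\to L$ corresponds to $G^\flat=G_0\circ u_T\colon T\to L_0$. The condition $G\circ J=J_L$ transposes to $G^\flat\circ K=(J_L)_0\circ(\theta^{-1})_0\circ u_{\mathcal{F}}$. The right-hand side is exactly your structure map on $L_0$: the functor $(\theta^{-1})_0\circ u_{\mathcal{F}}$ is strict product preserving with $1\mapsto 1$, since it sends the projection $\pi_j\in\mathcal{F}(m,1)$ to the name of $\iota_j\colon *\to m_\mathcal{V}$. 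Product preservation of $G$ follows from compatibility with the $J$'s. Your unit $f\mapsto\iota_f$ and coproduct-induced counit are then the right ones.

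\textbf{The fallback.} The claim that $\mathcal{V}(*,-)$ preserves filtered colimits is false in general. For $\mathcal{V}=\prod_{n\in\mathbf{N}}\mathbf{Set}$ it is $X\mapsto\prod_n X_n$, which does not commute with filtered colimits. Choose $\kappa$ with $*$ $\kappa$-presentable and work with $\kappa$-filtered colimits; these are still computed homwise for theories, since finite products commute with filtered colimits in a cartesian closed category. The accessible adjoint functor theorem then applies. This route gives only existence and never uses the strong-monoidality hypothesis, which is consistent with that hypothesis being automatic in this setting.
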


\begin{definition}
Let $C$ be a $\mathcal{V}$-category, $x$ be an object of $C$ with a choice of all finite iterated enriched products of $x$ with itself, and $L$ be a Lawvere $\mathcal{V}$-theory. An \emph{algebra structure} on $x$ consists of a map of Lawvere $\mathcal{V}$-theories $A:L\to \mathbf{End}_C(x)$. An \emph{algebra} of $L$ in $C$ consists of an object $x$ with a choice of all finite iterated enriched products of $x$ with itself together with an algebra structure $\phi:L\to \mathbf{End}_C(x)$ on $x$.
\end{definition}

\begin{definition}
Let $C$ be a $\mathcal{V}$-category and $(x,\phi)$ and $(y,\lambda)$ be algebras of $L$ in $C$. A morphism $f:(x,\phi)\to(y,\lambda)$ of algebras consists of a morphism $f:*\to C(x,y)$ such that the following diagram commutes.
\[
\begin{tikzpicture}[node distance=3cm]
    \node (A) {$L(n,m)$};
    \node (B)[right of=A]{$C(x^n,x^m)$};
    \node (C)[below of=A]{$C(y^n,y^m)$};
    \node (D)[right of=C]{$C(x^n,y^m)$};
    \draw[->](A) to node[above=3]{$\phi_{n,m}$}(B);
    \draw[->](A) to node[left=3]{$\lambda_{n,m}$}(C);
    \draw[->](B) to node[right=3]{$(f^m)^*$}(D);
    \draw[->](C) to node[below=3]{$f^n_*$}(D);
\end{tikzpicture}
\]
\end{definition}

\begin{theorem}
Let $J:I\to\mathcal{V}\mathbf{Law}$ be a connected diagram of Lawvere $\mathcal{V}$-theories. Then the category of algebras over $\text{colim}_{i\in\mathbf{ob}(I)}J(i)$ in $\underline{V}$ is isomorphic to the limit of the following diagram.
\[
I^\text{op}\xrightarrow{J^\text{op}}\mathcal{V}\mathbf{Law}^\text{op}\xrightarrow{\mathbf{Alg}(-,\underline{V})}\mathbf{CAT}
\]
\end{theorem}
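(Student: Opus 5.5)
The approach is to mirror the proof of Theorem \ref{Alg_pre_conn_colimits}. The key observation is that the functor $\mathbf{Alg}(-,\underline{\mathcal{V}})$ is representable on $\mathcal{V}\mathbf{Law}$, in a form that turns colimits of theories into limits of sets of structures. Concretely, fix an object $x$ of $\underline{\mathcal{V}}$. Since $\mathcal{V}$ is cartesian closed, every object has chosen finite powers $x^n$, and these are enriched products in $\underline{\mathcal{V}}$. An algebra structure on $x$ for a Lawvere $\mathcal{V}$-theory $L$ is then exactly an element of $\mathcal{V}\mathbf{Law}(L,\mathbf{End}_{\underline{\mathcal{V}}}(x))$. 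For $L=\operatorname{colim}_i J(i)$, the universal property of the colimit gives a bijection
\[
\mathcal{V}\mathbf{Law}\bigl(\operatorname{colim}_i J(i),\mathbf{End}_{\underline{\mathcal{V}}}(x)\bigr)\cong \lim_i \mathcal{V}\mathbf{Law}\bigl(J(i),\mathbf{End}_{\underline{\mathcal{V}}}(x)\bigr).
\]
This bijection is natural in $x$ with respect to the restriction maps along $J$.

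The next step is to assemble these bijections into an isomorphism of categories. Write $\mathbf{Alg}(L,\underline{\mathcal{V}})\to\mathcal{V}_0$ for the functor forgetting the structure. I will describe objects and morphisms of the limit $\lim_{I^{\mathrm{op}}}\mathbf{Alg}(J(-),\underline{\mathcal{V}})$ in $\mathbf{CAT}$ explicitly.

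\emph{Objects.} An object of the limit is a family $(x_i,\phi_i)_{i\in\mathbf{ob}(I)}$ compatible under the restriction functors $J(u)^*$. Each $J(u)^*$ leaves the underlying object unchanged, so compatibility forces $x_i=x_j$ whenever $i$ and $j$ are joined by a morphism. Connectedness of $I$ therefore gives a single object $x$. The compatible family $(\phi_i)$ is then exactly a cone, which by the displayed bijection corresponds to a unique $\phi:\operatorname{colim}J\to\mathbf{End}(x)$.

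\emph{Morphisms.} A morphism of the limit is a compatible family of morphisms $f_i$. By the same connectedness argument, all the $f_i$ equal a single $f:*\to[x,y]$. This $f$ must make the square in the definition of algebra morphism commute for each $J(i)$. I need it to commute for the colimit as well. The idea is to regard the square as an equalizer condition on $\operatorname{colim}J$: the set of elements $g$ of $\operatorname{colim}J(n,m)$ for which $(f^m)^*\phi(g)=f^n_*\lambda(g)$ holds should be closed under composition, identities and products, and hence form a sub-theory containing the images of all the $J(i)$. I then need this sub-theory to be the whole colimit.

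I expect this last point to be the main obstacle. In the $\mathcal{V}$-enriched setting, "elements" are not available and hom-objects of the colimit are not unions of images. The cleanest way around this is to form the comma (arrow) theory. Let $\mathbf{End}(f)$ denote the Lawvere $\mathcal{V}$-theory with hom-objects
\[
\mathbf{End}(f)(n,m)=[x^n,x^m]\times_{[x^n,y^m]}[y^n,y^m],
\]
the pullback of $(f^m)^*$ and $f^n_*$. It has two projection maps, to $\mathbf{End}(x)$ and to $\mathbf{End}(y)$. A morphism $f$ of algebras is then the same as a factorization of the pair $(\phi,\lambda)$ through $\mathbf{End}(f)$. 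The colimit's universal property applied to $\mathbf{End}(f)$ gives such a factorization from the family of factorizations at each $J(i)$. Uniqueness follows because the projections are jointly monic.

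This reduces the theorem to checking that $\mathbf{End}(f)$ really is a Lawvere $\mathcal{V}$-theory, which is a routine verification since $\mathcal{V}$ is cartesian closed and pullbacks commute with products. Identity-on-objects preservation along the restriction functors and functoriality are then immediate.
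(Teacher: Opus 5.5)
Your argument is correct, but it is not the paper's route. The paper gives no argument of its own. It states that the result generalizes Theorem~\ref{Alg_pre_conn_colimits} and follows directly from Example~2.10 of \cite{ArkorMcDermott2025}, so it imports the statement from the general theory developed there. You instead prove it by hand, in four steps:
algebra structures on $x$ are maps of theories into $\mathbf{End}_{\underline{\mathcal{V}}}(x)$;
connectedness of $I$ forces two things to be constant across the diagram, namely the underlying object (with its chosen powers) of an object of the limit, and the underlying $f\colon *\to[x,y]$ of a morphism of the limit;
the universal property of $\operatorname{colim}J$ then handles objects;
and the only non-formal point, that the algebra-morphism square for $\operatorname{colim}J$ is detected on the $J(i)$, is handled by your theory $\mathbf{End}(f)$.
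That theory is just the endomorphism theory of $f$ viewed as an object of the arrow $\mathcal{V}$-category $[\mathbf{2},\underline{\mathcal{V}}]$. Its hom-objects are exactly your pullbacks and its powers are computed pointwise, which makes the routine verification transparent. When writing it out, separate the two uses of uniqueness. The factorizations $\psi_i\colon J(i)\to\mathbf{End}(f)$ form a cocone because the projections are jointly monic. The induced $\psi$ lifts $(\phi,\lambda)$ by the uniqueness clause of the colimit for maps into $\mathbf{End}(x)$ and $\mathbf{End}(y)$. Your route buys a self-contained proof that shows exactly where connectedness is used; the statement fails for coproducts of theories, where the limit allows different underlying objects. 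It also adapts to algebras in other $\mathcal{V}$-categories with chosen powers, since it only needs pullbacks in $\mathcal{V}$. The paper's citation is shorter and places the result in the general framework of \cite{ArkorMcDermott2025}, but it leaves the translation from Lawvere $\mathcal{V}$-theories into that framework to the reader.
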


\begin{proof}
This generalizes Theorem \ref{Alg_pre_conn_colimits} and follows directly from Example 2.10 of \cite{ArkorMcDermott2025}.
\end{proof}

We naturally obtain a forgetful functor 
 \[
 U:\mathbf{Alg}(L,\underline{\mathcal{V}})\to \mathcal{V}
 \]
 and its left adjoint 
 \[
 \mathbf{Fr}_L:\mathcal{V}\to \mathbf{Alg}(L,\underline{\mathcal{V}})
 \]
 defined on objects with the following formula that implements a coend.
 \[
 \mathbf{Fr}_L(v)=\left(\int^{n\geq 0}L(n,1)\times v^{\times n}, \gamma^{\mathbf{Fr}_L(v)}\right)
 \]
 We use the fact that $\mathcal{V}$ is cartesian monoidal closed to define the structure maps $\gamma^{\mathbf{Fr}_L(v)}$. The following theorem holds. 

 \begin{theorem}
Given a Lawvere $\mathcal{V}$-theory, the adjunction
\[
\begin{tikzcd}[column sep=large]
\mathcal{V}
\arrow[r, shift left=1.2ex, "\mathbf{Fr}_L"]
&
\mathbf{Alg}(L,\underline{\mathcal{V}})
\arrow[l, shift left=1.2ex, "U"]
\end{tikzcd}
\]
is monadic. 
 \end{theorem}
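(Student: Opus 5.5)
We prove that $\mathbf{Fr}_L\dashv U$ is monadic by Beck's monadicity theorem, in its crude (reflexive-coequalizer) form. Three things are needed: the adjunction itself, conservativity of $U$, and the statement that $\mathbf{Alg}(L,\underline{\mathcal{V}})$ has, and $U$ preserves, coequalizers of reflexive pairs.

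\textbf{The adjunction.} We first identify an algebra with an underlying object. By the definition of algebra and of $\mathbf{End}_{\underline{\mathcal{V}}}(v)$, an algebra is an object $v$ of $\mathcal{V}$ together with a map of Lawvere $\mathcal{V}$-theories $L\to\mathbf{End}_{\underline{\mathcal{V}}}(v)$. Since $L$ is generated under products by $1$, such a map amounts to a family
\[
\gamma_n:L(n,1)\times v^n\to v
\]
for $n\geq 0$. This family must be compatible with composition in $L$, with the projections coming from $J$, and (via cartesian closedness) with the $\mathcal{V}$-functoriality of $L$. We then verify that the coend
\[
\int^{n\geq 0}L(n,1)\times v^{\times n}
\]
carries the evident algebra structure, with $\gamma$ induced by composition in $L$. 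Here we use that $-\times-$ preserves colimits in each variable because $\mathcal{V}$ is cartesian closed. Finally we check the universal property: a map of algebras $\mathbf{Fr}_L(v)\to(w,\gamma)$ corresponds to a map $v\to w$. This is an enriched co-Yoneda argument: restricting along the unit $v\cong L(1,1)\times v^1\to\mathbf{Fr}_L(v)$ gives the bijection, and the dinaturality defining the coend is exactly the compatibility of $\gamma$ with composition.

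\textbf{Conservativity.} This is immediate. A morphism of algebras is a morphism $f$ in $\mathcal{V}$ making the squares of the definition commute. If $f$ is invertible in $\mathcal{V}$, then $f^{-1}$ also makes these squares commute, since $(f^{-1})^n=(f^n)^{-1}$. So $f^{-1}$ is an algebra map, and $U$ reflects isomorphisms.

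\textbf{Reflexive coequalizers.} This is the main step. Let $f,g:(x,\phi)\rightrightarrows(y,\lambda)$ be a reflexive pair of algebra maps, and let $q:y\to z$ be their coequalizer in $\mathcal{V}$. We must show that $z$ carries a unique structure making $q$ an algebra map, and that $q$ is then a coequalizer in $\mathbf{Alg}$. The key fact is that in a cartesian closed $\mathcal{V}$, $-\times-$ preserves reflexive coequalizers in both variables jointly. Indeed, reflexive coequalizers are sifted colimits, and $\times$ preserves colimits separately in each variable. Hence $L(n,1)\times y^n\to L(n,1)\times z^n$ is a coequalizer of
\[
L(n,1)\times x^n\rightrightarrows L(n,1)\times y^n.
\]
Consequently $q\circ\lambda_n$ descends uniquely to a map $\zeta_n:L(n,1)\times z^n\to z$. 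The algebra axioms for $\zeta$ hold because they hold after precomposing with the epimorphism $L(\dots)\times y^{(\dots)}\to L(\dots)\times z^{(\dots)}$. The universal property of $q$ among algebra maps follows the same way. We expect this to be the delicate part: for a general closed symmetric monoidal base it fails, and the reflexivity hypothesis is used in an essential way.

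\textbf{Conclusion.} With the adjunction, conservativity, and the creation of reflexive coequalizers in hand, Beck's theorem yields monadicity.
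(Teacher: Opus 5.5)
The paper gives no proof of this theorem. It writes down the coend formula for $\mathbf{Fr}_L$, remarks that cartesian closedness is used to define the structure maps, and asserts monadicity. It then relies on monadicity when it treats $L$-algebras as $T_L$-algebras to construct $f_!$. So there is no argument to match, and yours fills the gap.

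Your proof is correct and is the standard one, via the crude (reflexive-coequalizer) form of Beck's theorem. Conservativity is immediate, as you say. The creation of reflexive coequalizers also goes through as written:
\begin{itemize}
\item Finite products commute with reflexive coequalizers in a cartesian closed category. So $q^n$ is the coequalizer of $f^n,g^n$, and hence $A\times q^n$ is a coequalizer for every $A$.
\item $q\circ\lambda_n$ coequalizes $1\times f^n$ and $1\times g^n$, since $f,g$ are algebra maps with $qf=qg$.
\item Every axiom for $\zeta$ (composition, units, compatibility with $J$) is an equation between maps out of some $A\times z^n$. The same holds for the algebra-map condition on the factorization $z\to w$ of a coequalizing algebra map.
\end{itemize}
The surrounding text of the paper suggests a different route: identifying $L$-algebras by hand with algebras for $T_L=U\mathbf{Fr}_L$. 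Compared with that, Beck's theorem spares you from computing $T_L$ and its multiplication. Beyond the adjunction, which the statement already presupposes, you only use properties of $U$.

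For that reason your adjunction paragraph is not really needed, but as written it is loose in three places.
\begin{itemize}
\item The coend is taken over $\mathcal{F}$ via $J$. By distributivity $\mathcal{F}_\mathcal{V}(n,m)\cong\mathcal{F}(n,m)\cdot *$, so the enriched coend is an ordinary one. Its dinaturality therefore encodes only compatibility with reindexing along $J$. It is compatibility with arbitrary composition in $L$ that makes the maps $\gamma_n\circ(1\times h^n)$, for $h:v\to w$, assemble into an algebra map.
\item The ``evident'' structure map $L(m,1)\times(\mathbf{Fr}_Lv)^m\to\mathbf{Fr}_Lv$ needs the identification $(\int^n L(n,1)\times v^n)^m\cong\int^n L(n,m)\times v^n$. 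This is again the commutation of finite products with a sifted colimit, not just separate cocontinuity of $\times$.
\item The isomorphism $v\cong L(1,1)\times v^1$ should read $v\cong *\times v\xrightarrow{j_1\times 1}L(1,1)\times v$.
\end{itemize}
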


Now let $f:L\to L'$ be a map of Lawvere $\mathcal{V}$-theories and $C$ be a $\mathcal{V}$-category. There is an induced map of algebras
\[
f_*:\mathbf{Alg}(L',C)\to \mathbf{Alg}(L,C)
\]
obtained by pre-composing with $f$. Let $T_L$ and $T_{L'}$ be the induced monads for $L$ and $L'$, respectively. Let $f^*:T_L\to T_{L'}$ be the induced map of monads from $f$.

We now describe the left adjoint to the pre-composition functor on objects when we restrict our attention to $\underline{\mathcal{V}}$. Let $(X,\alpha^X)$ be an algebra of $L$ considered as an algebra over the monad $T_L$. We induce a reflexive pair of $T_{L'}$-algebras and thus algebras of $L'$.

\[
\begin{tikzpicture}[>=latex]
\node (A) at (0,0) {$T_{L'}(T_L(X))$};
\node (B) at (5,0) {$T_{L'}(X)$};

\draw[->]
([yshift=3pt]A.east) --
node[above] {$\mu^{L'}_X\circ f^*(X)$}
([yshift=3pt]B.west);

\draw[->]
([yshift=-3pt]A.east) --
node[below] {$T_{L'}(\alpha_X)$}
([yshift=-3pt]B.west);
\end{tikzpicture}
\]

We now define $f_!(X,\alpha^X)$ to be the coequalizer of this diagram. This construction upgrades to a functor $f_!:\mathbf{Alg}(L,\underline{\mathcal{V}})\to \mathbf{Alg}(L',\underline{\mathcal{V}})$ by the universal property of the coequalizer and is the left adjoint to $f_*:\mathbf{Alg}(L',\underline{\mathcal{V}})\to \mathbf{Alg}(L,\underline{\mathcal{V}})$. 

\subsection*{Deformations over a pro}

\begin{definition}
    Let $P$ be a pro. A \emph{deformation} of $P$ in $\mathcal{V}$ consists of \begin{itemize}
    \item an object $\mathcal{O}_{f}$ for all $n,k\geq 0$ and $f\in P(n,k)$,
    \item a morphism
    \[
    \eta_k:*\to \mathcal{O}_{1_k},
    \]
    called the \emph{$k$-fold unity map}, for all $k\geq 0$, and
    \item a morphism
    \[
    \Gamma:\mathcal{O}_{g}\times\prod_{i=1}^t\mathcal{O}_{f_i}\to\mathcal{O}_{g\circ\sum_{i=1}^tf_i},
    \]
   called \emph{composition}, for all $t\geq 1$, $n_1,\dots,n_t,m_1,\dots,m_t,k\geq 0$, $f_i\in P(n_i,m_i)$ for $1\leq i\leq t$, and $g\in P(\sum_{i=1}^tm_i,k)$.
   
\end{itemize}
This data satisfies the following axioms.
\begin{itemize}
    \item \emph{Associativity of composition for parametrization objects.} The following diagram commutes for all 
compatible indices.
    \[
    \begin{tikzpicture}[node distance=3cm, auto]
        \node (A) {$\mathcal{O}_{g}\times \prod_{i=1}^t(\mathcal{O}_{f_i}\times\prod_{j=1}^{n_i}\mathcal{O}_{h_{j,i}})$};
        \node (B)[below of=A, node distance=2cm]{};
        \node (C)[left of=B, node distance=4cm]{$\mathcal{O}_{g}\times \prod_{i=1}^t\mathcal{O}_{f_i}\times \prod_{i=1}^t\prod_{j=1}^{n_i}\mathcal{O}_{h_{j,i}}$};
        \node (D)[right of=B, node distance=4cm]{$\mathcal{O}_{g}\times \prod_{i=1}^t\mathcal{O}_{f_i\circ \sum_{j=1}^{n_i}h_{j,i}}$};
        \node (E)[below of=B, node distance=2cm]{};
        \node (F)[left of=E, node distance=3cm]{$\mathcal{O}_{g\circ \sum_{i=1}^tf_i}\times \prod_{i=1}^t\prod_{j=1}^{n_i}\mathcal{O}_{h_{j,i}}$};
        \node (G)[right of=E, node distance=3cm]{$\mathcal{O}_{g\circ \sum_{i=1}^t(f_i\circ\sum_{j=1}^{n_i}h_{j,i})}$};
        \draw[->](A) to node[left=3] {$\cong$}(C);
        \draw[->](A) to node[right=3] {$1\times\prod_{i=1}^t\Gamma$}(D);
        \draw[->](C) to node[left=3] {$\Gamma\times 1$}(F);
        \draw[->](F) to node[below=3] {$\Gamma$}(G);
        \draw[->](D) to node[right=3] {$\Gamma$}(G);
    \end{tikzpicture}
\]
\item \emph{Regulation of Unities.} The following diagram commutes for all 
compatible indices.
\[
  \begin{tikzpicture}[node distance=3cm, auto]
        \node (A) {$*$};
       \node (B)[right of=A]{$*\times\prod_{i=1}^t*$};
       \node (C)[right of=B,node distance=5cm]{$\mathcal{O}_{1_{\sum_{i=1}^tn_i}}\times\prod_{i=1}^t\mathcal{O}_{1_{n_i}}$};
       \node (D)[below of=C]{$\mathcal{O}_{1_{\sum_{i=1}^tn_i}}$};
       \draw[->] (A) to node[above=3]{$\cong$}(B);
       \draw[->] (B) to node[above=3]{$\eta_{\sum_{i=1}^tn_i}\times \prod_{i=1}^t\eta_{n_i}$}(C);
       \draw[->](C) to node[right=3]{$\Gamma$}(D);
       \draw[->](A) to node[below=3]{$\eta_{\sum_{i=1}^tn_i}$} (D);
    \end{tikzpicture}
\]
\item \emph{Left Unity.} The following diagram commutes for all 
compatible indices.
\[
  \begin{tikzpicture}[node distance=3cm, auto]
        \node (A) {$\mathcal{O}_{f}$};
       \node (B)[right of=A]{$*\times\mathcal{O}_{f}$};
       \node (C)[right of=B]{$\mathcal{O}_{1_k}\times\mathcal{O}_{f}$};
       \node (D)[below of=C]{$\mathcal{O}_{f}$};
       \draw[->] (A) to node[above=3]{$\cong$}(B);
       \draw[->] (B) to node[above=3]{$\eta_{k}\times 1$}(C);
       \draw[->](C) to node[right=3]{$\Gamma$}(D);
       \draw[->](A) to node[below=3]{$1$} (D);
    \end{tikzpicture}
\]
\item \emph{Right Unity.} The following diagram commutes for all 
compatible indices.
\[
  \begin{tikzpicture}[node distance=3cm, auto]
        \node (A) {$\mathcal{O}_{f}$};
       \node (B)[right of=A]{$\mathcal{O}_{f}\times *$};
       \node (C)[right of=B]{$\mathcal{O}_{f}\times \mathcal{O}_{1_n}$};
       \node (D)[below of=C]{$\mathcal{O}_{f}$};
       \draw[->] (A) to node[above=3]{$\cong$}(B);
       \draw[->] (B) to node[above=3]{$1\times\eta_{n}$}(C);
       \draw[->](C) to node[right=3]{$\Gamma$}(D);
       \draw[->](A) to node[below=3]{$1$} (D);
    \end{tikzpicture}
\]
\end{itemize}
\end{definition}

Given a pro $P$, we write $(\mathcal{O},\eta,\Gamma)$ to denote a deformation of $P$.

\begin{definition}
Let $P$ be a pro. A map of deformations of $P$
\[
\alpha:(\mathcal{O},\eta,\Gamma)\to (\mathcal{O}',\eta',\Gamma')
\]
consists of a morphism
\[
\alpha_{f}:\mathcal{O}_{f}\to \mathcal{O}'_{f}
\]
for all $n,k\geq 0$ and $f\in P(n,k)$, This satisfies the axioms for preservation of units and preservation of composition.
\end{definition}

Let $P$ be a pro. We write $\mathbf{Def}^P_\mathcal{V}$ to denote the category of deformations of $P$ in $\mathcal{V}$.

\begin{proposition}
Let $P$ be a pro. Then there is an induced functor 
\[
\Xi^P:\mathbf{Def}^P_\mathcal{V}\to \mathcal{V}\mathbf{Pro}.
\]
\end{proposition}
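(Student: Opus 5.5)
The plan is to define $\Xi^P$ by summing the deformation objects over the fibres of the hom-sets of $P$. Given a deformation $(\mathcal{O},\eta,\Gamma)$ of $P$, set
\[
\Xi^P(\mathcal{O})(n,k)=\coprod_{f\in P(n,k)}\mathcal{O}_f
\]
for all $n,k\geq 0$, using the chosen coproducts in $\mathcal{V}$. This is the same device as in the construction of $D:\mathbf{Pro}\to\mathcal{V}\mathbf{Pro}$, which is recovered when every $\mathcal{O}_f=*$. The unit $1_k:*\to\Xi^P(\mathcal{O})(k,k)$ is the composite $\iota_{1_k}\circ\eta_k$.

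For composition, since $\mathcal{V}$ is cartesian closed, $-\times-$ preserves colimits in each variable. This gives a distributivity isomorphism
\[
\Bigl(\coprod_{g}\mathcal{O}_g\Bigr)\times\prod_{i=1}^t\coprod_{f_i}\mathcal{O}_{f_i}\cong\coprod_{g,f_1,\dots,f_t}\Bigl(\mathcal{O}_g\times\prod_{i=1}^t\mathcal{O}_{f_i}\Bigr),
\]
with $g$ ranging over $P(\sum m_i,k)$ and $f_i$ over $P(n_i,m_i)$. On the summand indexed by $(g,f_1,\dots,f_t)$, I apply the deformation composition $\Gamma$ and then the coprojection $\iota_{g\circ\sum f_i}$ into $\Xi^P(\mathcal{O})(\sum n_i,k)$. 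The universal property of the coproduct assembles these into the $\mathcal{V}$-pro composition.

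The axioms are then checked summand by summand.
\begin{itemize}
\item Associativity reduces to the associativity axiom for the deformation, using naturality of the distributivity isomorphisms and the identity $g\circ\sum(f_i\circ\sum h_{j,i})=(g\circ\sum f_i)\circ\sum h_{j,i}$ in $P$, which holds because $P$ is a strict monoidal category.
\item The two unit axioms reduce to Left and Right Unity.
\item Compatibility of unities reduces to Regulation of Unities together with $1_{\sum n_i}=\sum 1_{n_i}$ in $P$.
\end{itemize}

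On morphisms, a map $\alpha$ of deformations is sent to $\coprod_f\alpha_f$. Preservation of units and composition follows from the corresponding axioms for $\alpha$ and naturality of the distributivity isomorphism. Functoriality is immediate from functoriality of coproducts.

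The main obstacle is the bookkeeping in the associativity check. One must verify that the iterated distributivity isomorphisms on the two sides of the associativity square agree, and that they match the reindexing of summands. I would handle this by reducing, via the universal property of coproducts and the fact that products commute with coproducts in each variable, to checking commutativity on each summand $\mathcal{O}_g\times\prod_i(\mathcal{O}_{f_i}\times\prod_j\mathcal{O}_{h_{j,i}})$. On that summand every distributivity map is the canonical inclusion, and the square becomes exactly the deformation's associativity diagram followed by a single coprojection.
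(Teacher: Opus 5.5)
Your proposal is correct and matches the paper's proof: you use the same hom-objects $\coprod_{f\in P(n,k)}\mathcal{O}_f$ and the same unit $\iota_{1_k}\circ\eta_k$. Your composition is likewise built from the distributivity isomorphism, then the deformation's $\Gamma$ on each summand, then the coprojection, and morphisms go to $\coprod_f\alpha_f$. Your summand-by-summand axiom check and the remark recovering $D$ when every $\mathcal{O}_f=*$ simply spell out what the paper asserts as ``well-defined by the axioms for a deformation'' and ``defined in the same vein.''
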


\begin{proof}
Let $\mathcal{O}:=(\mathcal{O},\eta,\Gamma)$ be a deformation of $P$. Define 
\[
\overline{\mathcal{O}}(n,m)=\coprod_{f\in P(n,m)}\mathcal{O}_f
\]
for all $n,m\geq 0$ and $1_k:*\to\overline{\mathcal{O}}$ as the following composite for $k\geq 0$, where $\iota_{\text{id}_k}$ is the structure morphism for the coproduct.
\[
*\xrightarrow{\eta^k}\mathcal{O}_{\text{id}_k}\xrightarrow{\iota_{\text{id}_k}}\overline{\mathcal{O}}(k,k)
\]
Let $t\geq 1$ and $n_1,\dots, n_t,m_1,\dots, m_t,k\geq 0$. Define 
\[
\Gamma:\overline{\mathcal{O}}(\sum_{i=1}^tm_i,k)\times \prod_{i=1}^t\overline{\mathcal{O}}(n_i,m_i)\to \overline{\mathcal{O}}(\sum_{i=1}^tn_i,k)
\]
as the following composite, where the first map is the identity  by definition, the second map is the obtained by using that $\mathcal{V}$ is a cartesian closed category, the third map is induced by the composition maps for the deformation, and the final map is induced by coproduct structure maps.
\[
\begin{tikzpicture}[node distance=2cm]
    \node (A) {$\overline{\mathcal{O}}(\sum_{i=1}^tm_i,k)\times \prod_{i=1}^t\overline{\mathcal{O}}(n_i,m_i)$};
    \node (B)[below of=A]{$\coprod_{g\in P(\sum_{i=1}^tm_i,k)}\mathcal{O}_g\times \prod_{i=1}^t\coprod_{f_i\in P(n_i,m_i)}\mathcal{O}_{f_i}$};
    \node (C) [below of=B]{$\coprod_{g\in P(\sum_{i=1}^tm_i,k)}\coprod_{f_1\in P(n_1,m_1)}\dots \coprod_{f_t\in P(n_t,m_t)}(\mathcal{O}_g\times\prod_{i=1}^t\mathcal{O}_{f_i})$};
    \node (D)[below of=C]{$\coprod_{g\in P(\sum_{i=1}^tm_i,k)}\coprod_{f_1\in P(n_1,m_1)}\dots \coprod_{f_t\in P(n_t,m_t)}\mathcal{O}_{g\circ\sum_{i=1}^tf_i}$};
    \node (E)[below of=D]{$\overline{\mathcal{O}}(\sum_{i=1}^tn_i,k)$};
    \draw[->](A) to node[left=3]{$=$}(B);
    \draw[->](B) to node[left=3]{$\cong$}(C);
    \draw[->](C) to node[left=3]{$\coprod_{g\in P(\sum_{i=1}^tm_i,k)}\coprod_{f_1\in P(n_1,m_1)}\dots \coprod_{f_t\in P(n_t,m_t)}\Gamma$}(D);
    \draw[->](D) to node[left=3]{$\iota$}(E);
\end{tikzpicture}
\]
We now define $\Xi^P(\mathcal{O})=(\overline{\mathcal{O}},1,\Gamma)$, which is well-defined by the axioms for a deformation. This provides the object part to the functor $\Xi^P$. The morphism part is defined in the same vein.
\end{proof}

\begin{definition}
    Let $P$ be a pro, $(\mathcal{O},\eta,\Gamma)$ be an deformation of $P$, and $C$ be a $\mathcal{V}$-category. An \emph{algebra of $(\mathcal{O},\eta,\Gamma)$} consists of an algebra of $\Xi^P(\mathcal{O})$ in $C$.
\end{definition}

\begin{proposition}
Let $P$ be a pro and $(\mathcal{O},\eta,\Gamma)$ be an deformation of $P$.
Then providing an algebra of $(\mathcal{O},\eta,\Gamma)$ in $\underline{\mathcal{V}}$ is equivalent to giving
\begin{itemize}
    \item an object $v$ of $\mathcal{V}$ and
    \item a morphism 
    \[
    \gamma:\mathcal{O}_{f}\times v^{\times n}\to v^{\times k},
    \]
    for all $n,k\geq 0$ and $f\in P(n,k)$
\end{itemize}
subject to the following axioms.
\begin{itemize}
\item \emph{Algebra-Composition Compatibility.} The following diagram commutes for all 
compatible indices.
\[
  \begin{tikzpicture}[node distance=5cm, auto]
        \node (A) {$\mathcal{O}_{f}\times\prod_{i=1}^t(\mathcal{O}_{g_i}\times v^{n_i})$};
        \node (B)[right of=A,node distance=1.25cm]{};
        \node(C)[above of=B,node distance=1.25cm]{$\mathcal{O}_{f}\times\prod_{i=1}^t\mathcal{O}_{g_i}\times v^{\sum_{i=1}^tn_i}$};
        \node(D)[below of=B,node distance=1.25cm]{$\mathcal{O}_{f}\times\prod_{i=1}^t v^{m_i}$};
        \node (E)[right of=C]{$\mathcal{O}_{f\circ\sum_{i=1}^tg_i}\times v^{\sum_{i=1}^tn_i}$};
        \node (F)[right of=D]{$\mathcal{O}_{f}\times v^{\sum_{i=1}^tm_i}$};
        \node (G)[right of=B,node distance=6.25cm]{$X(k)$};
        \draw[->] (A) to node[left=3] {$\cong$} (C);
        \draw[->](C) to node[above=3]{$\Gamma\times 1$}(E);
        \draw[->](E) to node[right=3]{$\gamma$}(G);
        \draw[->] (A) to node[left=3] {$1\times\prod_{i=1}^t\gamma$} (D);
        \draw[->] (D) to node[below=3]{$\cong$}(F);
        \draw[->](F) to node[right=3]{$\gamma$}(G);
    \end{tikzpicture}
\]
\item \emph{Algebra-Unity Compatibility.} The following diagram commutes for all 
compatible indices.
\[
  \begin{tikzpicture}[node distance=2.5cm, auto]
        \node (A) {$v^n$};
       \node (B)[right of=A]{$*\times v^n$};
       \node (C)[right of=B]{$\mathcal{O}_{1_n}\times v^n$};
       \node (D)[below of=C]{$v^n$};
       \draw[->] (A) to node[above=3]{$\cong$}(B);
       \draw[->] (B) to node[above=3]{$\eta_{n}\times 1$}(C);
       \draw[->](C) to node[right=3]{$\gamma$}(D);
       \draw[->](A) to node[below=3]{$1$} (D);
    \end{tikzpicture}
\]
\end{itemize}
\end{proposition}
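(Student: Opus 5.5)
The plan is to unwind the definition: an algebra of $(\mathcal{O},\eta,\Gamma)$ in $\underline{\mathcal{V}}$ is, by definition, an algebra of the $\mathcal{V}$-pro $\Xi^P(\mathcal{O})=(\overline{\mathcal{O}},1,\Gamma)$. That is an object $v$ together with a map of $\mathcal{V}$-pros
\[
A:\overline{\mathcal{O}}\to\mathbf{End}_{\underline{\mathcal{V}}}(v),
\]
where $\mathbf{End}_{\underline{\mathcal{V}}}(v)(n,k)=[v^{n},v^{k}]$. The proof then passes this data through two universal properties, one after the other.

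First, I use the coproduct. Since $\overline{\mathcal{O}}(n,k)=\coprod_{f\in P(n,k)}\mathcal{O}_f$, a family of morphisms $A_{n,k}:\overline{\mathcal{O}}(n,k)\to[v^n,v^k]$ is the same as a family $A_f:\mathcal{O}_f\to[v^n,v^k]$ indexed by $f\in P(n,k)$. Second, I use cartesian closedness. Each $A_f$ transposes uniquely to a morphism
\[
\gamma_f:\mathcal{O}_f\times v^{\times n}\to v^{\times k}.
\]
Together these give a bijection between families $\{A_{n,k}\}$ of morphisms of $\mathcal{V}$-signatures and families $\{\gamma_f\}$. The bijection is natural in $v$, since both steps are natural.

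It remains to match the axioms. A map of $\mathcal{V}$-pros must preserve units and composition.

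\emph{Units.} Preservation of units says $A_{k,k}\circ\iota_{1_k}\circ\eta_k$ equals the identity point $*\to[v^k,v^k]$. Transposing, this is exactly the Algebra-Unity Compatibility diagram.

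\emph{Composition.} Preservation of composition is an equality of two maps out of $\overline{\mathcal{O}}(\sum m_i,k)\times\prod\overline{\mathcal{O}}(n_i,m_i)$. By distributivity of products over coproducts in the cartesian closed $\mathcal{V}$, this domain is the coproduct over tuples $(f,g_1,\dots,g_t)$ of $\mathcal{O}_f\times\prod\mathcal{O}_{g_i}$. So the equality can be checked summand by summand. On the summand indexed by $(f,g_1,\dots,g_t)$:
\begin{itemize}
\item one side is $A_{f\circ\sum g_i}\circ\Gamma$;
\item the other side is the composite of $A_f\times\prod A_{g_i}$ with the $\Gamma$ of $\mathbf{End}(v)$, which is the product map $+$ followed by internal composition.
\end{itemize}
Transposing both sides against $v^{\sum n_i}$ should yield precisely the two legs of the Algebra-Composition Compatibility diagram. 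The top leg is $\Gamma\times 1$ followed by $\gamma$. The bottom leg acts blockwise by $\prod\gamma_{g_i}$, then applies $\gamma_f$.

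The main obstacle is this last identification. It requires knowing that, under the tensor–hom adjunction, the $\mathcal{V}$-pro composition on $\mathbf{End}_{\underline{\mathcal{V}}}(v)$ transposes to "evaluate blockwise, then evaluate". In other words, the internal map $+:\prod[v^{n_i},v^{m_i}]\to[v^{\sum n_i},v^{\sum m_i}]$ transposes to the shuffle isomorphism followed by $\prod \mathrm{ev}$, and internal composition transposes to iterated evaluation. I would verify this from the construction of $U(L,J)$ given earlier, using Lemma \ref{ind_fun_iso_to_prod} to identify the enriched products in $\underline{\mathcal{V}}$ with cartesian products, plus the standard description of the composition in $\underline{\mathcal{V}}$ via evaluation.

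Once these transposes are identified, both correspondences are bijections. Morphisms of algebras match in the same way: the square of the morphism definition transposes to the evident equivariance. Hence the two descriptions are equivalent.
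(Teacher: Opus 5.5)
Your proposal is correct and takes the same route as the paper, whose proof simply reads off the data from $\Xi^P(\mathcal{O})$ using the cartesian closed structure of $\mathcal{V}$. Your argument (coproduct universal property, currying, distributivity to check composition summand by summand, and identifying the transpose of the composition in $\mathbf{End}_{\underline{\mathcal{V}}}(v)$ as blockwise evaluation followed by evaluation) is a fully spelled-out version of that one-line argument.
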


\begin{proof}
We just read the data obtained by utilizing the cartesian closed structure of $\mathcal{V}$ and the fact that the $\mathcal{V}$-pro $\Xi^P(\mathcal{O},\eta,\Gamma)$ is completely determined by the deformation  $(\mathcal{O},\eta,\Gamma)$.
\end{proof}


\begin{notation}
 Let $P$ be a pro, $\mathcal{O}:=(\mathcal{O},\eta,\Gamma)$ be a deformation of $P$, and $C$ be a $\mathcal{V}$-category. We write $\mathbf{Alg}(\mathcal{O},C):=\mathbf{Alg}(\Xi^P(\mathcal{O}),C)$ to denote the category of models of $\mathcal{O}$ in $C$. 
\end{notation}

\subsection*{Deformations of controlled theories}
\begin{definition}
Let
\[
\Omega=\langle \mathcal{G}:P\xrightarrow{i}\mathbf{Fr}(\mathcal{G})\xrightarrow{\mathbf{st}}L\rangle
\]
be a controlled theory. A \emph{deformation} of $\Omega$ in $\mathcal{V}$ consists of a deformation $(\mathcal{O},\eta,\Gamma)$ of $\overline{\Omega}$ in $\mathcal{V}$ together with a morphism 
\[
W_f:*\to \mathcal{O}_{[f]}
\]
for all $n,k\geq 0$ and $f\in P(n,k)$ such that $W_{id_k}=\eta_k$ for all $k\geq 0$. This data satisfies the following axiom: given $t\geq 1$,  $n_1,\dots, n_t,m_1,\dots, m_t,k\geq 0$, $f_i\in P(n_i,m_i)$ for $1\leq i\leq t$, and $g\in P(\sum_{i=1}^tm_i,k)$, the following diagram commutes.
\[
  \begin{tikzpicture}[node distance=3cm, auto]
        \node (A) {$*$};
       \node (B)[right of=A]{$*\times\prod_{i=1}^t*$};
       \node (C)[right of=B,node distance=5cm]{$\mathcal{O}_{[g]}\times\prod_{i=1}^t\mathcal{O}_{[f_i]}$};
       \node (D)[below of=C]{$\mathcal{O}_{[g\circ \sum_{i=1}^tf_i]}$};
       \draw[->] (A) to node[above=3]{$\cong$}(B);
       \draw[->] (B) to node[above=3]{$W_{g}\times \prod_{i=1}^tW_{f_i}$}(C);
       \draw[->](C) to node[right=3]{$\Gamma$}(D);
       \draw[->](A) to node[below=3]{$W_{g\circ \sum_{i=1}^tf_i}$} (D);
    \end{tikzpicture}
\]
We write $(\mathcal{O},\eta,\Gamma, W)$ to denote a deformation of $\Omega$. 
\end{definition}


\begin{definition}
    Let
\[
\Omega=\langle \mathcal{G}:P\xrightarrow{i}\mathbf{Fr}(\mathcal{G})\xrightarrow{\mathbf{st}}L\rangle
\]
be a controlled theory and $(\mathcal{O},\eta,\Gamma, W)$ and $(\mathcal{O}',\eta',\Gamma',W')$ be deformations of $\Omega$. A map 
\[
\alpha:(\mathcal{O},\eta,\Gamma,W)\to (\mathcal{O}',\eta',\Gamma',W')
\]
of deformations of $\Omega$ consists of a map $\alpha:(\mathcal{O},\eta,\Gamma)\to (\mathcal{O}',\eta',\Gamma')$ of deformations over $\overline{\Omega}$ such that the following diagram commutes for all $n,k\geq 0 $ and $f\in P(n,k)$.
\[
  \begin{tikzpicture}[node distance=4cm, auto]
        \node (A) {$*$};
       \node (C)[right of=A]{$\mathcal{O}_{[f]}$};
       \node (D)[below of=C]{$\mathcal{O}'_{[f]}$};
       \draw[->] (A) to node[above=3]{$W_f$}(C);
       \draw[->](C) to node[right=3]{$\alpha$}(D);
       \draw[->](A) to node[below=3]{$W'_f$} (D);
    \end{tikzpicture}
\]
\end{definition}

\begin{notation}
Let
\[
\Omega=\langle \mathcal{G}:P\xrightarrow{i}\mathbf{Fr}(\mathcal{G})\xrightarrow{\mathbf{st}}L\rangle
\]
be a controlled theory. We write $\mathbf{Def}^{\Omega}_\mathcal{V}$ to denote the category of deformations of $\Omega$ in $\mathcal{V}$.
\end{notation}

There is a natural functor $i:\mathbf{Def}^{\Omega}_\mathcal{V}\to \mathbf{Def}^{\overline{\Omega}}_\mathcal{V}$ by definition of deformation of $\Omega$. Moreover, we obtain a functor 
\[
I^{(-)}:\mathbf{Def}^{\Omega}_\mathcal{V}\to *_{D(P)}\downarrow \text{id}_{\mathcal{V}\mathbf{Pro}}
\]
defined where we send $\mathcal{O}$ to the morphism 
\[
I^\mathcal{O}:D(P)\to \Xi^{\overline{\Omega}}(i(\mathcal{O}))
\]
of $\mathcal{V}$-pros induced by the $W$-structure maps. Given a deformation $\mathcal{O}$, we obtain an induced map of Lawvere $\mathcal{V}$-theories denoted by the following, where $L_P$ is the induced Lawvere theory from the pro $P$.
   \[
   D(L_P)\xrightarrow{J^\mathcal{O}}L^\mathcal{V}_{\Xi^{\overline{\Omega}}(i(\mathcal{O}))}
   \]
   There is an induced map of Lawvere theories 
   \[
   L_P\xrightarrow{\overline{i}}\mathbf{Fr}(\mathcal{G})
   \]
   induced by the faithful map of pros $i:P\to\mathbf{Fr}(\mathcal{G})$ which induces the following map of Lawvere $\mathcal{V}$-theories.
   \[
   D(L_P)\xrightarrow{D(\overline{i})}D(\mathbf{Fr}(\mathcal{G}))
   \]
   We now define the induced Lawvere $\mathcal{V}$-theory of $\mathcal{O}$, denoted by $L_{\mathcal{O}}$, as the pushout of the following span.
   \[
D(\mathbf{Fr}(\mathcal{G}))\xleftarrow{D(\overline{i})}D(L_P)\xrightarrow{J^\mathcal{O}}L^\mathcal{V}_{\Xi^{\overline{\Omega}}(i(\mathcal{O}))}
   \]
This provides us the object part of a functor $\Xi^\Omega:\mathbf{Def}^{\Omega}_\mathcal{V}\to *_{D(\mathbf{Fr}(\mathcal{G}))}\downarrow \text{id}_{\mathcal{V}\mathbf{Law}}$, where 
\[
\Xi^\Omega(\mathcal{O}):D(\mathbf{Fr}(\mathcal{G}))\to L_{\mathcal{O}}
\]
is the structure map for the pushout. When we post-compose with the forgetful functor 
\[
U:*_{D(\mathbf{Fr}(\mathcal{G}))}\downarrow \text{id}_{\mathcal{V}\mathbf{Law}}\to \mathcal{V}\mathbf{Law},
\]
we obtain a functor $L_{(-)}:\mathbf{Def}^{\Omega}_\mathcal{V}\to \mathcal{V}\mathbf{Law}$. 

\begin{definition}
Let 
\[
\Omega=\langle \mathcal{G}:P\xrightarrow{i}\mathbf{Fr}(\mathcal{G})\xrightarrow{\mathbf{st}}L\rangle
\]
be a controlled theory, $\mathcal{O}$ be a deformation of $\Omega$, and $C$ be a $\mathcal{V}$-category. Then an \emph{algebra} of $\mathcal{O}$ in $C$ is an algebra of $L_{\mathcal{O}}$ in $C$.
\end{definition}

We write $\mathbf{Alg}(\mathcal{O},C):=\mathbf{Alg}(L_\mathcal{O},C)$ to denote the category of models of $\mathcal{O}$ in $C$. There is naturally an induced functor $\Xi^\Omega(\mathcal{O})_*:\mathbf{Alg}(\mathcal{O},C)\to\mathbf{Alg}(D(\mathbf{Fr}(\mathcal{G})),C)$. This allows us to speak about 

\section{Categorification and homotopification}
\subsection*{Augmentations} We now provide a generalization of the augmentations of May in \cite{May2} and Schw\"anzl and Vogt in \cite{SchwanzlVogt1989} in the setting of deformation of controlled theories in cartesian closed model categories. We define notions of lax, pseudo, and strict augmentations.
\begin{definition}
Let $C$ be a cartesian closed model category and 
\[
\Omega=\langle \mathcal{G}:P\xrightarrow{i}\mathbf{Fr}(\mathcal{G})\xrightarrow{\mathbf{st}}L\rangle
\]
be a controlled theory. A \emph{lax augmentation}  is a deformation $(\mathcal{O},\eta,\Gamma,W)$ such that $\pi_0(W_f)=\text{id}_*$ for all $n,k\geq 0$ and $f\in P(n,k)$.
\end{definition}

\begin{example}
We may freely generate a deformation over the controlled theory $\Omega_{mon}$ of Example \ref{cont_theory_for_mon_gpds} in $\mathbf{Cat}$ as follows. For $n,k\geq 0$ and $f\in\overline{\Omega_{mon}}(n,k)$, we let $\mathbf{ob}(\mathcal{O}^{sk}_{[f]})=[f]$. We now give $\mathbf{ob}(\mathcal{O}^{sk}_{[f]})=[f]$ a poset structure for all $n,k\geq 0$ and $f\in\overline{\Omega_{mon}}(n,k)$. The morphisms of all the $\mathcal{O}^{sk}_{[f]}$ are generated by requiring the following inequalities.
\[
m\circ (m+1)\leq m\circ (1+m), m\circ (\eta+1)\leq \text{id}_1\leq m\circ (1+\eta)
\]
The structure maps $\eta^{sk},\Gamma^{sk},\text{ and } W^{sk}$ are defined and the axioms hold due to the pro structure and the thin structure on the components. A quick check shows that $(\mathcal{O}^{sk},\eta^{sk},\Gamma^{sk},W^{sk})$ is a lax augmentation.
 The category of algebras $\mathbf{Alg}^\Omega(\mathcal{O}^{sk},\eta^{sk},\Gamma^{sk},W^{sk})$ is isomorphic to the category of left skew monoidal categories (see Section 2 of \cite{Uus}) together with strict maps.
\end{example}

\begin{definition}
Let $C$ be a cartesian closed model category and 
\[
\Omega=\langle \mathcal{G}:P\xrightarrow{i}\mathbf{Fr}(\mathcal{G})\xrightarrow{\mathbf{st}}L\rangle
\]
be a controlled theory. A \emph{strong augmentation} is a deformation $(\mathcal{O},\eta,\Gamma,W)$ such that the map $W_f:*\to \mathcal{O}_{[f]}$ is a weak equivalence for all $n,k\geq 0$ and $f\in P(n,k)$.
\end{definition}
 
\begin{definition}
Let $C$ be a cartesian closed model category and 
\[
\Omega=\langle \mathcal{G}:P\xrightarrow{i}\mathbf{Fr}(\mathcal{G})\xrightarrow{\mathbf{st}}L\rangle
\]
be a controlled theory. A \emph{strict augmentation} is a deformation $(\mathcal{O},\eta,\Gamma)$ such that the terminal map $\mathcal{O}_{[f]}\xrightarrow{!}*$ is an isomorphism for all $n,k\geq 0$ and $f\in P(n,k)$.
\end{definition}
\begin{proposition}
    Let $C$ be a cartesian closed model category and 
\[
\Omega=\langle \mathcal{G}:P\xrightarrow{i}\mathbf{Fr}(\mathcal{G})\xrightarrow{\mathbf{st}}L\rangle
\]
be a controlled theory. There is only one strict algebraic augmentation, up to isomorphism. Moreover, its algebras are equivalent to the algebras over the $\mathcal{V}$-pro $D(\overline{\Omega})$.
\end{proposition}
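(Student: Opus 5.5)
The plan is to show that a strict augmentation is forced, up to unique isomorphism, to be the terminal deformation, and then to compute its algebras through $\Xi^{\overline{\Omega}}$. Let $(\mathcal{O},\eta,\Gamma,W)$ be a strict augmentation of $\Omega$ in $C$. Every class $[f]\in\overline{\Omega}(n,k)$ is represented by some $f\in P(n,k)$, so each $\mathcal{O}_{[f]}$ is terminal, with $!:\mathcal{O}_{[f]}\to *$ an isomorphism. Once all objects are terminal, the remaining data is automatic. The unit maps $\eta_k$, the compositions $\Gamma$ (maps into a terminal object) and the points $W_f$ are each the unique such maps. Every axiom is an equation between parallel maps into a terminal object, so it holds. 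In particular, the condition $W_{\mathrm{id}_k}=\eta_k$ and the compatibility square for $W$ both hold automatically. This gives existence: the family $\mathcal{O}_{[f]}=*$ with the unique structure maps is a strict augmentation. It also gives uniqueness: for two strict augmentations $\mathcal{O},\mathcal{O}'$, the componentwise maps $\mathcal{O}_{[f]}\cong *\cong\mathcal{O}'_{[f]}$ are isomorphisms, and they preserve $\eta$, $\Gamma$ and $W$ because every map into a terminal object is unique. Hence they form an isomorphism in $\mathbf{Def}^{\Omega}_{\mathcal{V}}$, and in fact the strict augmentation is terminal in $\mathbf{Def}^{\Omega}_{\mathcal{V}}$.

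For the second claim, I will identify $\Xi^{\overline{\Omega}}(*)$ with $D(\overline{\Omega})$ as $\mathcal{V}$-pros. The hom-objects agree on the nose: $\Xi^{\overline{\Omega}}(*)(n,m)=\coprod_{[f]\in\overline{\Omega}(n,m)}*=D(\overline{\Omega})(n,m)$. The units are $\iota_{\mathrm{id}_k}$ in both constructions. For composition, the composite defining $\Gamma$ in the proof that $\Xi^P$ exists reduces, when every factor is $*$, to the distributivity isomorphism followed by $\iota_{[g]\circ\sum[f_i]}$. This is exactly the composite used to define $D$ in the proof that $U:\mathcal{V}\mathbf{Pro}\to\mathbf{Pro}$ has a left adjoint. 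So $\Xi^{\overline{\Omega}}(*)=D(\overline{\Omega})$. Consequently, by the definition of algebras of a deformation of a pro, $\mathbf{Alg}(*,C)=\mathbf{Alg}(D(\overline{\Omega}),C)$.

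The main obstacle is that for a deformation of the \emph{controlled theory} $\Omega$ the paper defines algebras through the pushout $L_{\mathcal{O}}$ of
\[
D(\mathbf{Fr}(\mathcal{G}))\xleftarrow{D(\overline{i})}D(L_P)\xrightarrow{J^{\mathcal{O}}}L^{\mathcal{V}}_{\Xi^{\overline{\Omega}}(\mathcal{O})},
\]
not through $\Xi^{\overline{\Omega}}$ alone. I will read the statement at the level of the deformation of $\overline{\Omega}$, where the identification above is direct. For the Lawvere-theoretic version, I will attempt the following. Since $D$ is a left adjoint, $L^{\mathcal{V}}_{D(-)}\cong D(L_{(-)})$, so the pushout becomes $D$ applied to the pushout of $\mathbf{Fr}(\mathcal{G})\leftarrow L_P\to L_{\overline{\Omega}}$ in $\mathbf{Law}$. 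I will then describe algebras of this pushout using the connected-colimit theorem for Lawvere $\mathcal{V}$-theories (generalizing Theorem \ref{Alg_pre_conn_colimits}). An algebra is then a $D(\overline{\Omega})$-algebra whose underlying $P$-structure extends to a $\mathbf{Fr}(\mathcal{G})$-structure. I expect this extension comparison, and whether the extra $\mathbf{Fr}(\mathcal{G})$ generators outside the image of $P$ are genuinely constrained, to be the delicate point. If it fails in general, I would state the equivalence only with the pro-level algebras $\mathbf{Alg}(\Xi^{\overline{\Omega}}(\mathcal{O}),C)$.
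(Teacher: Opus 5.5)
Your existence and uniqueness argument is the paper's own argument, written out carefully. The paper says only that the cartesian closed structure forces existence and that the definition of strict augmentation forces uniqueness. Observing that the strict augmentation is terminal in $\mathbf{Def}^{\Omega}_{\mathcal{V}}$ is a clean way to package it. The paper gives no argument at all for the second sentence. Your identification $\Xi^{\overline{\Omega}}(*)=D(\overline{\Omega})$, checking that units and composition reduce to the composite defining $D$, is exactly what makes that sentence true.

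The weak point is the hedge at the end. You should commit to the pro-level reading, because the extension comparison you propose for $L_{\mathcal{O}}$ cannot succeed: that version of the claim is false. Take $\mathcal{G}$ to be a single binary operation $m$, $P=\mathbf{N}$ (so $i$ is trivially faithful), $L=\mathbf{Fr}(\mathcal{G})$ and $\mathbf{st}=\mathrm{id}$. Then $\overline{\Omega}=\mathbf{N}$ and $\Xi^{\overline{\Omega}}(*)=\mathbf{N}_{\mathcal{V}}$. Both $D(L_P)$ and $L^{\mathcal{V}}_{\mathbf{N}_{\mathcal{V}}}$ are initial in $\mathcal{V}\mathbf{Law}$, being images of initial objects under left adjoints, so $L_{\mathcal{O}}\cong D(\mathbf{Fr}(\mathcal{G}))$. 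Its algebras in $\underline{\mathcal{V}}$ are objects equipped with a morphism $x\times x\to x$, i.e.\ magmas in $\mathcal{V}$. Algebras of $D(\overline{\Omega})=\mathbf{N}_{\mathcal{V}}$ are bare objects. The comparison is therefore the forgetful functor from magmas, not an equivalence.

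This is precisely the failure you anticipated. Every $L_{\mathcal{O}}$-algebra restricts along $\Xi^{\Omega}(\mathcal{O}):D(\mathbf{Fr}(\mathcal{G}))\to L_{\mathcal{O}}$ to a full $\mathcal{G}$-structure, and $\overline{\Omega}$ need not see that structure. So state the second claim as $\mathbf{Alg}(\Xi^{\overline{\Omega}}(i(\mathcal{O})),C)=\mathbf{Alg}(D(\overline{\Omega}),C)$, for the underlying deformation of $\overline{\Omega}$. Your argument already proves this, and you can drop the Lawvere-level attempt.
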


\begin{proof}
The cartesian closed structure forces the structure to exist. The definition of a strict augmentation forces its uniqueness.
\end{proof}

\subsection{Omega-free deformations}
We point the reader to Construction \ref{pullback_pro} here so that the reader can reference what $\Omega^*$ is.

\begin{definition}
Let $C$ be a cartesian closed model category,
\[
\Omega=\langle \mathcal{G}:P\xrightarrow{i}\mathbf{Fr}(\mathcal{G})\xrightarrow{\mathbf{st}}L\rangle
\]
be a controlled theory, and $\mathcal{O}:=(\mathcal{O},\eta,\Gamma,W)$ be a deformation of $\Omega$. We say that $\mathcal{O}$ is \emph{$\Omega^*$-free} if given $n\geq 0$ and an invertible morphism $f\in\Omega^*(n,n)$, and if for all $k\geq 0$ and $g\in P(n,k)$ with $[g\circ f]=[g]$ in $\overline{\Omega}$, any diagram of the following form commutes precisely when $f=\text{id}_n$.

\begin{equation}\label{equat_for_freeness}
   \begin{tikzpicture}[node distance=2cm, auto]
        \node (A) {$\mathcal{O}_{[g]}$};
       \node (B)[right of=A]{$\mathcal{O}_{[g]}\times *$};
       \node (C)[right of=B,node distance=5cm]{$\mathcal{O}_{[g]}\times \mathcal{O}_{[f]}$};
       \node (D)[below of=C]{$\mathcal{O}_{[g\circ f]}$};
       \draw[->] (A) to node[above=3]{$\cong$}(B);
       \draw[->] (B) to node[above=3]{$1\times W_{f}$}(C);
       \draw[->](C) to node[right=3]{$\Gamma$}(D);
       \draw[->](A) to node[below=3]{$1$} (D);
    \end{tikzpicture} 
\end{equation}
\end{definition}

This provides an appropriate generalization for $\Sigma$-free operads in our setting.

\begin{example}
Let $\mathcal{O}$ be a $\Sigma$-free symmetric operad in a cartesian closed model category $\mathcal{V}$. Then $\mathcal{O}$ will induce a $\Omega^*$-free deformation of $\Omega_{cm}$ whose algebras are equivalent to the algebras of the operad.
\end{example}

\subsection*{The algebraic augmentation and realization}

\begin{construction}\label{constr_of_algebraic_aug}
Let $\Omega:=\langle \mathcal{G}:P\xrightarrow{i}\mathbf{Fr}(\mathcal{G})\xrightarrow{\mathbf{st}}L\rangle$ be a controlled theory. For all $n,k\geq 0$ and $[f]\in\overline{\Omega}(n,k)$, define $\mathcal{O}^\Omega_{[f]}=E([f])$ where $[f]$ is the set of operations identified under the reduction and $E$ is the right adjoint to the set of objects functor $\mathbf{ob}:\mathbf{Cat}\to\mathbf{Set}$. For all $k\geq 0$, we define a \emph{unit} functor 
\[
\eta_k^{\Omega}:*\to \mathcal{O}^\Omega_{[\text{id}_k]}
\]
that picks $\text{id}_k$. Define the \emph{composition} functor 
    \[
\Gamma^\Omega:\mathcal{O}^\Omega_{[g]}\times\prod_{i=1}^t\mathcal{O}^\Omega_{[f_i]}\to\mathcal{O}^\Omega_{[g\circ\sum_{i=1}^tf_i]},
    \]
    by utilizing the pro structure and the unique struture obtained by applying the functor $E$ for all $t\geq 1$, $n_1,\dots,n_t,m_1,\dots,m_t,k\geq 0$, $[f_i]\in\overline{\Omega}(n_i,m_i)$ for $1\leq i\leq t$, and $[g]\in\overline{\Omega}(\sum_{i=1}^tm_i,k)$. Define a choice functor $W^\Omega_f:*\to \mathcal{O}^\Omega_{[f]}$ for all $n,k\geq 0$ and $f\in P(n,k)$ such that 
    \[
    W^\Omega_{f}(*)=f
    \]
    for all $k\geq 0$.
This data satisfies the conditions of being a $\Omega^*$-free strong augmentation over $\Omega$ in $\mathbf{Cat}$. We call $\mathcal{AA}^\Omega:=(\mathcal{O}^\Omega,\Gamma^\Omega,\eta^\Omega,W^\Omega)$ the \emph{algebraic augmentation} of $\Omega$.
 \end{construction}

 This construction works equally as well for the category of groupoids since the image of $E$ lands in groupoids.

 \begin{theorem}\label{proof_of_freeness_of_alg_aug}
Let $\Omega:=\langle \mathcal{G}:P\xrightarrow{i}\mathbf{Fr}(\mathcal{G})\xrightarrow{\mathbf{st}}L\rangle$ be a controlled theory. The algebraic augmentation of $\Omega$ is a $\Omega^*$-free strong augmentation of $\Omega$.
 \end{theorem}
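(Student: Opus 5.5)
The argument has three parts: (i) $\mathcal{AA}^\Omega$ is a deformation of $\Omega$ in $\mathbf{Cat}$; (ii) it is a strong augmentation; (iii) it is $\Omega^*$-free. Everything rests on one fact about the functor $E$. For any set $S$, $E(S)$ is the indiscrete (chaotic) category on $S$, so a functor $X\to E(S)$ is the same as a function $\mathbf{ob}(X)\to S$. In particular, two functors into $E(S)$ agree as soon as they agree on objects.

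\textbf{Step (i).} I first fix the composition functor. On objects, $\Gamma^\Omega$ sends $(g',f_1',\dots,f_t')$ with $g'\in[g]$ and $f_i'\in[f_i]$ to $g'\circ\sum_i f_i'$. This lands in $[g\circ\sum_i f_i]$ because $\mathbf{st}\circ i$ is a map of pros, so it respects composition and $+$. The map on objects then extends uniquely to a functor, since $E$ is right adjoint to $\mathbf{ob}$.

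All the deformation axioms (associativity, regulation of unities, left and right unity) are equalities of functors into some $E([h])$. By the observation above, it suffices to check them on objects. On objects they are exactly associativity, unitality and interchange of composition and $+$ in the pro $P$.

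The $W$-axiom is checked the same way: $\Gamma^\Omega(W_g,W_{f_1},\dots)$ picks out $g\circ\sum_i f_i$, which is $W_{g\circ\sum f_i}$. Finally, $W_{\mathrm{id}_k}=\eta_k$ holds by definition.

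\textbf{Step (ii).} I use the canonical model structure on $\mathbf{Cat}$, or on $\mathbf{Gpd}$, where weak equivalences are the categorical equivalences. Each $E([f])$ has nonempty object set, since $f\in[f]$, and any two of its objects are uniquely isomorphic. Hence $E([f])\to *$ is an equivalence, and by two-out-of-three $W_f\colon *\to E([f])$ is a weak equivalence.

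\textbf{Step (iii).} This is the main obstacle, and it is where the faithfulness built into the definitions is needed. Fix an invertible $f\in\Omega^*(n,n)$ and $g\in P(n,k)$ with $[g\circ f]=[g]$. Diagram \eqref{equat_for_freeness} is again an equality of functors into an indiscrete category, so it commutes if and only if it commutes on objects. On objects it says $g'\circ f=g'$ for every $g'\in[g]$.

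If $f=\mathrm{id}_n$, this holds trivially. For the converse, suppose $g'\circ f=g'$ for every $g'\in[g]$; I need to conclude $f=\mathrm{id}_n$. I will try to read $f$ as an element of $P(n,n)$ through the pullback projection $\Omega^*\to P$. The element $f$ lies over a morphism of $\mathcal{F}$, that is, a reindexing of variables. Since $f$ is invertible, this morphism is a permutation, and so $f$ acts on $g$ by permuting its inputs.

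The difficulty is that a single equation $g\circ f=g$ need not force $f=\mathrm{id}_n$, because $g$ itself could be symmetric in some of its inputs. I expect to handle this as follows. First, the composites $g\circ f$ are computed in $P$, and $i\colon P\to\mathbf{Fr}(\mathcal{G})$ is faithful, so I may work inside the free theory. Second, the freeness condition quantifies over all $g\in P(n,k)$ in the class, and I intend to read it as requiring the equation for all such $g$ simultaneously. I would then choose $g$ to be the tensor of $n$ distinct projection-free terms, or more simply $g=\mathrm{id}_n$ when $k=n$. With that choice, $g\circ f=g$ in the free theory forces the underlying permutation to be trivial. Faithfulness of $\Omega^*\to\mathcal{F}$ then gives $f=\mathrm{id}_n$.

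If this reading of the quantifiers turns out to be too weak, the fallback is to argue directly from faithfulness of $i_\Omega\colon\Omega^*\to\mathcal{F}$ together with the fact that terms of a free theory have trivial stabilizers under nontrivial permutations of their variables when $g$ uses every variable.
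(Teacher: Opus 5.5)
Your steps (i) and (ii) are correct; they spell out what the paper compresses into ``by construction''. Your reduction at the start of (iii) is also right. Each $\mathcal{O}^\Omega_{[h]}=E([h])$ is indiscrete, so the freeness diagram commutes if and only if $g'\circ f=g'$ in $P$ for every $g'\in[g]$. Using faithfulness of $i$, this holds if and only if the permutation underlying $f$ fixes every variable occurring in the terms $i(g')$, $g'\in[g]$.

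The gap is in how you then force $f=\mathrm{id}_n$. You do not get to pick the test morphism. The definition requires, for \emph{each} $g\in P(n,k)$ with $[g\circ f]=[g]$, that the diagram commutes exactly when $f=\mathrm{id}_n$, and your candidates are not admissible in general. The choice $g=\mathrm{id}_n$ satisfies $[g\circ f]=[g]$ only when $\mathbf{st}(i(f))=\mathrm{id}_n$ in $L$. That fails for every nontrivial permutation as soon as the two projections $2\to 1$ remain distinct in $L$. A tensor of $n$ distinct projection-free terms need not lie in $P$, nor be invariant under $f$ modulo $L$. Re-reading the definition as one condition over all admissible $g$ at once changes it, and does not help. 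For $P=\coprod_n B\Sigma_n$, $L=\mathbf{Fr}(\mathcal{G})$ and $f$ the transposition $\tau\in\Sigma_2$, there is no admissible $g$ at all. The condition then holds vacuously although $f\neq\mathrm{id}_2$.

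In fact this step cannot be completed in the stated generality, and your own stabilizer observation shows why. It only bites when some member of $[g]$ uses a variable moved by $f$. Here is a counterexample:
\begin{itemize}
\item Take $P=\mathcal{F}$, $i=!\colon\mathcal{F}\to\mathbf{Fr}(\mathcal{G})$, $L=\mathbf{Fr}(\mathcal{G})$ and $\mathbf{st}=\mathrm{id}$.
\item Then $\Omega^*=\mathcal{F}$, every class $[h]$ is a singleton, and every $\mathcal{O}^\Omega_{[h]}$ is the terminal category, so every freeness diagram commutes.
\item Let $f\in\Omega^*(3,3)$ be the transposition of the last two variables and $g=\pi_1\colon 3\to 1$. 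Then $g\circ f=g$, so $g$ is admissible, yet $f\neq\mathrm{id}_3$.
\end{itemize}
More generally, any $g\in P(n,0)$ gives a commuting diagram for every $f$. This is because $\mathbf{Fr}(\mathcal{G})(n,0)$ is a point and $i$ is faithful, so $P(n,0)$ has at most one element.

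So an extra hypothesis is needed. One that suffices is that $i(h)$ uses all of its input variables for every morphism $h$ of $P$, as holds for $\Sigma(\mathcal{M})$. Under that hypothesis, your stabilizer argument applied to $g'=g$ itself finishes the proof. The paper's own proof does not supply this missing step either; it only asserts that the diagram commutes if and only if $f$ is the identity.
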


 \begin{proof}
     By construction, the algebraic augmentation $\mathcal{O}^\Omega$ is a strong augmentation. Let $n,k\geq 0$, $f\in\Omega^*(n,n)$ be invertible, and $g\in P(n,k)$ such that $[g\circ f]=[g]$. Notice that the diagram of \ref{equat_for_freeness} holds if and only if $f$ is the identity. Therefore $\mathcal{O}^\Omega$ is $\Omega^*$-free.
 \end{proof}

Now define a functor $\mathcal{AR}_{(-)}:\mathbf{cTh}\to 2\mathbf{Law}$ on objects by
\[
\mathcal{AR}_{\Omega}:=L_{\mathcal{O}^\Omega}.
\]
Now let 
\[
	(I,h,H):\langle \mathcal{G}:P\xrightarrow{i}\mathbf{Fr}(\mathcal{G})\xrightarrow{\mathbf{st}}L\rangle\to \langle \mathcal{G}':P'\xrightarrow{i'}\mathbf{Fr}(\mathcal{G}')\xrightarrow{\mathbf{st}'}L'\rangle
    \] 
be a map of controlled theories. To reduce writing, let $\Omega$ denote the first controlled theory, $\Omega'$ denote the second one, and $\Phi:=(I,h,H)$. For all $n,k\geq 0$ and $[f]\in\overline{\Omega}$, there is an induced functor
\[
\kappa_{[f]}:\mathcal{O}^{\Omega}_{[f]}\to \mathcal{O}^{\Omega'}_{[h(f)]}.
\]
Moreover, these functors respect the unities, the composition and the choices of operations. Therefore, we induce a map 
\[
\kappa(\Phi):\Xi^{\overline{\Omega}}(i(\mathcal{O}^{\Omega}))\to \Xi^{\overline{\Omega'}}(i(\mathcal{O}^{\Omega'}))
\]
 of $\mathbf{Cat}$-pros such that
 \[
 \kappa(\Phi)\circ I^{\mathcal{O}^\Omega}=I^{\mathcal{O}^{\Omega'}}\circ D(h).
 \]
 It follows from the universal property of the pushout, there is a unique map $\mathcal{AR}_{\Phi}:\mathcal{AR}_{\Omega}\to \mathcal{AR}_{\Omega'}$ of Lawvere $2$-theories such that the appropriate cube of Lawvere $2$-theories commutes. This provides the morphism part of our functor. We call $\mathcal{AR}_{(-)}$ the \emph{algebraic realization functor}. We may upgrade the algebraic realization functor to a functor
\[
\Xi:\mathbf{cTh}\to D\circ \mathbf{Fr}\downarrow\text{id}_{2\mathbf{Law}}
\]
defined on object by
\[
\Xi(\langle \mathcal{G}:P\xrightarrow{i}\mathbf{Fr}(\mathcal{G})\xrightarrow{\mathbf{st}}L\rangle)=(\mathcal{G}, \mathcal{AR}_{\Omega}, \Xi^\Omega(\mathcal{O}^\Omega)).
\]

Let us now explain the naming convention of algebraic augmentation/realization. Given a controlled theory $\Omega=\langle \mathcal{G}:P\xrightarrow{i}\mathbf{Fr}(\mathcal{G})\xrightarrow{\mathbf{st}}L\rangle$, there is a quotient map $\mathbf{st}:\mathcal{AR}_{\Omega}\to D(L)$ of Lawvere $2$-theories that we can view as an augmentation in the sense of \cite{SchwanzlVogt1989}.

\begin{example}
Let $\Omega_{mon}$ be the controlled theory of Example \ref{cont_theory_for_mon_gpds}. The algebraic realization of $\Omega_{mon}$ is a Lawvere $2$-theory whose $2$-category of models consists of monoidal categories, strict monoidal functors, and monoidal transformations. If we use the groupoid enrichment, we obtain monoidal groupoids.
\end{example}

\begin{example}
Let $\Omega_{cm}$ be the controlled theory of Example \ref{cont_theory_for_sym_mon_gpds}. The algebraic realization of $\Omega_{cm}$ is a Lawvere $2$-theory whose $2$-category of models consists of symmetric monoidal categories, strict monoidal functors, and monoidal transformations. If we use the groupoid enrichment, we obtain symmetric monoidal groupoids.
\end{example}

The previous two examples are due to Mac Lane in \cite{MacLane1963}. The coherence structure is completely dependent on the induced maps of pros, where $\mathbf{M}$ is the pro for monoids and $\mathbf{CM}$ is the prop for commutative monoids. 
\[
\mathbf{st}:\mathbf{N}[\mathcal{M}]\to \mathbf{M}
\]
\[
\mathbf{st}:\Sigma(\mathcal{M})\to\mathbf{CM}
\]

\begin{example}
Let $\Omega_{grp}$ be the controlled theory of Example \ref{cont_theory_for_2_grps}. The algebraic realization of $\Omega_{grp}$ is a Lawvere $2$-theory whose $2$-category of models consists of coherent group-like monoidal categories, strict monoidal functors, and monoidal transformations. If we use the groupoid enrichment, we obtain coherent $2$-groups.
\end{example}

This example follows from Theorem 10.3.33 of \cite{Parab2025}.  Also, we may recapture this example by showing that the induced augmentation $\mathbf{st}:\mathcal{AR}_{\Omega_{grp}}\to D(\mathbf{G})$ is a bi-equivalence of Lawvere $2$-theories, where $\mathbf{G}$ is the Lawvere theory for groups.

We now extend our definition of the algebraic realization for a controlled theory to a definition for the algebraic realization of a connected diagram of controlled theories. Let $J:I\to \mathbf{cTh}$ be a connected diagram of controlled theories. We define the algebraic realization of the diagram $D$ to be 
\[
\mathcal{AR}_D:=\text{colim}(\mathcal{AR}_{(-)}\circ J)
\]
This provides us a functor $\mathcal{AR}_{(-)}:\mathbf{Diag}^+(\mathbf{cTh})\to 2\mathbf{Law}$. This functor definition is important because the functor 
$\mathcal{AR}_{(-)}:\mathbf{cTh}\to 2\mathbf{Law}$
does not preserve connected colimits.

\begin{example}\label{Pic_Ex}
Let $\Omega_{pic}$ be the connected diagram of controlled theories of Example \ref{conn_diag_cont_theory_for_Pic}. The algebraic realization of $\Omega_{pic}$ is a Lawvere $2$-theory whose $2$-category of models consists of group-like symmetric monoidal categories, strict monoidal functors, and monoidal transformations. If we use the groupoid enrichment, we obtain Picard groupoids.
\end{example}

We now focus in on Example \ref{Pic_Ex} to understand why we avoid the problem of \cite{SchwanzlVogt1989} in our setting. We obtain an induced map 
\[
\mathbf{st}:\mathcal{AR}_{\Omega_{pic}}\to D(\mathbf{A}),
\]
where $\mathbf{A}$ is the Lawvere theory for abelian groups.
We now consider the following lemma.

\begin{proposition}\label{non_contract_lemma}
The groupoid $\mathbf{st}^{-1}(\text{id}_1)$ is non-contractible.
\end{proposition}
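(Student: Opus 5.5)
Since $\mathcal{AR}_{\Omega_{pic}}$ is a Lawvere $2$-theory, it is a free, strictly associative and unital model of its own $2$-theory in $\mathbf{Cat}$; via the groupoid enrichment, its object $1$ carries the structure of a Picard groupoid $\mathbb{P}$. I would prove non-contractibility by computing the automorphism group of the identity $1$-cell. Write $\mathbf{st}^{-1}(\text{id}_1)$ for the full sub-groupoid of $\mathcal{AR}_{\Omega_{pic}}(1,1)$ on the $1$-cells sent to $\text{id}_1$. It suffices to exhibit a $2$-cell $\sigma:\text{id}_1\Rightarrow\text{id}_1$ that is not the identity, since a contractible groupoid has trivial automorphism groups.

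The candidate for $\sigma$ comes from the interaction of the symmetry with the diagonal. The operations $m\circ\Delta_2$ and $m\circ\tau\circ\Delta_2$, with $\tau$ the swap in $\Sigma(\mathcal{M})$, are equal in $\mathcal{F}$ because $\tau\circ\Delta_2=\Delta_2$ there. Applying this to $x$ and $x^{-1}$, i.e. to the composite $m\circ(1+\iota)\circ\Delta_2$ with inverse $\iota$, the symmetry $\beta$ gives a $2$-cell $m\circ(1+\iota)\circ\Delta_2\Rightarrow m\circ(\iota+1)\circ\Delta_2$. Composing with the inverse-cancellation $2$-cells to and from the unit $e$ gives a $2$-cell $e\Rightarrow e$. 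Whiskering produces a loop at $\text{id}_1$ in $\mathbf{st}^{-1}(\text{id}_1)$; the simplest option is to take the symmetry $\beta_{x,x}$ at $m\circ\Delta_2$ and transport it along the unit constraints. Either way, I would take $\sigma$ to be the element of $\text{Aut}(\text{id}_1)$ built from $\beta_{x,x}$ in this manner.

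To show $\sigma\neq 1$, I would use Theorem~\ref{Alg_pre_conn_colimits} in its enriched form, Example~\ref{Pic_Ex}. These identify $2$-functors $\mathcal{AR}_{\Omega_{pic}}\to\mathbf{End}_{\mathbf{Gpd}}(X)$ with Picard groupoid structures on $X$. Take $X$ to be the $1$-truncation of the sphere spectrum, presented as a skeletal permutative Picard groupoid with objects $\mathbf{Z}$ and $\text{Aut}(n)=\mathbf{Z}/2$. Its symmetry $\beta_{1,1}$ is the generator $-1$. Under the resulting algebra map $\mathcal{AR}_{\Omega_{pic}}(1,1)\to\mathbf{Gpd}(X,X)$, the $1$-cell $\text{id}_1$ goes to $\text{id}_X$ and $\sigma$ goes to a natural automorphism of $\text{id}_X$ whose component at $1$ is $\beta_{1,1}$, conjugated by unit isomorphisms that are identities in the strict model. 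This component is nontrivial, so $\sigma\neq 1$, and therefore $\mathbf{st}^{-1}(\text{id}_1)$ has a nontrivial automorphism group and is not contractible.

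The main obstacle is the second step: the cell $\beta_{x,x}$ lives over $m\circ\Delta_2$, not over $\text{id}_1$, and it has to be moved into $\text{Aut}(\text{id}_1)$ without becoming trivial. I would first try the inverse-based construction above and evaluate it in the sphere model. There it equals $\beta_{1,1}$ times $\beta_{1,-1}$-type contributions, which in the sign model multiply to $(-1)^{1\cdot 1}\cdot(-1)^{-1}$-style signs. These signs must be checked not to cancel. If they do cancel, I would instead work with the objects $1$ and $2$ in the theory and use the $\mathbf{Z}/2$ hom-group discussed in the introduction, identified with $\pi_1$ of the relevant component.
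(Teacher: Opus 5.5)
Your strategy matches the paper's: build a non-identity $2$-cell $\text{id}_1\Rightarrow\text{id}_1$ from the symmetry and the inverse, then detect it on a model. The gap is that the proposal never fixes that $2$-cell, and constructing it is the whole content of the paper's proof.

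Your \emph{simplest option}, transporting $\beta_{x,x}$ along unit constraints, is impossible. The $1$-cells $m\circ\Delta_2$ and $\text{id}_1$ lie over $2x$ and $x$ in $D(\mathbf{A})$, so there is no $2$-cell between them at all; the inverse is needed to change fibres. The paper does this with the chain $b$, which runs from $m\circ(m+1)\circ(1+1+\iota)\circ\Delta_3$ through $m\circ(1+m)\circ(1+1+\iota)\circ\Delta_3$ and $m\circ(1+\eta\circ !)\circ\Delta_2=m\circ(1+\eta)$ to $\text{id}_1$, i.e.\ $x\cong(x\otimes x)\otimes x^{-1}$. Since $(\tau+1)\circ(1+1+\iota)\circ\Delta_3=(1+1+\iota)\circ\Delta_3$, the whiskered cell $m\circ(\beta+1)\circ(1+1+\iota)\circ\Delta_3$ is a loop at the source of $b$. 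Conjugating it by $b$ gives $t$, whose component at $x$ is $\beta_{x,x}\otimes 1_{x^{-1}}$ up to group-structure cells.

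Your inverse-based loop is also legitimate: $\eta\circ !\Rightarrow m\circ(1+\iota)\circ\Delta_2\Rightarrow m\circ(\iota+1)\circ\Delta_2\Rightarrow\eta\circ !$, whiskered into $m\circ(1+\eta\circ !)\circ\Delta_2$ and conjugated by the unit cell to $\text{id}_1$. But your evaluation of it is wrong, and that is what leaves the argument undecided. It contains exactly one symmetry cell, $\beta_{x,x^{-1}}$, and no $\beta_{x,x}$, so its component at $1$ is $\beta_{1,-1}$, not $\beta_{1,1}$. If both factors genuinely occurred they would cancel, since $(-1)^{1}\cdot(-1)^{-1}=1$; you raise this possibility but never rule it out. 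Your fallback to the $\mathbf{Z}/2$ of the introduction is circular, since that group is what is being established.

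To finish, make the model explicit. The groupoid $\mathbf{Z}\times B(\mathbf{Z}/2)$ with componentwise addition is a strict group object in groupoids, so every cell coming from $\Omega_{grp}$ or $\Omega_{mon}$ can be sent to an identity. Send the cells from $\Omega_{cm}$ to the Koszul signs built from $\beta_{a,b}=(-1)^{ab}$. These signs are trivial on $\Omega_{mon}$, so the two restrictions agree, as required for an algebra of the pushout. In this algebra both $t$ and your loop have component $(-1)^{a^2}=(-1)^a$ at the object $a$, which is nontrivial at $a=1$. This is a more explicit certificate than the paper's \emph{checking its action on all models}.
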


\begin{proof}
    The operation $m\circ (m+1)\circ (\tau+1)\circ(1+1+\iota)\circ\Delta_3$ reduces as
    \[
    m\circ (m+1)\circ (\tau+1)\circ(1+1+\iota)\circ\Delta_3=m\circ (m+1)\circ (1+1+\iota)\circ(\tau+1)\circ\Delta_3
    \]
    \[
    =m\circ (m+1)\circ (1+1+\iota)\circ\Delta_3.
    \]
    There is a chain of isomorphisms
    \[
    m\circ (m+1)\circ (1+1+\iota)\circ\Delta_3\sim m\circ (1+m)\circ (1+1+\iota)\circ\Delta_3
    \]
    \[
    \sim m\circ (1+[m\circ (1+\iota)\circ\Delta_2])\circ\Delta_2\sim m\circ(1+\eta\circ !)\circ\Delta_2
    \]
    \[
    = m\circ (1+\eta)\circ (1+!)\circ\Delta_2=m\circ(1+\eta)\sim\text{id}_1.
    \]
    Denote the composite as $b$. The chain above induces the following chain of isomorphisms.
    \[
    \text{id}_1\sim  m\circ (m+1)\circ (1+1+\iota)\circ\Delta_3\sim  m\circ (m+1)\circ(\tau+1)\circ (1+1+\iota)\circ\Delta_3
    \]
    \[
    =m\circ (m+1)\circ (1+1+\iota)\circ\Delta_3\sim\text{id}_1
    \]
    that we will label as $t$. This isomorphism $t$ is a non-trivial automorphism of $\text{id}_1$. This is verified by checking its action on all models. Therefore the groupoid $\mathbf{st}^{-1}(\text{id}_1)$ is non-contractible. 
\end{proof}
The morphism $t$ induces the map called the trace on models in \cite{Dugg,JoyalStreetVerity1996}. If we take a look at $t$, we have that $t\circ t=\text{id}_{\text{id}_1}$. Moreover, $t$ is the only non-trivial automorphism of $\text{id}_1$, so that
\[
\mathbf{st}^{-1}(\text{id}_1)(\text{id}_1,\text{id}_1)=\mathbf{Z}/2.
\]
We are forced to have that 
\[
\mathbf{st}^{-1}(\text{id}_1)(f,g)=\mathbf{Z}/2
\]
for all $f,g\in \mathbf{ob}(\mathbf{st}^{-1}(\text{id}_1))$ since $\mathbf{st}^{-1}(\text{id}_1)$ is connected. The morphism from $f$ to $g$ is determined by the notion of parity introduced by Dugger in \cite{Dugg}. In fact, we are interested in a proof of the coherence theorem of \cite{Dugg} whose argument works on the syntactic data presented here.

\subsection*{The nerve augmentation and realization}
We begin by writing $\mathbf{sSTh}$ for the category of Lawvere $\mathbf{sSet}$-theories to have clean notation. We may now define the nerve augmentation of a controlled theory as follows.
\begin{construction}
Let $\Omega:=\langle \mathcal{G}:P\xrightarrow{i}\mathbf{Fr}(\mathcal{G})\xrightarrow{\mathbf{st}}L\rangle$ be a controlled theory and $\mathcal{O}^\Omega$ be the algebraic augmentation of $\Omega$. Define
\[
\mathcal{NA}^\Omega
\]
to be the deformation of $\Omega$ in $\mathbf{sSet}$ obtained by applying the nerve functor $N:\mathbf{Cat}\to\mathbf{sSet}$ to all the structure data of the algebraic augmentation of $\Omega$. We call $\mathcal{NA}^\Omega$ the nerve augmentation of $\Omega$. This is a deformation of $\Omega$ since $N$ is a right adjoint and therefore preserves products.
\end{construction}

 \begin{theorem}
Let $\Omega:=\langle \mathcal{G}:P\xrightarrow{i}\mathbf{Fr}(\mathcal{G})\xrightarrow{\mathbf{st}}L\rangle$ be a controlled theory. The classifying augmentation of $\Omega$ is a $\Omega^*$-free strong augmentation of $\Omega$.
 \end{theorem}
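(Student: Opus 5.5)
The "classifying augmentation" in the statement is the nerve augmentation $\mathcal{NA}^\Omega$ just constructed, i.e.\ the deformation obtained by applying $N:\mathbf{Cat}\to\mathbf{sSet}$ to all the structure data of $\mathcal{AA}^\Omega$. The plan is to transport the two properties established for $\mathcal{AA}^\Omega$ in Theorem \ref{proof_of_freeness_of_alg_aug} across $N$, using only three facts about the nerve. First, $N$ preserves finite products, being a right adjoint. Second, $N$ is fully faithful. Third, $N$ sends equivalences of categories to weak equivalences of simplicial sets in the Quillen model structure.

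\emph{Deformation structure.} Since $N(*)=*$ and $N(A\times B)\cong N(A)\times N(B)$, the maps
\[
N(\eta^\Omega_k),\qquad N(\Gamma^\Omega),\qquad N(W^\Omega_f)
\]
have the correct sources and targets. The associativity, unity and $W$-compatibility diagrams are images under $N$ of commuting diagrams, so they commute. This is the content of the construction, and I would only record it.

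\emph{Strong augmentation.} Fix $n,k\geq 0$ and $f\in P(n,k)$. The category $\mathcal{O}^\Omega_{[f]}=E([f])$ is the indiscrete category on the nonempty set $[f]$, which contains $f$. The functor $W^\Omega_f:*\to E([f])$ is therefore an equivalence of categories, since every object is uniquely isomorphic to $f$. Hence $N(E([f]))$ is contractible: it is the nerve of a category with a terminal object, or equivalently the $0$-coskeleton of $[f]$, which has a simplicial contraction onto the vertex $f$. It follows that $N(W^\Omega_f):*\to N(E([f]))$ is a weak equivalence. In the same way, $\pi_0(N(W^\Omega_f))=\mathrm{id}_*$, so the lax condition holds as well.

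\emph{$\Omega^*$-freeness.} Take an invertible $f\in\Omega^*(n,n)$ and $g\in P(n,k)$ with $[g\circ f]=[g]$, and consider the triangle (\ref{equat_for_freeness}) for $\mathcal{NA}^\Omega$. This triangle is exactly $N$ applied to the corresponding triangle for $\mathcal{AA}^\Omega$, modulo the product comparison isomorphisms. Because $N$ is faithful, the nerve triangle commutes if and only if the triangle in $\mathbf{Cat}$ commutes. By Theorem \ref{proof_of_freeness_of_alg_aug}, the latter commutes precisely when $f=\mathrm{id}_n$. This gives $\Omega^*$-freeness.

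\emph{Main obstacle.} The only delicate point is identifying the nerve triangle with $N$ of the $\mathbf{Cat}$ triangle. This requires the canonical isomorphism $N(\mathcal{O}_{[g]}\times *)\cong N(\mathcal{O}_{[g]})\times *$ to be compatible with $\Gamma$, which I would check directly from the fact that $N$ preserves products strictly up to the canonical comparison maps. Everything else is formal.
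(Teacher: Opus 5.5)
Your proposal is correct and follows the same route as the paper, whose entire proof is the one-line observation that $\Omega^*$-freeness of the algebraic augmentation transfers along the nerve. You make explicit what the paper leaves implicit: faithfulness and product preservation of $N$ make the nerve freeness triangle commute exactly when the one in $\mathbf{Cat}$ does, and the contractibility of $N(E([f]))$ makes each $N(W^\Omega_f)$ a weak equivalence, which settles the strong-augmentation condition that the paper does not address.
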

 
 \begin{proof}
Since the algebraic augmentation was $\Omega^*$-free, so is the nerve augmentation.
 \end{proof}

The nerve functor preserves coproducts. Therefore we have no issue defining 
 \[
 \mathcal{NR}_{(-)}:\mathbf{cTh}\to\mathbf{sSet}\mathbf{Law}
 \]
 in an identical fashion to the algebraic realization from the algebraic augmentation. We call $\mathcal{NR}_{(-)}$ the \emph{nerve realization functor}. This further induces a functor 
 \[
 \mathcal{NR}_{(-)}:\mathbf{Diag}^+(\mathbf{cTh})\to\mathbf{sSet}\mathbf{Law}
 \]
the same way as before.

\subsection*{Applications to homotopy theory}
Before we begin this subsection, we note that the categories of algebras in this section have the projective model structure inherited from $\mathbf{sSet}$ by Theorem 7.2 of \cite{Rezk2002}. 
\begin{proposition}\label{equiv_to_A_inf_op}
Let $\Omega_{mon}$ be the controlled theory of Example \ref{cont_theory_for_mon_gpds}. There is an $A_\infty$-operad $\mathcal{A}$ together with an equivalence of categories.
\[
\mathbf{Alg}(\mathcal{NR}_{\Omega_{mon}},\underline{\mathbf{sSet}})\simeq \mathbf{Alg}(\mathcal{A})
\]
\end{proposition}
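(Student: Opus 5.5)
The plan is to identify $\mathcal{NR}_{\Omega_{mon}}$ with the free Lawvere $\mathbf{sSet}$-theory on an $\mathbf{sSet}$-pro that is itself the pro associated to a non-symmetric operad $\mathcal{A}$ with contractible components. Then algebras of $\mathcal{NR}_{\Omega_{mon}}$ in $\underline{\mathbf{sSet}}$ are algebras of that pro, which are algebras of $\mathcal{A}$. The operad will be
\[
\mathcal{A}(n)=N\bigl(E([\mu_n])\bigr),
\]
where $[\mu_n]\subseteq\mathbf{N}[\mathcal{M}](n,1)$ is the set of all $n$-ary composites of $m$ and $e$ whose image under $\mathbf{st}\circ i$ is the $n$-fold multiplication of $\mathbf{M}$. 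The first step is to check that $\overline{\Omega_{mon}}(n,1)$ is a singleton for each $n$; this holds because the image of $\mathbf{N}[\mathcal{M}]$ in $\mathbf{M}$ contains no diagonals or permutations. The operad structure on $\mathcal{A}$ is the composition $\Gamma^{\Omega}$ of Construction \ref{constr_of_algebraic_aug} after applying $N$. Since $N(E(S))$ is contractible for every nonempty set $S$, each $\mathcal{A}(n)$ is contractible. After symmetrizing ($\Sigma_n\times\mathcal{A}(n)$, which has a free $\Sigma_n$-action), $\mathcal{A}$ is an $A_\infty$-operad.

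Next, simplify the pushout defining $L_{\mathcal{NA}^{\Omega_{mon}}}$. For $\Omega_{mon}$ the control pro is $P=\mathbf{N}[\mathcal{M}]$, so $L_P=\mathbf{Fr}(\mathcal{M})$ and $\overline{i}$ is the identity. Hence $D(\overline{i})$ is an isomorphism, and the pushout
\[
D(\mathbf{Fr}(\mathcal{M}))\xleftarrow{D(\overline{i})}D(L_P)\xrightarrow{J^{\mathcal{O}}}L^{\mathbf{sSet}}_{\Xi^{\overline{\Omega}}(\mathcal{NA})}
\]
is canonically isomorphic to $L^{\mathbf{sSet}}_{\Xi^{\overline{\Omega}}(\mathcal{NA})}$. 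Since $L^{\mathbf{sSet}}_{(-)}$ is left adjoint to $U:\mathbf{sSet}\mathbf{Law}\to\mathbf{sSet}\mathbf{Pro}$ and $\mathbf{End}_{\underline{\mathbf{sSet}}}(X)$ is a Lawvere $\mathbf{sSet}$-theory, maps $L^{\mathbf{sSet}}_{Q}\to\mathbf{End}(X)$ correspond naturally to maps of $\mathbf{sSet}$-pros $Q\to\mathbf{End}(X)$. Morphisms of algebras correspond as well, because both are tested on the same commuting squares. This gives an isomorphism of categories
\[
\mathbf{Alg}(\mathcal{NR}_{\Omega_{mon}},\underline{\mathbf{sSet}})\cong\mathbf{Alg}(\mathcal{NA}^{\Omega_{mon}},\underline{\mathbf{sSet}}).
\]

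The remaining step, which I expect to be the main obstacle, is to show that the $\mathbf{sSet}$-pro $\Xi^{\overline{\Omega}}(\mathcal{NA})$ is the pro freely associated to $\mathcal{A}$. Concretely, I need
\[
\coprod_{[f]\in\overline{\Omega}(n,k)}N E([f])\;\cong\;\coprod_{n_1+\dots+n_k=n}\ \prod_{j=1}^k\mathcal{A}(n_j).
\]
The key input is a unique decomposition property in the free pro $\mathbf{N}[\mathcal{M}]$: every $f\in\mathbf{N}[\mathcal{M}](n,k)$ is uniquely a monoidal sum $f_1+\dots+f_k$ with $f_j\in\mathbf{N}[\mathcal{M}](n_j,1)$. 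From this, $[f]=\prod_j[f_j]$, and the $n_j$ are recorded by the class $[f]$. I would obtain the decomposition from the explicit construction of $\mathbf{N}[-]$ in \cite{Taylor2026}, or by induction on the length of words in generators. Since $E$ and $N$ both preserve products, the displayed isomorphism follows. One must also check that $\Gamma$ corresponds to operadic composition together with juxtaposition, which is immediate from how $\Gamma^{\Omega}$ is defined.

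Finally, an algebra of such an "operadic" pro on $X$ amounts to the maps $\mathcal{A}(n)\times X^n\to X$ for $k=1$, satisfying the operad algebra axioms. The components for general $k$ are then forced by the tensor decomposition, using the description of algebras of a deformation given in the proposition following the definition of $\Xi^P$. This yields the stated equivalence (indeed an isomorphism) with $\mathbf{Alg}(\mathcal{A})$.
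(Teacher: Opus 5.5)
Your proposal is correct and follows the same route as the paper. You use the same operad $\mathcal{A}(n)=NE([\mu_n])$ (the paper writes it as a coproduct over the singleton $\overline{\Omega_{mon}}(n,1)$), get contractibility from the chaotic categories $E([\mu_n])$, and identify $\mathcal{NR}_{\Omega_{mon}}$ with the Lawvere $\mathbf{sSet}$-theory $L_{\mathcal{A}}$ generated by $\mathcal{A}$. Your collapse of the pushout via $\overline{i}=\mathrm{id}$ and your unique-decomposition argument giving $[f]=\prod_j[f_j]$ make explicit what the paper compresses into the one-line assertion $L_{\mathcal{A}}\cong\mathcal{NR}_{\Omega_{mon}}$.
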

\begin{proof}
Define $\mathcal{A}_{n}=\coprod_{[f]\in \overline{\Omega_{mon}}(n,1)}\mathcal{N}\mathcal{O}^{\Omega_{mon}}_{[f]}$. As $\overline{\Omega_{mon}}(n,1) \cong *$, the map $W_{n}:\Delta^0\to\mathcal{A}_{n}$ is a weak equivalence for all $n\geq 0$. Therefore $\mathcal{A}$ is an $A_{\infty}$-operad. The $A_{\infty}$-operad $\mathcal{A}$ induces a Lawvere $\mathbf{sSet}$-theory $L_{\mathcal{A}}$ that is isomorphic to $\mathcal{NR}_{\Omega_{mon}}$. Therefore we induce our equivalence of categories as the composite of the following chain.
\[
\mathbf{Alg}(\mathcal{NR}_{\Omega_{mon}},\underline{\mathbf{sSet}})\cong \mathbf{Alg}(L_{\mathcal{A}},\underline{\mathbf{sSet}})\simeq\mathbf{Alg}(\mathcal{A})
\]
This completes our proof.
\end{proof}

We may upgrade our equivalence to a Quillen equivalence since the model structures on both categories of algebras agree. Therefore we face no harm in saying that the category of algebras of $\mathcal{NR}_{\Omega_{mon}}$ is the category of $A_\infty$-spaces on a homotopical and categorical level. Therefore, we write 
\[
A_\infty\text{-Spaces}:=\mathbf{Alg}(\mathcal{NR}_{\Omega_{mon}},\underline{\mathbf{sSet}}).
\]

Let
\[
\mathbf{st}:\mathcal{NR}_{\Omega_{mon}}\longrightarrow D(\mathbf{M})
\]
be the canonical morphism of theories. This morphism is a weak equivalence in
$\mathbf{sSTh}$. By Theorem~4.1 of \cite{BergerMoerdijk2007} (equivalently,
Corollary~8.6 of \cite{Rezk2002}), it induces the following Quillen equivalence.
\[
\begin{tikzcd}[column sep=large]
\mathcal{A}_{\infty}\text{-Spaces}
\arrow[r, shift left=1.2ex, "\mathbf{st}"]
&
\mathbf{sMon}
\arrow[l, shift left=1.2ex, "U"]
\end{tikzcd}
\]

We now apply the same construction to obtain a model for
$\infty$-groups. Define
\[
\mathcal{A}^{gl}_{\infty}\text{-Spaces}
:=
\mathbf{Alg}(\mathcal{NR}_{\Omega_{grp}},\underline{\mathbf{sSet}}).
\]
If $X$ is an object of $\mathcal{A}^{gl}_{\infty}\text{-Spaces}$, we call $X$ a \emph{coherent group-like $A_{\infty}$-space}. 

\begin{theorem}\label{theorem_sixpointtwenty}
The projective model structure on
$\mathcal{A}^{gl}_{\infty}\text{-Spaces}$
is a model for $\infty$-groups.
\end{theorem}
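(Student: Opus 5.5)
Mirroring the $A_\infty$ case, I will compare the strictification morphism
\[
\mathbf{st}:\mathcal{NR}_{\Omega_{grp}}\longrightarrow D(\mathbf{G})
\]
with the discrete simplicial Lawvere theory of groups. The plan is to show that it is a weak equivalence in $\mathbf{sSTh}$, that is, a levelwise weak equivalence on hom-simplicial sets. Then Theorem~4.1 of \cite{BergerMoerdijk2007} (equivalently Corollary~8.6 of \cite{Rezk2002}) gives a Quillen equivalence
\[
\mathbf{st}_!:\mathcal{A}^{gl}_{\infty}\text{-Spaces}\rightleftarrows \mathbf{Alg}(D(\mathbf{G}),\underline{\mathbf{sSet}})\cong\mathbf{sGrp}:U .
\]
Since simplicial groups with their projective model structure model $\infty$-groups (Kan loop group: $\mathbf{sGrp}$ is Quillen equivalent to reduced/pointed connected simplicial sets), composing the two Quillen equivalences proves the theorem.

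\textbf{Step 1: reduce to hom-objects.} By construction, $\mathcal{NR}_{\Omega_{grp}}$ is the pushout of $D(\mathbf{Fr}(\mathcal{G}))\leftarrow D(L_{\nabla(\mathcal{G})})\to L^{\mathbf{sSet}}_{\Xi(\mathcal{NA})}$. Since $N$ commutes with products and coproducts, each hom-object $\mathcal{NR}_{\Omega_{grp}}(n,1)$ decomposes as a coproduct, indexed by the operations $[h]\in \mathbf{G}(n,1)$, of nerves of the groupoids $\mathbf{st}^{-1}(h)\subset \mathcal{AR}_{\Omega_{grp}}(n,1)$. Because $\mathbf{st}$ preserves products, it suffices to check arities $(n,1)$. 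So I must show that each fibre $N(\mathbf{st}^{-1}(h))$ is contractible, i.e.\ that the groupoid $\mathbf{st}^{-1}(h)$ is connected with trivial automorphism groups.

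\textbf{Step 2: contractibility of fibres.} I will use the biequivalence remarked after the $\Omega_{grp}$ example: $\mathbf{st}:\mathcal{AR}_{\Omega_{grp}}\to D(\mathbf{G})$ is a biequivalence of Lawvere $2$-theories, via the coherence theorem for coherent $2$-groups (Theorem 10.3.33 of \cite{Parab2025}).
\begin{itemize}
\item \emph{Essential surjectivity on hom-categories.} This gives connectedness: any two words with the same image in $\mathbf{G}$ are joined by a chain of the generating associator, unitor and inverse $2$-cells.
\item \emph{Full faithfulness.} This gives uniqueness of $2$-cells between parallel $1$-cells, so the automorphism groups are trivial.
\end{itemize}
Hence each fibre is a contractible groupoid, and its nerve is contractible. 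This contrasts with Proposition~\ref{non_contract_lemma}, where the symmetry from $\Omega_{cm}$ produces the nontrivial loop $t$; in $\Omega_{grp}$ there is no symmetry generator $\tau$, and the diagonals in $\nabla$ are only tracked up to their image in $\mathcal{F}$.

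\textbf{Main obstacle.} The hard step is the uniqueness of $2$-cells in $\mathcal{AR}_{\Omega_{grp}}$. The danger is that the diagonal data (interaction of $\Delta_k$ with $\iota$ and $m$) creates a trace-like loop like $t$, now without $\tau$. My first attempt is to transport the coherence theorem for coherent $2$-groups: a free coherent $2$-group on a groupoid is equivalent to its strictification, and this applies to the free model on $n$ generators, whose hom-categories are exactly $\mathcal{AR}_{\Omega_{grp}}(n,1)$ by the Yoneda-type identification of free algebras. If that identification of free models with representables is delicate, the fallback is a direct rewriting argument: orient the generating $2$-cells into a terminating, confluent rewriting system on words in $\mathbf{Fr}(\mathcal{G})$, and show that the resulting critical-pair diagrams commute in $\mathcal{AR}_{\Omega_{grp}}$. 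With contractible fibres established, the rest is the formal Quillen-equivalence argument above.
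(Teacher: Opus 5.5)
Your proposal is correct and takes the paper's route. The paper also uses the hom-level weak equivalence of $\mathbf{st}:\mathcal{NR}_{\Omega_{grp}}\to D(\mathbf{G})$, Corollary~8.6 of \cite{Rezk2002} to obtain a Quillen equivalence with $\mathbf{sGrp}$, and the fact that simplicial groups model $\infty$-groups; it merely asserts the hom-level weak equivalence, which your fibrewise contractibility argument via coherence for free coherent $2$-groups actually supplies. One correction: connectedness of each fibre $\mathbf{st}^{-1}(h)$ comes from fullness of the hom-functors, not from essential surjectivity, which only gives surjectivity onto $\mathbf{G}(n,1)$; faithfulness is what gives trivial automorphism groups.
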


\begin{proof}
The canonical morphism of theories
\[
\mathbf{st}:
\mathcal{NR}_{\Omega_{grp}}
\longrightarrow
D(\mathbf{G})
\]
induces a Quillen adjunction
\[
\begin{tikzcd}[column sep=large]
\mathcal{A}^{gl}_{\infty}\text{-Spaces}
\arrow[r, shift left=1.2ex, "\mathbf{st}"]
&
\mathbf{sGrp}
\arrow[l, shift left=1.2ex, "U"]
\end{tikzcd}
\]
whose morphisms on hom-objects are weak equivalences of simplicial sets.
Corollary~8.6 of \cite{Rezk2002} therefore implies that this adjunction is a
Quillen equivalence. Since simplicial groups model connective homotopy types,
it follows that
$\mathcal{A}^{gl}_{\infty}\text{-Spaces}$
is a model category for $\infty$-groups.
\end{proof}

\begin{proposition}
Let $\Omega_{cm}$ be the controlled theory of Example \ref{cont_theory_for_sym_mon_gpds}. There is an $E_\infty$-operad $\mathcal{E}$ together with an equivalence of categories.
\[
\mathbf{Alg}(\mathcal{NR}_{\Omega_{cm}},\underline{\mathbf{sSet}})\simeq \mathbf{Alg}(\mathcal{E})
\]
\end{proposition}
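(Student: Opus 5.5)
The plan mirrors the proof of Proposition \ref{equiv_to_A_inf_op}, with the non-symmetric control pro $\mathbf{N}[\mathcal{M}]$ replaced by $\Sigma(\mathcal{M})$.

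First I would define the candidate symmetric operad by
\[
\mathcal{E}_n=\coprod_{[f]\in\overline{\Omega_{cm}}(n,1)}N\mathcal{O}^{\Omega_{cm}}_{[f]}.
\]
Since $\mathbf{CM}(n,1)$ is a singleton, and every element of $\Sigma(\mathcal{M})(n,1)$ maps to it under $\mathbf{st}\circ i$, the reduced hom-set $\overline{\Omega_{cm}}(n,1)$ is a point. Hence
\[
\mathcal{E}_n=N E\bigl(\Sigma(\mathcal{M})(n,1)\bigr),
\]
the nerve of the chaotic groupoid on the set of bracketed, permuted $n$-ary words. This nerve is contractible, so $W_n:\Delta^0\to\mathcal{E}_n$ is a weak equivalence.

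The operad structure comes from the composition $\Gamma$ of the nerve augmentation, restricted to codomain $1$. The symmetric group action comes from precomposition with the permutations in $\Sigma\subset\Sigma(\mathcal{M})$, which act on $\Sigma(\mathcal{M})(n,1)$ and hence on $NE(-)$. Associativity, unitality and equivariance should follow from the deformation axioms together with the pro structure.

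The essential point is freeness of the $\Sigma_n$-action. Precomposing a word with a nontrivial permutation $\sigma$ yields a different element of $\Sigma(\mathcal{M})(n,1)$, because $i$ is faithful. So $\Sigma_n$ acts freely on the set of objects, and therefore freely on the simplices of $NE$. This is exactly the $\Omega^*$-freeness from Theorem \ref{proof_of_freeness_of_alg_aug}, transported along $N$. Consequently $\mathcal{E}$ is $\Sigma$-free with contractible spaces, that is, an $E_\infty$-operad.

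Next I would show that $\mathcal{NR}_{\Omega_{cm}}$ is isomorphic to the Lawvere $\mathbf{sSet}$-theory $L_{\mathcal{E}}$ freely generated by $\mathcal{E}$. The pushout defining $L_{\mathcal{O}}$ glues the free theory $D(\mathbf{Fr}(\mathcal{M}))$ to $L^{\mathbf{sSet}}$ of the deformation pro along $D(L_{\Sigma(\mathcal{M})})$. Every operation of $\mathbf{Fr}(\mathcal{M})$ is a composite of diagonals, projections and words. The words are already accounted for in the deformation pro, so the pushout only adjoins the cartesian (diagonal and projection) structure. That is precisely what passing from the operad $\mathcal{E}$ to its associated Lawvere theory does.

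Concretely, I would compare universal properties. A map out of either theory into $\mathbf{End}(X)$ amounts to maps $\mathcal{E}_n\times X^n\to X$ satisfying the operad axioms and equivariance. On the $\mathcal{NR}$ side, the equivariance must come from the identification in $\mathbf{Fr}(\mathcal{M})$ of a word composed with a permutation with the corresponding permuted word. The equivalence
\[
\mathbf{Alg}(\mathcal{NR}_{\Omega_{cm}},\underline{\mathbf{sSet}})\cong\mathbf{Alg}(L_{\mathcal{E}},\underline{\mathbf{sSet}})\simeq\mathbf{Alg}(\mathcal{E})
\]
then follows as in Proposition \ref{equiv_to_A_inf_op}.

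I expect the main obstacle to be this comparison of the pushout with $L_{\mathcal{E}}$. One must check that the symmetric group action from $\Sigma(\mathcal{M})$ is compatible with the permutations already present in $\mathcal{F}$ inside $D(\mathbf{Fr}(\mathcal{M}))$. One must also check that no unexpected identifications arise between terms built with diagonals. I would attack this by giving an explicit normal form for morphisms of the pushout: an element of $\mathcal{E}$ precomposed with a map of $\mathcal{F}$. I would then verify that this normal form satisfies the pushout's universal property.
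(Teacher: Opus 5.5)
Your plan is the paper's route. The paper's proof just says to repeat the argument of Proposition~\ref{equiv_to_A_inf_op} (coproduct of nerves over a one-point reduced hom-set, $W$ a weak equivalence, $\mathcal{NR}_{\Omega_{cm}}\cong L_{\mathcal{E}}$) while tracking the symmetric group actions, and you supply those steps in more detail, including the pushout comparison the paper leaves unproved. Two justifications need repair, though both conclusions stand: $\mathbf{CM}(n,1)\cong\mathbf{N}^n$ is not a singleton, and the reduced hom-set is a point because every linear word maps to $x_1+\cdots+x_n$; and the $\Sigma_n$-action is free because a term of the free theory $\mathbf{Fr}(\mathcal{M})$ using each variable exactly once has trivial stabilizer, not because $i$ is faithful, nor is this the same as $\Omega^*$-freeness, which only says nontrivial permutations act nontrivially.
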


\begin{proof}
Repeat the proof of Proposition \ref{equiv_to_A_inf_op} making sure to account for the symmetric group actions coming from the deformation structure over the controlled theory $\Omega_{cm}$.
\end{proof}

Once again, this allows us to write 
\[
E_\infty\text{-spaces}:=\mathbf{Alg}(\mathcal{NR}_{\Omega_{cm}},\underline{\mathbf{sSet}})
\]
without any worry. Now write
\[
\mathcal{E}^{gl}_{\infty}\text{-Spaces}:=\mathbf{Alg}(\mathcal{NR}_{\Omega_{pic}},\underline{\mathbf{sSet}}).
\]
We have a pullback diagram of categories.
\[
\begin{tikzpicture}[node distance=3cm]
\node (A) {$\mathcal{E}^{gl}_{\infty}\text{-Spaces}$};
\node (B)[right of=A] {$E_\infty\text{-spaces}$};
\node (C)[below of=A]{$\mathcal{A}^{gl}_{\infty}\text{-Spaces}$};
\node (D)[right of=C]{$A_\infty\text{-Spaces}$};
\draw[->] (A) to node[above=3] {$U$} (B);
\draw[->] (A) to node[left=3] {$U$} (C);
\draw[->] (B) to node[right=3] {$U$} (D);
\draw[->] (C) to node[below=3] {$U$} (D);
\end{tikzpicture}
\]
 If $X$ is an object of $\mathcal{E}^{gl}_{\infty}\text{-Spaces}$, we call $X$ a \emph{coherent group-like $E_\infty$-space}.

The pushout that this square comes from induces a map of theories
\[
\mathbf{st}:\mathcal{NR}_{\Omega_{pic}}\to D(\mathbf{A}),
\]
where $\mathbf{A}$ is the Lawvere theory for abelian groups. We may translate the results of Proposition \ref{non_contract_lemma} from $\mathbf{Cat}$ to $\mathbf{sSet}$

\begin{theorem}\label{not_we_eq}
The morphism of theories 
\[
\mathbf{st}:\mathcal{NR}_{\Omega_{pic}}\to D(\mathcal{A})
\]
is not a weak equivalence in $\mathbf{sSTh}$.
\end{theorem}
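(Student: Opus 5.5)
The plan is to show that $\mathbf{st}$ fails to be a weak equivalence on a single hom-object. Weak equivalences in $\mathbf{sSTh}$ are the maps of Lawvere $\mathbf{sSet}$-theories that are weak equivalences of simplicial sets on every hom-object; this is the notion used in the proof of Theorem \ref{theorem_sixpointtwenty}. The target $D(\mathbf{A})$ is discrete, so $D(\mathbf{A})(1,1)$ is a coproduct of points indexed by $\mathbf{A}(1,1)$. Hence $\mathbf{st}_{1,1}$ is a weak equivalence only if each fibre $\mathbf{st}_{1,1}^{-1}(a)$ is weakly contractible. We will take $a=\text{id}_1$ and show that the fibre has nontrivial $\pi_1$.

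\emph{Step 1: identify the fibre with a nerve.} Since $N$ is a right adjoint and preserves coproducts, $\mathcal{NR}_{\Omega_{pic}}$ should be computed hom-wise by applying $N$ to $\mathcal{AR}_{\Omega_{pic}}$. Concretely, I would check three things:
\begin{itemize}
\item the nerve of each $\mathcal{O}^{\Omega}_{[f]}$ is the nerve augmentation;
\item $\Xi^{\overline{\Omega}}$ commutes with $N$;
\item the free Lawvere-theory constructions and the pushouts defining $L_{\mathcal{O}}$, together with the connected colimit over $\Gamma^b_{pic}$, are compatible with $N$ on the relevant hom-objects.
\end{itemize}
This should give $\mathcal{NR}_{\Omega_{pic}}(1,1)\cong N(\mathcal{AR}_{\Omega_{pic}}(1,1))$ over $D(\mathbf{A})(1,1)$. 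Because $N$ preserves coproducts, the fibre over $\text{id}_1$ is then $N(\mathbf{st}^{-1}(\text{id}_1))$.

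\emph{Step 2: transfer the obstruction.} By Proposition \ref{non_contract_lemma}, the groupoid $\mathbf{st}^{-1}(\text{id}_1)$ has a non-trivial automorphism $t$ of $\text{id}_1$. For any groupoid $G$ and object $x$, $\pi_1(NG,x)\cong G(x,x)$, so $\pi_1(N(\mathbf{st}^{-1}(\text{id}_1)),\text{id}_1)\neq 0$; by the remarks following Proposition \ref{non_contract_lemma} this group is $\mathbf{Z}/2$. Therefore the fibre is not weakly contractible, $\mathbf{st}_{1,1}$ is not a weak homotopy equivalence, and $\mathbf{st}$ is not a weak equivalence.

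The main obstacle is Step 1. The realization involves colimits of Lawvere theories, namely the pushout $L_{\mathcal{O}}$ and the colimit over the span $\Gamma^b_{pic}$, and $N$ does not preserve colimits in general. I would first try to argue that $N_*:2\mathbf{Law}\to\mathbf{sSTh}$ restricted to hom-wise groupoid theories agrees with the $\mathbf{sSet}$-construction. If that fails, the fallback is weaker: there is always a comparison map $\mathcal{NR}_{\Omega_{pic}}\to N_*\mathcal{AR}_{\Omega_{pic}}$ over $D(\mathbf{A})$, and it suffices to show that the loop $t$ already exists simplicially. The chain of $1$-simplices defining $t$ in Proposition \ref{non_contract_lemma} lives in $\mathcal{NR}_{\Omega_{pic}}(1,1)$ and maps to a nontrivial element of $\mathbf{Z}/2$ under the comparison, so $\pi_1$ of the source fibre surjects onto $\mathbf{Z}/2$. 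That forces non-contractibility without needing the comparison to be an isomorphism.
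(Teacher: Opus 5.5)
Your proposal is correct and follows the paper's route. The paper also proves that the fibre $\mathbf{st}^{-1}(\text{id}_1)$ of the hom-object at $(1,1)$ is not contractible using the automorphism $t$ of Proposition~\ref{non_contract_lemma}. It phrases this as a map $\partial\Delta^2\to\mathbf{st}^{-1}(\text{id}_1)$ with edges $b^{-1}$, the $\beta$-whiskering, and $b^{-1}$ that admits no filler, which is your nontrivial-$\pi_1$ argument in lifting form.

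The paper simply assumes your Step~1: it uses the composite $b^{-1}$ as a single $1$-simplex and reads non-contractibility off one failed lift, and both moves need the fibre to be the nerve of a groupoid. Your comparison-map fallback uses only the elementary whiskered $1$-simplices and functoriality of $\pi_1$, so it is the more careful version and worth keeping.
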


\begin{proof}
We will show that $\mathbf{st}^{-1}(\text{id}_1)$ is not contractible. Define $f:\partial\Delta^2\to \mathbf{st}^{-1}(\text{id}_1)$ by
\[
f_0(0)=\text{id}_1,
\]
\[
f_0(1)=f_0(2)=m\circ (m+1)\circ(1+1+\iota)\circ\Delta_3,
\]
\[
f_1(0\to 1)=(\text{id}_1\xrightarrow{b^{-1}}m\circ (m+1)\circ(1+1+\iota)\circ\Delta_3),
\]
\[
f_1(1\to 2)=(m\circ (m+1)\circ(1+1+\iota)\circ\Delta_3\xrightarrow{1_m\circ (\beta+1)\circ 1_{(1+1+\iota)\circ \Delta_3}}m\circ (m+1)\circ(1+1+\iota)\circ\Delta_3),
\]
and
\[
f_1(0\to 2)=(\text{id}_1\xrightarrow{b^{-1}}m\circ (m+1)\circ(1+1+\iota)\circ\Delta_3)
\]
where $b$ and $t$ are as they are in Proposition \ref{non_contract_lemma} and $\beta$ is the unique map from $m$ to $m\circ \tau$. The following diagram has no lift in $\mathbf{sSet}$.
\[
\begin{tikzpicture}[node distance=3cm]
\node (A) {$\partial\Delta^2$};
\node (B)[right of=A] {$\mathbf{st}^{-1}(\text{id}_1)$};
\node (C)[below of=A]{$\Delta^2$};
\node (D)[right of=C]{$*$};
\draw[->] (A) to node[above=3] {$f$} (B);
\draw[->] (A) to node[left=3] {$j_2$} (C);
\draw[->] (B) to node[right=3] {$!$} (D);
\draw[->] (C) to node[below=3] {$!$} (D);
\end{tikzpicture}
\]
Therefore $\mathbf{st}^{-1}(\text{id}_1)$ is not contractible. This means that $\mathbf{st}$ is not a weak equivalence in $\mathbf{sSTh}$.
\end{proof}

\begin{references*}
\bibitem[Ad\'{a}mek and Rosick\'{y}, 1994]{AdRo}
J.~Ad\'{a}mek and J.~Rosick\'{y},
\emph{Locally Presentable and Accessible Categories}.
Cambridge University Press, Cambridge, 1994.

\bibitem[Anderson, 1972]{And1}
D.~W. Anderson,
\emph{Fibrations and geometric realizations}.
Bull. Amer. Math. Soc. \textbf{78} (1972), no.~4, 521--525.

\bibitem[Ara, 2013]{Dim1}
D. Ara,
\emph{On the homotopy theory of Grothendieck $\infty$-groupoids}.
J. Pure Appl. Algebra \textbf{217} (2013), no.~7, 1237--1278.

\bibitem[Arkor and McDermott, 2025]{ArkorMcDermott2025RelativeMonadicity}
N.~Arkor and D.~McDermott,
\emph{Relative monadicity}.
Journal of Algebra \textbf{663} (2025), 399--434.

\bibitem[Arkor and McDermott, 2025]{ArkorMcDermott2025}
N.~Arkor and D.~McDermott,
\emph{The nerve theorem for relative monads}.
Theory and Applications of Categories \textbf{43} (2025), 403--454.

\bibitem[Baez and Williams, 2020]{BaezWilliams2020}
J.~C. Baez and C. Williams,
\emph{Enriched Lawvere Theories for Operational Semantics}.
In \emph{Proceedings of the Applied Category Theory 2019 Conference},
J.~C. Baez and B. Coecke (eds.),
Electron. Proc. Theor. Comput. Sci. \textbf{323} (2020), 106--135.

\bibitem[Barratt and Priddy, 1972]{BarrattPriddy1972}
M.~G. Barratt and S. Priddy,
\emph{On the homology of non-connected monoids and their associated groups}.
Comment. Math. Helv. \textbf{47} (1972), 1--14.

\bibitem[Badzioch, 2002]{bern1}
B. Badzioch,
\emph{Algebraic theories in homotopy theory}.
Ann. of Math. (2) \textbf{155} (2002), no.~3, 895--913.

\bibitem[Ba\v{s}i\'c and Nikolaus, 2014]{BasicNikolaus}
M. Ba\v{s}i\'c and T. Nikolaus,
\emph{Dendroidal sets as models for connective spectra}.
J. K-Theory \textbf{14} (2014), no.~3, 387--421.

\bibitem[Berger and Moerdijk, 2007]{BergerMoerdijk2007}
C. Berger and I. Moerdijk,
\emph{Resolution of Coloured Operads and Rectification of Homotopy Algebras}.
In \emph{Categories in Algebra, Geometry and Mathematical Physics},
Contemp. Math. \textbf{431}.
Amer. Math. Soc., Providence, RI, 2007, 31--58.

\bibitem[Bressie, 2020]{Bressie2020}
P.~M. Bressie,
\emph{The $\omega$-categorification of Algebraic Theories}.
Available at arXiv:2006.07191, 2020.

\bibitem[Cohen, 2009]{Cohen2009}
J.~A. Cohen,
\emph{Coherence for rewriting 2-theories}.
arXiv:0904.0125 [math.CT], 2009.

\bibitem[Dugger, 2014]{Dugg}
D. Dugger,
\emph{Coherence for invertible objects and multigraded homotopy rings}.
Algebr. Geom. Topol. \textbf{14} (2014), no.~2, 1055--1106.

\bibitem[Dwyer, Hirschhorn, and Kan, 1997]{DwyerHirschhornKan1997}
W.~G. Dwyer, P.~S. Hirschhorn, and D.~M. Kan,
\emph{Model Categories and More General Abstract Homotopy Theory}.
Unpublished manuscript, 1997.

\bibitem[Elmendorf and Mandell, 2006]{ELM2}
A.~D. Elmendorf and M.~A. Mandell,
\emph{Rings, Modules, and Algebras in Infinite Loop Space Theory}.
Adv. Math. \textbf{205} (2006), no.~1, 163--228.

\bibitem[Fong and Spivak, 2019]{FongSpivak2019}
B. Fong and D.~I. Spivak,
\emph{An Invitation to Applied Category Theory: Seven Sketches in Compositionality}.
Cambridge University Press, Cambridge, 2019.

\bibitem[Goerss and Jardine, 1999]{JarGo}
P.~G. Goerss and J.~F. Jardine,
\emph{Simplicial Homotopy Theory}.
Progress in Mathematics~174.
Birkh\"auser Verlag, Basel, 1999.

\bibitem[Goerss and Schemmerhorn, 2007]{GoerssSchemmerhorn}
P.~G. Goerss and K. Schemmerhorn,
\emph{Model Categories and Simplicial Methods}.
In \emph{Interactions Between Homotopy Theory and Algebra},
Contemp. Math. \textbf{436}.
Amer. Math. Soc., Providence, RI, 2007, 3--49.

\bibitem[Gould, 2008]{Gould2008}
M.~R. Gould,
\emph{Coherence for Categorified Operadic Theories}.
Ph.D. thesis,
University of Glasgow, 2008.

\bibitem[Gray, 1974]{Gray1974}
J.~W. Gray,
\emph{2-algebraic theories and triples}.
Cahiers de Topologie et Géométrie Différentielle \textbf{14} (1974), 178--180.

\bibitem[Grothendieck, 2022]{Gr1}
A. Grothendieck,
\emph{Pursuing Stacks (\`a la poursuite des champs). Vol.~I}.
Documents Math\'ematiques~20.
Soci\'et\'e Math\'ematique de France, Paris, 2022.

\bibitem[Gurski and Johnson, 2026]{GurskiJohnson2026}
N. Gurski and N. Johnson,
\emph{Invertibility and Parity in Symmetric Monoidal Categories}.
Appl. Categ. Structures \textbf{34} (2026), Article~34.

\bibitem[Gurski, Johnson, and Osorno, 2019]{GuJoOs}
N. Gurski, N. Johnson, and A.~M. Osorno,
\emph{The 2-dimensional stable homotopy hypothesis}.
J. Pure Appl. Algebra \textbf{223} (2019), no.~10, 4348--4383.

\bibitem[Henry and Lanari, 2023]{HenLan}
S. Henry and E. Lanari,
\emph{On the homotopy hypothesis for 3-groupoids}.
Theory Appl. Categ. \textbf{39} (2023), Paper No.~26, 735--768.

\bibitem[Hirschhorn, 2003]{Hirschhorn2003}
P.~S. Hirschhorn,
\emph{Model Categories and Their Localizations}.
Mathematical Surveys and Monographs~99.
American Mathematical Society, Providence, RI, 2003.

\bibitem[Hovey, 1998]{hov3}
M. Hovey,
\emph{Monoidal Model Categories}.
arXiv:math/9803002 [math.AT], 1998.

\bibitem[Johnson and Osorno, 2012]{OSNJ}
N. Johnson and A.~M. Osorno,
\emph{Modeling stable one-types}.
Theory Appl. Categ. \textbf{26} (2012), no.~20, 520--537.

\bibitem[Johnson and Yau, 2020]{JohnsonYau2020}
N. Johnson and D. Yau,
\emph{2-Dimensional Categories}.
Oxford University Press, Oxford, 2020.

\bibitem[Joyal, 2008]{Joyal1}
A. Joyal,
\emph{The Theory of Quasi-Categories and Its Applications}.
Lecture notes, 2008.
Available at \url{https://www.math.uchicago.edu/~may/IMA/Joyal.pdf}.

\bibitem[Joyal, Street, and Verity, 1996]{JoyalStreetVerity1996}
A. Joyal, R. Street, and D. Verity,
\emph{Traced Monoidal Categories}.
Math. Proc. Cambridge Philos. Soc. \textbf{119} (1996), no.~3, 447--468.

\bibitem[Kelly, 1982]{Kelly3}
G.~M. Kelly,
\emph{Basic Concepts of Enriched Category Theory}.
London Mathematical Society Lecture Note Series~64.
Cambridge University Press, Cambridge, 1982.

\bibitem[Kelly and Lack, 2001]{KellyLack2001}
G.~M. Kelly and S. Lack,
\emph{$V$-Cat is locally presentable or locally bounded if $V$ is so}.
Theory Appl. Categ. \textbf{8} (2001), 555--575.

\bibitem[Lanari, 2018]{LanE}
E. Lanari,
\emph{A semi-model structure for Grothendieck weak 3-groupoids}.
Preprint, 2018.

\bibitem[Lawvere, 1963]{Law2}
F.~W. Lawvere,
\emph{Functorial semantics of algebraic theories}.
Proc. Natl. Acad. Sci. USA \textbf{50} (1963), 869--872.

\bibitem[Lawvere, 2002]{Lawvere1973}
F.~W. Lawvere,
\emph{Metric Spaces, Generalized Logic, and Closed Categories}.
Repr. Theory Appl. Categ. \textbf{1} (2002), 1--37.

\bibitem[Leinster, 2004]{Lein}
T. Leinster,
\emph{Higher Operads, Higher Categories}.
London Mathematical Society Lecture Note Series~298.
Cambridge University Press, Cambridge, 2004.

\bibitem[Lurie, 2009]{Lurie2009}
J. Lurie,
\emph{Higher Topos Theory}.
Annals of Mathematics Studies~170.
Princeton University Press, Princeton, NJ, 2009.

\bibitem[Mac~Lane, 1963]{MacLane1963}
S. Mac~Lane,
\emph{Natural Associativity and Commutativity}.
Rice Univ. Stud. \textbf{49} (1963), no.~4, 28--46.

\bibitem[Maltsiniotis, 2005]{Geo2}
G. Maltsiniotis,
\emph{La th\'eorie de l'homotopie de Grothendieck}.
Ast\'erisque \textbf{301} (2005), vi+140 pp.

\bibitem[May, 1974]{Ma1}
J.~P. May,
\emph{$E_\infty$ spaces, group completions, and permutative categories}.
In \emph{New Developments in Topology},
London Mathematical Society Lecture Note Series~11.
Cambridge University Press, London, 1974, 61--94.

\bibitem[May, 1972]{May2}
J.~P. May,
\emph{The Geometry of Iterated Loop Spaces}.
Lecture Notes in Mathematics~271.
Springer-Verlag, Berlin, 1972.

\bibitem[May and Thomason, 1978]{Ma_Th}
J.~P. May and R. Thomason,
\emph{The uniqueness of infinite loop space machines}.
Topology \textbf{17} (1978), no.~3, 205--224.

\bibitem[McDermott and Uustalu, 2022]{McDermottUustalu2022}
D. McDermott and T. Uustalu,
\emph{What Makes a Strong Monad?}.
Electron. Proc. Theor. Comput. Sci. \textbf{360} (2022), 113--133.

\bibitem[Muro, 2015]{Muro2015}
F. Muro,
\emph{Dwyer--Kan homotopy theory of enriched categories}.
J. Topol. \textbf{8} (2015), no.~2, 377--413.

\bibitem[Parab, 2025]{Parab2025}
A. Parab,
\emph{Coherence and Symmetrization of Categorical Groups}.
Ph.D. thesis, The Ohio State University, Columbus, OH, 2025.

\bibitem[Patterson, 2023]{Patt1}
E. Patterson,
\emph{Unbiased monoidal categories are pseudo-elements}.
Topos Institute Blog, 2023.
Available at \url{https://topos.institute/blog/2023-08-15-unbiased-pseudomonoids/}.

\bibitem[Perutka, 2026]{Perutka2026}
T. Perutka,
\emph{$2$-Dimensional Lawvere Theories, Commutativity, and Higher Day Convolution}.
arXiv:2602.14332 [math.CT], 2026.

\bibitem[Power, 1999]{Power1999}
J.~Power,
\emph{Enriched Lawvere theories}.
Theory and Applications of Categories \textbf{6} (1999), No.~7, 83--93.

\bibitem[Quillen, 1967]{Quillen67}
D.~G. Quillen,
\emph{Homotopical Algebra}.
Lecture Notes in Mathematics~43.
Springer-Verlag, Berlin, 1967.

\bibitem[Riehl and Verity, 2020]{RiehlVerity2020}
E. Riehl and D. Verity,
\emph{Infinity Category Theory from Scratch}.
Higher Structures \textbf{4} (2020), no.~1, 115--167.

\bibitem[Rezk, 2002]{Rezk2002}
C. Rezk,
\emph{Every Homotopy Theory of Simplicial Algebras Admits a Proper Model}.
Topology Appl. \textbf{119} (2002), no.~1, 65--94.

\bibitem[Schw\"anzl and Vogt, 1989]{SchwanzlVogt1989}
R. Schw\"anzl and R.~M. Vogt,
\emph{$E_\infty$-monoids with coherent homotopy inverses are Abelian groups}.
Topology \textbf{28} (1989), no.~4, 481--484.

\bibitem[Str{\o}m, 1972]{Strom}
A. Str{\o}m,
\emph{The homotopy category is a homotopy category}.
Arch. Math. \textbf{23} (1972), 435--441.

\bibitem[Taylor, 2026]{Taylor2026}
J. Taylor,
\emph{Controlled Theories and Infinity Lawvere Theories: Weakening Axioms in the Globular Setting}.
Ph.D. thesis, Case Western Reserve University, Cleveland, OH, 2026.

\bibitem[Ugle\v{s}i\'c, 2019]{Uglesic2019}
N. Ugle\v{s}i\'c,
\emph{Enriched pro-categories and shapes}.
arXiv:1905.07181 [math.CT], 2019.

\bibitem[Uustalu, 2014]{Uus}
T. Uustalu,
\emph{Coherence for skew-monoidal categories}.
In \emph{Proceedings of the 5th Workshop on Mathematically Structured Functional Programming},
Electron. Proc. Theor. Comput. Sci. \textbf{153} (2014), 68--77.

\bibitem[Yanofsky, 2000]{yan1}
N.~S. Yanofsky,
\emph{Coherence, Homotopy and 2-Theories}.
arXiv:math/0007033 [math.CT], 2000.

\bibitem[Yau, 2016]{Yau2016}
D. Yau,
\emph{Colored Operads}.
Graduate Studies in Mathematics~170.
American Mathematical Society, Providence, RI, 2016.
\end{references*}

\end{document}